\documentclass[12pt]{amsart}
\usepackage[utf8]{inputenc}
\usepackage{a4wide}
\usepackage{lmodern}

\usepackage{hyperref}
\usepackage{dsfont}
\usepackage{stmaryrd}
\usepackage{todonotes}
\usepackage{cite}
\usepackage[capitalize]{cleveref}

\usepackage{amsmath,amsfonts,amssymb,amsthm}
\usepackage{bbm}
\setlength {\marginparwidth }{2cm}

\usepackage{pgf,tikz,pgfplots}
\pgfplotsset{compat=1.14}
\usepackage{mathrsfs}
\usetikzlibrary{arrows}

\allowdisplaybreaks

\usepackage{amsmath}
\usepackage{amssymb}
\usepackage{amsthm}
\usepackage{amsfonts}

\newcommand{\N}{\mathbb{N}}

\newcommand{\C}{\mathbb{C}}

\theoremstyle{plain}
\newtheorem{thm}{Theorem}[section]
\newtheorem{lem}[thm]{Lemma}
\newtheorem{prop}[thm]{Proposition}
\newtheorem{cor}[thm]{Corollary}
\newtheorem{dfn}[thm]{Definition}
\newtheorem*{conv}{Convention}
\newtheorem*{cond}{\bf{Condition (C)}}

\theoremstyle{definition}

\theoremstyle{remark}
\newtheorem*{rem}{Remark}

\newtheorem*{nota}{Notation}

\usepackage{xspace}
\def\Vs{\mathfrak{N}\xspace}
\def\Vsr{$\mathfrak{N}$\xspace}

\title[Random graphs with a prescribed modular decomposition]{Subgraph densities and scaling limits of random graphs with a prescribed modular decomposition}
 \author{Théo Lenoir}

\begin{document}
\maketitle

\begin{abstract}
We consider large uniform labeled random graphs in different classes
with prescribed decorations in their modular decomposition. Our main result is the estimation of the number of copies of every graph as an induced subgraph. As a consequence, we obtain the convergence of a uniform random graph in such classes to a Brownian limit object in the space of graphons.

Our proofs rely on combinatorial arguments, computing generating series using the symbolic method and deriving asymptotics using singularity
analysis.

\end{abstract}

\section{Introduction}
\subsection{Motivation}

This article lays at the interface of graph theory and combinatorial probability.

 The first motivation comes from modular decomposition, which is a standard tool in graph theory. This notion will be reviewed in a comprehensive way in \cref{sec2}. Roughly speaking, it is a decomposition of a graph into disjoint subsets of vertices called \emph{modules}. The modules can be thought of as generalizations of connected components. Since a module can contain a proper module, the modular decomposition is a recursive decomposition of a graph. It is analogous to prime factor decomposition of integers: graphs without trivial modules cannot be decomposed, and thus will be called prime for the modular decomposition.

From an algorithmic perspective, the modular decomposition can be computed in linear time \cite{linear}, making it an essential tool for quickly solving various problems such as determining belonging to several graph classes (\emph{e.g.} cographs, $P_4$-sparse graphs, permutations graphs, etc). We refer the readers to \cite{graphclasses} and \cite{modular} for more details.

The study of modular decomposition from a probabilistic point of view started very recently. The case with no prime graphs, \emph{i.e.} cographs, has been studied simultaneously in \cite{bassino2021random} and \cite{bs}. In a somewhat different context, the author \cite{lenoir} used modular decomposition to study the case of graphs with few $P_4$'s (for $k\geq 2$, $P_k$ denotes the graph consisting of a line with $k$ vertices). The aim of this article is to generalize these results: we will study the structure of large typical random graphs defined by a set of prime graphs allowed in its modular decomposition.

The second motivation is the study of classes of graphs with forbidden induced subgraphs. Indeed, our results can be interpreted in terms of forbidden induced subgraphs, as the interdiction of some prime induced subgraphs is heavily connected to the modular decomposition. Examples of classes covered by our results include those with a finite number of prime induced subgraphs allowed, or classes where all prime induced subgraphs are lines.

The asymptotic study of random graphs has been a well-established subject of research for several decades. Depending on the graph models and graph parameters of interest, several notions of limit can be considered.
\begin{itemize}
\item A first point of view consists in studying the local limit of the graph around a typical vertex. This gives information on local neighborhoods (\emph{e.g.} degrees). See \cite{remco} for more details.
\item Alternatively, we can adopt a global viewpoint, in at least two very different frameworks.
\begin{itemize}
\item We may consider the scaling limit of graphs viewed as metric spaces, by assigning distances to edges. This gives estimates of typical distances or diameters. In this context, a universal object arises: the Continuum Random Tree (CRT). For examples, refer to \cite{aldous2,goldschmidt,panagiotou}.
\item Instead, we may consider occurrences of induced subgraphs, the proper framework for which is graphons. Introduced in \cite{graphons}, graphon convergence can be seen as convergence of renormalized adjacency matrices for the so-called cut metric (a good reference on graphon theory is \cite{lovasz}). This sometimes also gives estimation on typical degrees \cite{bassino2021random}, on extremal statistics (cliques and independent sets as in \cite{indset}), or on the growth rate of classes of graphs \cite{hatami}.

\end{itemize}
\end{itemize}
The last two frameworks answer very different questions and are adapted respectively to sparse graphs (graphs with a linear number of edges) and dense graphs (graphs with a quadratic number of edges).

\subsection{Main results}

For a set of prime graphs $\mathcal{P}$, let $\mathcal{G}_{\mathcal{P}}$ be the set of graphs $G$ such that every prime subgraph in the modular decomposition of $G$ is in $\mathcal{P}$: the aim of this article is to study the structure of a uniform graph of size $n$ in $\mathcal{G}_{\mathcal{P}}$ when $n$ tends towards $ +\infty$. We only consider the case where $\mathcal{P}$ is stable by automorphism. Let $P(z)=\sum\limits_{H\in\mathcal{P}}\frac{z^{|H|}}{|H|!}$ be the exponential generating function associated to a given set $\mathcal{P}$ and let $R_0\in[0,+\infty]$ be the radius of convergence of $P$. Set $\Lambda(w):=P(\exp(w)-1)+\exp(w)-1-w$, $\Lambda$ is a power series with nonnegative coefficients. An important condition which will be crucial on $\Lambda$ is the following:

\begin{cond}\label{condi}\hspace{2cm}$R_0>0$\ and\ 
$\Lambda'\left(\log(1+R_0)\right)>1$ 
\end{cond}
\noindent (By convention, we set $\Lambda'\left(\log(1+R_0)\right)=+\infty$ if the series $\Lambda'$ diverges at $\log(1+R_0)$.) 
This condition implies that the number of graphs of size $n$ in $\mathcal{P}$ does not grow too fast, see \cref{conditionc} for more details.

Let $\mathbf{G}^{(n)}$ be a graph of size $n$ taken uniformly at random in $\mathcal{G}_{\mathcal{P}}$. For any graph $H$, denote by $\mathrm{Occ}_{H}(\mathbf{G}^{(n)})$ the number of labeled induced subgraphs of $\mathbf{G}^{(n)}$ isomorphic to $H$. Our first contribution is the estimation of the expected number of copies of $H$ in $\mathbf{G}^{(n)}$, \emph{i.e.} $\mathbb{E}[\mathrm{Occ}_{H}(\mathbf{G}^{(n)})]$. 

\begin{thm}\label{thmintro2}
Let $\mathcal{P}$ be a set of prime graphs stable by automorphism such that Condition (C) holds. Let $\mathbf{G}^{(n)}$ be a graph of size $n$ taken uniformly at random in $\mathcal{G}_{\mathcal{P}}$.

Then for every graph $H$ there exists $K_H\geq0$ such that:
$$\mathbb{E}[\mathrm{Occ}_{H}(\mathbf{G}^{(n)})]\sim K_H n^{|H|-\beta(H)}$$
where the constant $K_H$ is given in \Cref{thmprime} and $\beta(H)$ is defined in \cref{betaG}.
\end{thm}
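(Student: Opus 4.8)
The plan is to set up exponential generating functions (EGFs) tracking both the size of the graph and the number of marked vertices forming an induced copy of $H$, then extract asymptotics via singularity analysis. Concretely, I would introduce a "pointed" class: for each graph $H$, consider the bivariate EGF $C_H(z,u)$ counting graphs in $\mathcal{G}_{\mathcal{P}}$ with a distinguished (ordered or unordered) set of $|H|$ vertices inducing a subgraph isomorphic to $H$, where $z$ marks total size and $u$ marks the distinguished vertices. Then $\mathbb{E}[\mathrm{Occ}_H(\mathbf{G}^{(n)})]$ is, up to the normalization by $|\mathcal{G}_{\mathcal{P}}|_n \cdot n!$ (the number of labeled graphs of size $n$), the coefficient $[z^n] \partial_u^{|H|} C_H(z,u)|_{u=\text{appropriate}}$, or more simply $[z^n]$ of the EGF obtained by summing over all placements of the copy of $H$.

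**The combinatorial heart** is to understand how an induced copy of $H$ interacts with the modular decomposition tree of a graph in $\mathcal{G}_{\mathcal{P}}$. The standard dictionary says a graph decomposes at its root as either a disjoint union (series/parallel node) or via a prime quotient $Q \in \mathcal{P}$ with modules substituted into its vertices; the EGF $C(z)$ for $\mathcal{G}_{\mathcal{P}}$ satisfies a functional equation reflecting $\Lambda$, and Condition (C) is exactly what guarantees a square-root singularity for $C$ (a Brownian-type, i.e. "subcritical tree-like", behavior) at some $\rho>0$. To count copies of $H$, I would argue recursively: the $|H|$ marked vertices are distributed among the children of the root node; if they all fall into a single child module we recurse, otherwise they "split" at the root and the way they split is constrained — either $H$ itself must be decomposable compatibly with the root (a join or union of the induced pieces), or the marked vertices hit several modules of a prime node $Q$ and the induced graph on them is a blow-up pattern inside $Q$. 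This yields a system of equations for the $C_H(z,u)$ in terms of $C$ and lower-order pointed series. The exponent $\beta(H)$ defined in \cref{betaG} should be precisely the number of "independent splittings" one can perform — equivalently the number of extra square-root singular factors (each of order $n^{-1/2}$) picked up, so that $n^{|H|}$ from choosing the vertices freely gets reduced to $n^{|H|-\beta(H)}$; and the recursion should make it manifest that placing marked vertices so as to maximize splittings dominates.

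**The asymptotic extraction step** then runs as follows: having expressed the relevant generating function as a finite combination of products of $C(z)$, its derivatives, and the substituted primes $P$, all evaluated near the singularity $\rho$, I would use the square-root nature of the singularity of $C$ (from Condition (C)) together with standard transfer theorems (singularity analysis à la Flajolet–Odlyzko) to read off that the dominant term behaves like $c \cdot (1-z/\rho)^{\text{(half-integer power)}}$, giving $\mathbb{E}[\mathrm{Occ}_H(\mathbf{G}^{(n)})] \sim K_H n^{|H|-\beta(H)}$ after dividing by the asymptotics of $|\mathcal{G}_{\mathcal{P}}|_n$, itself of the form $a \rho^{-n} n^{-3/2}$. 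The constant $K_H$ should assemble from the singular coefficients of each factor in the optimal splitting configuration, matching the formula in \Cref{thmprime}.

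**The main obstacle** I anticipate is the bookkeeping of the recursion over the modular decomposition when the marked vertices split across a prime node $Q\in\mathcal{P}$: one must sum over all sub-patterns of $H$ realizable as induced subgraphs of blow-ups of graphs in $\mathcal{P}$, control that this sum converges (using Condition (C) to tame the growth of $\mathcal{P}$), and correctly identify which configuration contributes the leading power of $n$ — i.e. proving that $\beta(H)$ as defined really is the maximum over all valid splitting schemes. A secondary technical difficulty is handling automorphisms of $H$ and of the prime quotients correctly so that the labeled counts (and hence $K_H$) come out with the right rational factors; this is where stability of $\mathcal{P}$ under automorphism is used.
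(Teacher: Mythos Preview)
Your high-level strategy --- encode copies of $H$ via marked graphs, translate to modular decomposition trees, express the relevant EGF as a combination of the base series $T$ and its blossomed variants, then apply singularity analysis using the square-root singularity guaranteed by Condition (C) --- is exactly the paper's approach. The paper organizes the combinatorics slightly differently: rather than a recursion on how the marks split at the root, it fixes the entire \emph{shape} of the induced subtree $\tau$ (a substitution tree with $\mathrm{Graph}(\tau)=H$) and proves a product formula (\cref{thmcoeur}) for the EGF $T_\tau$ of trees in $\mathcal{T}_{\mathcal{P}}$ whose marked leaves induce $\tau$. Each edge of $\tau$ contributes one factor of $T^{\oplus}\sim (1-z/R)^{-1/2}$, so $T_\tau$ has singularity of order $(1-z/R)^{-(e+1)/2}$ with $e$ the number of edges of $\tau$; summing over all substitution trees of $H$, the dominant terms come from those $\tau$ with the \emph{most} edges, namely the expanded trees of \cref{exp}, which have $2|H|-2-2\beta(H)$ edges.

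There is one point where your intuition is inverted and would cause trouble if you tried to execute the argument. You write that $\beta(H)$ counts the number of ``independent splittings'' and that each contributes a factor of $n^{-1/2}$, reducing $n^{|H|}$ to $n^{|H|-\beta(H)}$. In fact each edge of the induced subtree contributes a factor $(1-z/R)^{-1/2}$, hence $n^{+1/2}$ after transfer: more splittings (more edges in $\tau$) give a \emph{higher} power of $n$, not lower. The quantity $\beta(H)=\tfrac12\sum_i(e_i-2)$ summed over prime nodes of the modular decomposition of $H$ is not a count of splittings at all; it is the deficit between the maximum possible number of edges ($2|H|-2$, achieved when $H$ is a cograph) and the actual maximum $2|H|-2-2\beta(H)$ over substitution trees of $H$. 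The prime nodes are rigid --- they cannot be further inflated into binary subtrees --- and each prime node of arity $e_i$ costs $e_i-2$ edges relative to a binary expansion. So the configuration that dominates is indeed the maximally-split one, but $\beta(H)$ measures the obstruction to splitting imposed by the prime structure of $H$, not the amount of splitting itself. Getting this backwards would lead you to look for the wrong extremal configuration.
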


Let us briefly comment on the theorem.
\begin{itemize}
\item The quantity $\beta(H)$ is nonnegative, does not depend on $\mathcal{P}$, and is equal to $0$ if and only if $H$ is a cograph. Thus at fixed size, the graphs $H$ that appear the most as induced subgraphs of $\mathbf{G}^{(n)}$ are the cographs. Informally, the quantity $\beta(H)$ can be thought of how far $H$ is from cographs.

\item We have that $K_H>0$ if and only if each prime subgraph in the modular decomposition of $H$ is an induced subgraph of an element of $\mathcal{P}$. This criterion will follow from the expression of $K_H$ given in \Cref{thmprime}.

\item Applying \cref{thmintro2} with the graph $H$ consisting of a single edge, we get that $\mathbb{E}[\#\text{edges}\ \text{of}\ \mathbf{G}^{(n)}]= \Theta(n^2)$: thus $\mathbf{G}^{(n)}$ is in the dense regime.

\end{itemize}

\Cref{thmintro2} has an important consequence in terms of scaling limit: since we are in the dense regime, we use the framework of graphons. Graphon convergence is equivalent to the joint convergence of subgraphs density. More formally, Diaconis and Janson provided in \cite{janson} a criterion for the convergence of random graphs in the sense of graphons: the convergence of a family $(\mathbf{H}^{(n)})_{n\geq 1}$ of random graphs is characterized by the convergence in distribution of $\frac{\mathrm{Occ}_{H}(\mathbf{H}^{(n)})}{n^{|H|}}$ for every finite graph $H$. Therefore \Cref{thmintro2} ensures that $\mathbf{G}^{(n)}$ has a limit in the sense of graphons. All the needed materials for the convergence of graphons will be presented in \cref{graphon}.

For the case of cographs (\emph{i.e.} $\mathcal{P}=\varnothing$), studied simultaneously in \cite{bassino2021random} and \cite{bs}, the authors exhibit a Brownian limit object for a uniform cograph, called the Brownian cographon, which can be explicitly constructed from the Brownian excursion.

Our second contribution is that, for all sets $\mathcal{P}$ satisfying Condition (C), the scaling limit is a one parameter deformation of the Brownian cographon. This answers partially a question in \cite{bassino2021random}.

\begin{thm}\label{cgbintro}
Let $\mathbf{G}^{(n)}$ be a graph of size $n$ taken uniformly at random in $\mathcal{G}_{\mathcal{P}}$. Under Condition (C), $\mathbf{G}^{(n)}$ converges in distribution towards $\mathbf{W}^{p}$ in the sense of graphons, where $\mathbf{W}^{p}$ is the Brownian cographon of parameter $p$, and $p$ is a parameter in $[0,1]$ depending on $\mathcal{P}$ whose expression is given in \cref{parameterp}.

\end{thm}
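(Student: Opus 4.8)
\textbf{Proof proposal for \Cref{cgbintro}.}

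The plan is to deduce the graphon convergence from the subgraph-density estimates of \Cref{thmintro2} via the Diaconis--Janson criterion, and then to identify the limit with the Brownian cographon $\mathbf{W}^p$ by matching expected subgraph densities. First I would recall that, by \cite{janson}, it suffices to show that for every finite graph $H$ the rescaled count $\mathrm{Occ}_H(\mathbf{G}^{(n)})/n^{|H|}$ converges in distribution (equivalently, in this dense setting, that all joint moments converge). Since \Cref{thmintro2} only gives the first moment, the first step is to upgrade it to a concentration/moment statement: the random variable $\mathrm{Occ}_H(\mathbf{G}^{(n)})/n^{|H|}$ converges in probability to a \emph{constant} when $\beta(H)=0$ (i.e.\ $H$ a cograph), and to $0$ when $\beta(H)>0$. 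Concretely, one computes $\mathbb{E}[\mathrm{Occ}_H(\mathbf{G}^{(n)})^2]$ — this is $\mathbb{E}[\mathrm{Occ}_{H'}(\mathbf{G}^{(n)})]$ summed over all graphs $H'$ obtained by gluing two copies of $H$ along a common (possibly empty) subset of vertices — and applies \Cref{thmintro2} to each $H'$. The dominant terms come from disjoint unions ($H' = H \sqcup H$, with $2|H|$ vertices and $\beta(H')=2\beta(H)$), which gives exactly $\mathbb{E}[\mathrm{Occ}_H]^2$ to leading order, so $\mathrm{Var}(\mathrm{Occ}_H(\mathbf{G}^{(n)})/n^{|H|}) \to 0$ provided all overlapping gluings are of strictly lower order; the latter should follow from superadditivity of $\beta$ under identification of vertices, which I would check from the definition in \cref{betaG}. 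By Chebyshev this yields convergence in probability to $K_H$ (when $\beta(H)=0$) or to $0$ (when $\beta(H)>0$), hence a deterministic graphon limit $W$ with $t(H,W) = \lim \mathbb{E}[\mathrm{Occ}_H(\mathbf{G}^{(n)})]/n^{|H|}$ for all $H$.

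Next I would identify this limiting functional with that of the Brownian cographon. The Brownian cographon $\mathbf{W}^p$ is the (random) graphon built, as in \cite{bassino2021random,bs}, from a Brownian excursion with i.i.d.\ "series/parallel" signs biased by $p$; its subgraph densities are known: $\mathbb{E}[t(H,\mathbf{W}^p)]$ is nonzero precisely when $H$ is a cograph, and for a cograph $H$ it is given by an explicit sum over the cotree of $H$, polynomial in $p$ and $1-p$, coming from the mass split at series vs.\ parallel nodes. So the task reduces to showing that for every cograph $H$, $$\lim_{n\to\infty}\frac{\mathbb{E}[\mathrm{Occ}_H(\mathbf{G}^{(n)})]}{n^{|H|}} \;=\; \mathbb{E}\big[t(H,\mathbf{W}^p)\big],$$ with $p$ the parameter from \cref{parameterp}. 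I would do this by comparing the combinatorial expression for $K_H$ given in \Cref{thmprime} (when $\beta(H)=0$) with the cotree formula for $\mathbb{E}[t(H,\mathbf{W}^p)]$: both are sums over the recursive (series--parallel) structure of the cograph $H$, and the weight $p$ should emerge as the asymptotic proportion of "parallel" (disjoint-union) internal nodes versus "series" (join) internal nodes along the modular decomposition tree of $\mathbf{G}^{(n)}$ — a quantity governed by the singularity analysis underlying \Cref{thmintro2}. Matching the two formulas term by term (or, more robustly, checking they satisfy the same recursion over the cotree) pins down $p$ and completes the identification. Since a graphon is determined by its subgraph densities, $W = \mathbf{W}^p$ in distribution as graphons, and the convergence $\mathbf{G}^{(n)} \to \mathbf{W}^p$ follows.

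I expect the main obstacle to be the second-moment (concentration) step: one must control $\mathbb{E}[\mathrm{Occ}_H(\mathbf{G}^{(n)})^2]$ uniformly over all ways of overlapping two copies of $H$, which requires a quantitative version of the super-additivity of $\beta$ — namely that identifying a nonempty set of vertices of two graphs strictly increases $\beta$ unless it corresponds to a "free" disjoint placement. This is a purely combinatorial property of $\beta$ from \cref{betaG}, but it has to be argued carefully for all gluings simultaneously, and the error terms in the asymptotics of \Cref{thmintro2} must be summable over the (finitely many, for fixed $H$) glued graphs $H'$. A secondary, more bookkeeping-type difficulty is matching the closed form of $K_H$ with the cotree expansion of $\mathbb{E}[t(H,\mathbf{W}^p)]$ and thereby reading off the explicit value of $p$; I anticipate that the cleanest route is to show both families of constants satisfy the same recursion indexed by the modular/cotree decomposition of $H$, rather than expanding each side explicitly.
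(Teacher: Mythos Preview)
Your overall strategy (Diaconis--Janson plus matching densities) is a legitimate alternative to the paper's route, but the concentration step you single out as ``the main obstacle'' is both unnecessary and, as sketched, circular. Criterion (c) of \Cref{Portmanteau} only asks that the \emph{first} moments $\mathbb{E}[\mathrm{Occ}_H(\mathbf{G}^{(n)})]/n^{|H|}$ converge to $\mathbb{P}(\mathrm{Sample}_{|H|}(\mathbf{W}^p)=H)$; no variance bound is required for graphon convergence. Your proposed second-moment computation does not close anyway: when you expand $\mathbb{E}[\mathrm{Occ}_H^2]$ over pairs $(\mathfrak{I}_1,\mathfrak{I}_2)$, the ``disjoint'' contribution is not a single graph $H\sqcup H$ but a sum over \emph{all} graphs $H'$ on $2|H|$ vertices containing two vertex-disjoint induced copies of $H$ with arbitrary cross-edges, and you must then show that the leading constants of these terms sum to exactly $K_H^2$. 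That identity is precisely the statement that the limiting graphon has almost-surely constant $H$-density --- which is what concentration is meant to prove --- so the argument eats its own tail. Your conclusion ``hence a deterministic graphon limit $W$'' is therefore unjustified (and conceptually off: $\mathbf{W}^p$ is a priori a random graphon).

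The paper sidesteps all of this by working at the tree level rather than the graph level. Instead of matching the constants $K_H$ against $\mathbb{P}(\mathrm{Sample}_{|H|}(\mathbf{W}^p)=H)$ --- which would force you to sum over all binary cotrees of each cograph $H$ --- it computes directly the asymptotic law of the subtree of the modular-decomposition tree induced by $\ell$ uniform marked leaves (\Cref{110}): this subtree is asymptotically a uniform labeled binary tree with $\ell$ leaves and i.i.d.\ $p$-biased $\oplus/\ominus$ decorations, i.e.\ exactly $\mathbf{b}^p_\ell$. Since $\mathrm{Sample}_\ell(\mathbf{W}^p)\overset{(d)}{=}\mathrm{Graph}(\mathbf{b}^p_\ell)$ by definition, \Cref{111} (criterion (d) lifted to trees) gives the convergence in one line. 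Your route can be salvaged if you drop the concentration step entirely and instead verify $K_H=\mathbb{P}(\mathrm{Sample}_{|H|}(\mathbf{W}^p)=H)$ for every cograph $H$ directly from the formula in \Cref{thmprime} combined with the expanded-tree count of \Cref{exp}; but that computation is effectively reassembling the paper's tree-level statement from its graph-level projection, and is strictly more bookkeeping.
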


This theorem shows $\mathbf{W}^{p}$ for various $p\neq 1/2$ as a scaling limit of natural combinatorial objects.

\subsection{Discussion on Condition (C)}\label{conditionc}

First observe that Condition (C) is verified in many situations, for example when:

\begin{itemize}
    \item $\mathcal{P}$ is finite, \emph{i.e.} $P$ is a polynomial;
    \item $P$ has a radius of convergence greater than $1$;
    \item  $P'$ diverges in $R_0$;
    \item the number of graphs in $\mathcal{P}_n$ grows as $Bn!R^nn^{a}$ with $B,R>0$ and $a\geq -2$;
    \item $P$ is rational.
\end{itemize}

As we will see in \cref{cor1}, Condition (C) implies that the graphs in $\mathcal{G}_{\mathcal{P}}$ all share the universal exponent $-3/2$ in the sense that the number of graphs of size $n$ in $\mathcal{G}_{\mathcal{P}}$ is asymptotically equivalent to 
$$D \frac{n!}{R^nn^{3/2}},$$
with $D>0$ and $R>0$ explicit constants depending on $\mathcal{P}$, as proved in \cref{cor1}. All the details on the exponential generating functions and asymptotics will be given in \cref{sec4}.

Let us illustrate the connection between \cref{cgbintro} and our initial motivation for graphs with forbidden induced subgraphs.
\begin{itemize}
    \item The case where all but finitely many prime induced subgraphs are forbidden corresponds to a family of graphs $\mathcal{G}_{\mathcal{P}}$ where $\mathcal{P}$ is finite thus Condition (C) is clearly verified.
    \item The case where all prime subgraphs except lines are forbidden corresponds to the family of graphs $\mathcal{G}_{\mathcal{P}}$ where $\mathcal{P}$ is the set of lines of size at least $4$. Thus $P=\frac{z^4}{2(1-z)}$ and Condition (C) is verified. The parameter $p$ of \cref{cgbintro} is approximately $0.288$.
\end{itemize}

This work can be seen as the counterpart for graphs to the study of limits of substitution-closed permutations in \cite{permuton}. However multiple additional difficulties (graph automorphisms, encoding of graphs in $\mathcal{G}_{\mathcal{P}}$ by \emph{non}-plane trees through the modular decomposition) arise, making the enumeration of $\mathcal{G}_{\mathcal{P}}$ more involved. 

Condition (C) is necessary in \cref{cgbintro} in the sense that if it is not verified, other behaviors can appear: under other general assumptions on the generating series, we can get a very different scaling limit where each induced subgraph may appear in the limiting object. In particular, $\mathbb{G}_n$ does not converge to the Brownian cographon.

\subsection{Proof strategy}

Proofs are essentially combinatorial: we use the tree encoding of the modular decomposition to obtain exact enumerations for a large family of graph classes. We exploit those enumerative results with tools from analytic combinatorics to get asymptotic estimates like the ones of the number of graphs of size $n$ in each class.

The most challenging part of the proofs is the one of \cref{thmcoeur}, giving the combinatorial decomposition of graphs in $\mathcal{G}_{\mathcal{P}}$ with a given induced subgraph. This theorem is the key theorem to get the needed asymptotic estimates.

\subsection{Outline of the paper}

\begin{itemize}
    \item In \Cref{sec2} we define the encoding of graphs with trees, the modular decomposition, which is used throughout the different proofs.
    \item  \Cref{graphon} presents the necessary material on graphons. 
    \item \Cref{sec4,sec5} are about calculating generating series related to our graph classes: in \Cref{sec4} we compute several generating series, and obtain the asymptotic of the number of graphs of size $n$ under Condition (C) and \Cref{sec5} deals with the generating series of graphs with a given induced subgraph.
    \item \Cref{lastsection} concludes the proofs of \Cref{cgbintro} and \Cref{thmintro2}.
\end{itemize}

\section{Background on modular decomposition}\label{sec2}
This section is composed of classical results around modular decomposition. In the last subsection, we prove an enumerative result that will be useful at the end of the paper. 
\subsection{Labeled graphs}
In the following all the graphs considered are simple and finite.  Each time a graph $G$ is defined, we denote by $V$ its set of vertices and $E$ its set of edges. Whenever there is an ambiguity, we denote by $V_G$ (resp.~$E_G$) the set of vertices (resp.~edges) of $G$.

\begin{dfn}

We say that $G=(V,E)$ is a \emph{weakly-labeled graph} if every element of $V$ has a distinct label in $\N$ and that $G=(V,E)$ is a \emph{labeled graph} if every element of $V$ has a distinct label in $\{1,\dots, |V|\}$. 

The \emph{size of a graph} $G$, denoted by $|G|$, is its number of vertices. 

The \emph{minimum of a graph} $G$, denoted $\mathrm{min}(G)$, is the minimal label of its vertices.

\end{dfn}

In the following, every graph is labeled, otherwise we mention explicitly that the graph is weakly-labeled.

\begin{rem}
We do not identify a vertex with its label. A vertex of label $i$ is denoted $v_i$. The label of a vertex $v$ is denoted $\ell(v)$.
\end{rem}

\begin{dfn}\label{red}
For any weakly-labeled object (graph or tree) of size $n$, we call \emph{reduction} the operation that reduces its labels to the set $\{1,\dots,n\}$ while preserving the relative order of the labels.
\end{dfn}

For example if $G$ labels $2,4,12,63$ then the reduced version of $G$ is a copy of $G$ in which  $2, 4, 12, 63$ are respectively replaced by $1,2,3,4$.

\begin{dfn}
Let $G$ be a graph and $\pi$ be a permutation of $\{1,\dots,|G|\}$. The \emph{$\pi$-relabeling} of $G$ is the graph $G'$ such that: 
\begin{itemize}
    \item $V_{G'}=V_G$
    \item for every vertex $v$ in $V_{G'}$, we replace the label of the leaf $v$ by $\pi(\ell(v))$. 
    
\end{itemize}

We write $G\sim G'$ if $|G|=|G'|$ and there exists a permutation $\pi$ of $\{1,\dots, |G|\}$ such that $G$ is isomorphic to the $\pi$-relabeling of $G'$.

\end{dfn}
Note that $\sim$ is an equivalence relation.

\begin{dfn}[Induced subgraph]\label{d12}
Let $G$ be a graph, $k$ a positive integer and $\mathfrak{I}$ a partial injection from the set of labels of $G$ to $\N$. The \emph{labeled subgraph $G_\mathfrak{I}$} of $G$ induced by $\mathfrak{I}$ is defined as:
\begin{itemize}
    \item The vertices of $G_\mathfrak{I}$ are the vertices of $G$ whose label $\ell$ is in the domain of $\mathfrak{I}$. For every such vertex, we replace the label $\ell$ of the vertex by $\mathfrak{I}(\ell)$;
    \item For two vertices $v$ and $v'$ of $G_\mathfrak{I}$, $(v,v')$ is an edge of $G_\mathfrak{I}$ if and only if it is an edge of $G$.
\end{itemize}
\end{dfn}

\begin{dfn}
For every pair of graphs $(G,H)$, let \emph{$\mathrm{Occ}_{G}(H)$} be the number of partial injection $\mathfrak{I}$ from the vertex labels of $H$ to $\N$ such that $H_\mathfrak{I}$ is isomorphic to $G$.

\end{dfn}

\begin{dfn}
Let $k$ be a nonnegative integer We say that $G$ is a \emph{graph with $k$ blossom} if, for every $j\in\{1,\dots, k\}$,  exactly one vertex of $G$ is labeled $*_j$, and the remaining vertices have a distinct label in $\{1,\dots,|V|-k\}$. 

\end{dfn}

\begin{rem}
For $k=0$ a graph with $0$ blossom is simply a graph.
\end{rem}

\begin{dfn}\label{defblo}

Let $G$ be a graph with $k$ blossoms and $v$ a vertex of $G$ which is not a blossom. We define \emph{$\mathrm{blo}_{v}(G)$} to be the labeled graph obtained after the following transformations:
\begin{itemize}

    \item $v$ is now labeled $*_{k+1}$;
    \item the graph obtained is replaced by its reduction as defined in \Cref{red}.

\end{itemize}
\end{dfn}

\subsection{Encoding graphs with trees}

A key construction to make the modular decomposition effective is the graph substitution (also called substitution-composition).

\begin{dfn}[Graph substitution]\label{2.3}
Let $G$ be a graph of size $n$ and $H_1,\dots, H_{n}$ be weakly-labeled graphs such the vertices of $H_1,\dots,H_n$ have mutually distinct labels. The graph \emph{$G[H_1,\dots, H_{n}]=(V,E)$} is the graph whose set of vertices is $V=\bigcup_{i=1}^n V_{H_i}$ and such that:
\begin{itemize}
\item for every $i\in \{1,\dots,n\}$ and every pair $(v,v')\in V_{H_i}^2$, $\{v,v'\}\in E$ if and only if $\{v,v'\}\in E_{H_i}$;
\item For every $(i,j)\in \{1,\dots,n\}$ with $i\neq j$, and every pair $(v,v')\in V_{H_i}\times V_{H_j}$, $\{v,v'\}\in E$ if and only if $\{v_i,v_j\}\in E_G$.
\end{itemize}
\end{dfn}

\begin{nota}

In the following we use the shortcut $\oplus$ for the complete graph of size $n$. Thus $\oplus[H_1,\dots ,H_n]$ is the graph obtained from copies of $H_1,\dots,H_n$ in which for every $i\neq j$ every vertex of $H_i$ is connected to every vertex of $H_j$. This graph is called the \emph{join} of $H_1,\dots,H_n$ 

We use the shortcut $\ominus$ for the graph with no edge of size $n$. Thus $\ominus[H_1,\dots ,H_n]$ is the graph given by the disjoint union of $H_1,\dots,H_n$ This graph is called the \emph{union} of $H_1,\dots,H_n$.

\end{nota}

This construction allows us to transform non-plane labeled trees with internal nodes decorated with graphs, $\oplus$ and $\ominus$ into graphs.

\begin{dfn}\label{d3}
Let $\mathcal{T}_0$ be the set of rooted non-plane trees whose leaves have distinct labels in $\N$ and whose internal nodes carry decorations satisfying the following constraints: 
\begin{itemize}
    \item internal nodes are decorated with $\oplus$, $\ominus$ or a graph;
    \item If a node is decorated with some graph $G$ then $|G|\geq 2$ and this node has $|G|$ children. If a node is
decorated with $\oplus$ or $\ominus$ then it has at least 2 children.
   
\end{itemize}

A tree $t\in \mathcal{T}_0$ is called a \emph{substitution tree} if the labels of its leaves are in $\{1,\dots, |t|\}$.

\end{dfn}

We call \emph{linear} the internal nodes decorated with $\oplus$ or $\ominus$ and \emph{non-linear} the other ones.

\begin{nota}
For a non-plane rooted tree $t$, and an internal node $\mathfrak{n}$ of $t$, let $t_{\mathfrak{n}}$ be the multiset of trees attached to $\mathfrak{n}$ and let $t[\mathfrak{n}]$ be the non-plane tree rooted at $\mathfrak{n}$ containing only the descendants of $\mathfrak{n}$ in $t$. 
\end{nota}

\begin{conv}

We only consider non-plane trees. However it is sometimes convenient to order the subtrees of a given node. The convention is that for some $\mathfrak{n}$ in a tree $t$ the trees of $t_{\mathfrak{n}}$ are ordered according to their minimal leaf labels.

\end{conv}

\begin{dfn}\label{d2}
Let $t$ be an element of $\mathcal{T}_0$, the weakly-labeled graph \emph{$\mathrm{Graph}(t)$} is inductively defined as follows: 
\begin{itemize}
    \item if $t$ is reduced to a single leaf labeled $j$, $\mathrm{Graph}(t)$ is the graph reduced to a single vertex labeled $j$;
    \item otherwise, the root $r$ of $t$ is decorated with a graph $H$, and $$\mathrm{Graph}(t)=H[\mathrm{Graph}(t_1),\dots, \mathrm{Graph}(t_{|H|})]$$
    where $t_i$ is the $i$-th tree of $t_r$.
\end{itemize}

If $G$ is a graph and $t_0$ is a tree in $\mathcal{T}_0$, we say that $t_0$ is a substituion tree of $G$ if $\mathrm{Graph}(t_0)=G$.
\end{dfn}

\begin{figure}[htbp]
\begin{center}
\includegraphics[scale=0.6]{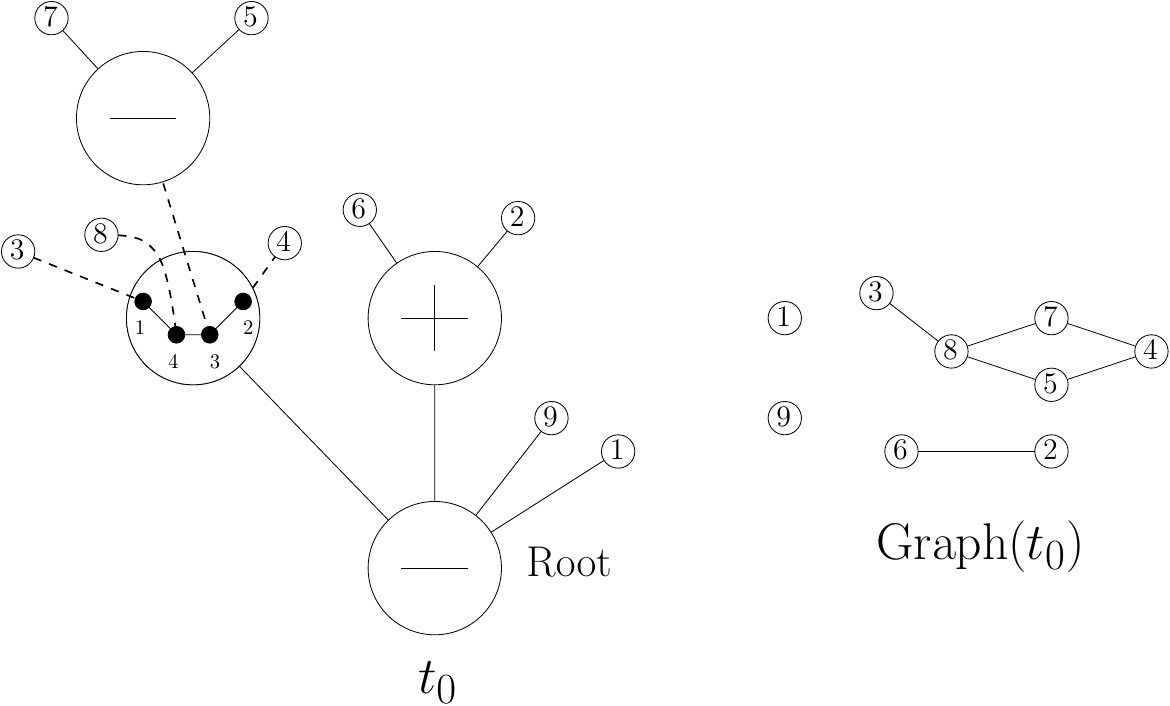}
\caption{A substitution tree $t_0$ and the corresponding graph $\mathrm{Graph}(t_0)$.}
\end{center}
\end{figure}

Note that if $t$ is a substitution tree then $\mathrm{Graph}(t)$ is a labeled graph.

\subsection{Modular decomposition}

In this short section we gather the main definitions and properties of modular decomposition. The historical reference is \cite{Gallai}, the interested reader may also look at \cite{graphclasses} or \cite{algo}.

The next definitions and theorems allow to get a unique recursive decomposition of any graph in the sense of \Cref{d2}, the modular decomposition, and to encode it by a tree.

\begin{dfn}
Let $G$ be a graph (labeled or not). A \emph{module} $M$ of $G$ is a subset of $V$ such that for every $(x,y)\in M^2$, and every $z\in V\backslash M$, $\{x,z\}\in E$ if and only if $\{y,z\}\in E$.
\end{dfn}

\begin{rem}
Note that $\emptyset, V$ and $\{v\}$ for $v\in V$ are always modules of $G$. Those sets are called the trivial modules of $G$.
\end{rem}

\begin{dfn}\label{prime}
A graph $G$ is \emph{prime} if it has at least $3$ vertices and its only modules are the trivial ones.
\end{dfn}

\begin{dfn}
A graph is called \emph{$\ominus$-indecomposable} (resp.~\emph{$\oplus$-indecomposable}) if it cannot be written as $\ominus[G_1,\dots,G_k]$ (resp.~$\oplus[G_1,\dots,G_k]$) for some $k\geq 2$ and weakly-labeled graphs $G_1,\dots, G_k$.
\end{dfn}

Note that a graph is $\ominus$-indecomposable if and only if it is connected, and $\oplus$-indecomposable if and only if its complementary is connected.

\begin{thm}[Modular decomposition, \cite{Gallai}]\label{thm1}
Let $G$ be a graph with at least $2$ vertices, there exists a unique partition $\mathcal{M}=\{M_1,\dots, M_k\}$ for some $k\geq 2$ (where the $M_i$'s are ordered by their smallest element), where each $M_i$ is a module of $G$ and such that either
\begin{itemize}
    \item $G=\oplus[M_1,\dots, M_k]$ and the $(M_i)_{1\leq i \leq k}$ are $\oplus$-indecomposable;
    \item $G=\ominus[M_1,\dots, M_k]$ and the $(M_i)_{1\leq i \leq k}$ are $\ominus$-indecomposable;
    \item there exists a unique prime graph $P$ such that $G=P[M_1,\dots, M_k]$.
\end{itemize}

\end{thm}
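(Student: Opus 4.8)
The statement to prove is the classical modular decomposition theorem (Gallai). The plan is to prove existence and uniqueness of the partition $\mathcal{M}$ into maximal proper modules (when $G$ is prime) or into the natural components (when $G$ or its complement is disconnected).

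\textbf{Step 1: Dichotomy on connectedness.} First I would consider the three cases separately. If $G$ is disconnected, then the connected components $M_1,\dots,M_k$ ($k\geq 2$) are modules (a vertex outside a component is non-adjacent to every vertex of it), each is $\ominus$-indecomposable since it is connected, and $G=\ominus[M_1,\dots,M_k]$; one checks $G$ is not $\oplus$-decomposable since $k\geq 2$ components means the complement is connected—wait, actually I would simply note that exactly one of the three bullets can hold, deferring the mutual exclusivity argument. Symmetrically, if $\bar G$ is disconnected, the co-components give $G=\oplus[M_1,\dots,M_k]$ with each $M_i$ being $\oplus$-indecomposable. The interesting case is when both $G$ and $\bar G$ are connected. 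Here I would appeal to the theory of the \emph{maximal proper modules}: define $\mathcal{M}$ to be the set of modules $M$ of $G$ with $M\neq V$ that are maximal for inclusion among proper modules. The key combinatorial fact (which I would prove as a lemma) is that when $G$ and $\bar G$ are both connected, distinct maximal proper modules are disjoint, and together they partition $V$; moreover the ``quotient'' graph $P$ on $\mathcal{M}$ (with $M_i\sim M_j$ adjacent iff some, equivalently every, vertex of $M_i$ is adjacent to some vertex of $M_j$) is prime, and $G=P[M_1,\dots,M_k]$.

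\textbf{Step 2: The disjointness/partition lemma.} The main obstacle is establishing that maximal proper modules are pairwise disjoint (hence partition $V$) precisely when $G,\bar G$ are connected. The tool is the following ``module intersection'' facts, which I would state and prove: (a) if $A,B$ are modules then $A\cap B$ is a module; (b) if $A,B$ are modules with $A\cap B\neq\emptyset$ then $A\cup B$ is a module; (c) if $A,B$ are modules with $A\setminus B\neq\emptyset$ then $B\setminus A$ is a module; (d) more subtly, if $A,B$ are \emph{overlapping} modules (all of $A\setminus B$, $B\setminus A$, $A\cap B$ nonempty), then $A\triangle B$ and $A\cup B$ are modules and $A\setminus B$ is a module, and in fact $A\cup B = V$ forces structure. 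From these one argues: take two distinct maximal proper modules $M,M'$; if they overlap, then $M\cup M'$ is a module, which by maximality must be $V$; then analyzing the edges between $M\setminus M'$, $M'\setminus M$, $M\cap M'$ using the module property yields that $G$ (or $\bar G$) decomposes as a union (resp. join), contradicting the assumed connectivity of both $G$ and $\bar G$. If instead $M\cap M'\neq\emptyset$ but neither contains the other, that is the overlap case; if one contains the other, that contradicts maximality. Hence $M\cap M'=\emptyset$. That every vertex lies in some maximal proper module is immediate since singletons $\{v\}$ are proper modules (using $|G|\geq 2$) contained in a maximal one.

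\textbf{Step 3: The quotient is prime, and uniqueness.} With $\mathcal{M}=\{M_1,\dots,M_k\}$ a partition into modules, the relation ``$M_i$ fully adjacent to $M_j$'' is well-defined by the module property, giving a graph $P$ of size $k$ with $G=P[M_1,\dots,M_k]$. I would show $P$ is prime: if $k=2$ then $G$ would be $\oplus[M_1,M_2]$ or $\ominus[M_1,M_2]$, contradicting both-connected; if $P$ had a nontrivial module $N\subsetneq\{1,\dots,k\}$ with $|N|\geq 2$, then $\bigcup_{i\in N}M_i$ would be a module of $G$ strictly containing some $M_i$ and proper, contradicting maximality. So $|P|\geq 3$ and $P$ is prime. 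For uniqueness: in the first two bullets the $M_i$ are forced to be the connected components (resp. co-components), which are unique; in the third, any decomposition $G=P'[M_1',\dots,M_k']$ with $P'$ prime and $k'\geq 3$ must have each $M_j'$ a proper module, and one shows each $M_j'$ is in fact maximal (otherwise $M_j'\subsetneq M\subsetneq V$ for a module $M$, and pulling $M$ back through the quotient would exhibit a nontrivial module of $P'$, contradicting primeness of $P'$); hence $\{M_j'\}=\mathcal{M}$, and then $P'=P$ by uniqueness of the quotient construction. Finally I would record mutual exclusivity of the three cases: the first requires $G$ disconnected, the second requires $\bar G$ disconnected, the third requires both connected, and these are pairwise incompatible. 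This completes the proof; the genuinely delicate point, deserving the most care, is the overlap analysis in Step 2 showing that overlapping maximal modules force a global union/join decomposition.
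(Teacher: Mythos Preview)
The paper does not prove this theorem: it is stated as the classical modular decomposition theorem with a citation to Gallai, and no argument is given in the paper. So there is nothing to compare your proof against on the paper's side.

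That said, your outline is the standard and correct route to Gallai's theorem. The trichotomy on connectivity of $G$ and $\bar G$, the module-closure facts (intersection, union when intersecting, symmetric difference, difference when overlapping), the argument that two overlapping maximal proper modules would force $M\cup M'=V$ and then a join/union decomposition contradicting both-connectedness, and the verification that the quotient by maximal proper modules is prime with uniqueness via pulling back a hypothetical nontrivial module of $P'$ --- all of this is right in spirit. One small point worth tightening in Step~2: once $M\cup M'=V$ and you have the three blocks $M\setminus M'$, $M\cap M'$, $M'\setminus M$ with all-or-nothing adjacency between each pair, the quotient is a graph on three vertices; you should note explicitly that every such graph is either $K_3$, $\overline{K_3}$, or $P_3$ (up to complement), and in each case $G$ or $\bar G$ is disconnected (for $P_3$, the two endpoints merge into a module giving a $\oplus$-decomposition). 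Also, in Step~1 you should record once and for all that $G$ and $\bar G$ cannot both be disconnected, which makes the three bullets mutually exclusive; you gesture at this but never state it cleanly. With those two clarifications the proof is complete.
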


This decomposition can be used to encode graphs by specific trees to get a one-to-one correspondence.

\begin{dfn}
Let $t$ be a substitution tree. We say that $t$ is a \emph{modular decomposition tree} if its internal nodes are either $\oplus$, $\ominus$ or prime graphs, and if there is no child of a node decorated with $\oplus$ (resp.~$\ominus$) which is decorated with $\oplus$ (resp.~$\ominus$).

\end{dfn}

To a graph $G$ we associate a modular decomposition tree by recursively applying the decomposition of
\Cref{thm1} to the modules $(M_i)_{1\leq i \leq k}$, until they are of size $1$. First of all, at each step, we order the different modules increasingly according to their minimal vertex labels. Doing so, a labeled graph $G$ can be
encoded by a modular decomposition tree. The internal nodes are decorated with the different graphs that are encountered along the recursive decomposition process ($\oplus$ if $G=\oplus[M_1,\dots, M_k]$, $\ominus$ if $G=\ominus[M_1,\dots, M_k]$, $P$ if $G=P[M_1,\dots, M_k]$).\\
At the end, every module of size $1$ is converted into a leaf labeled by the label of the vertex.

This construction provides a one-to-one correspondence
between labeled graphs and modular decomposition trees that maps the size of a graph to the size of the corresponding tree.

\begin{prop}
Let $G$ be a graph, and $t$ its modular decomposition tree, then $t$ is the only modular decomposition tree such that $\mathrm{Graph}(t)=G$.
\end{prop}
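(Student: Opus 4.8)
The plan is to establish uniqueness by showing that any modular decomposition tree $t$ with $\mathrm{Graph}(t) = G$ must coincide, level by level from the root down, with the tree built by the recursive procedure described above; the engine of the argument is the uniqueness clause of \Cref{thm1}. So suppose $t$ and $t'$ are both modular decomposition trees with $\mathrm{Graph}(t) = \mathrm{Graph}(t') = G$; I want to prove $t = t'$ (and in particular that the constructed tree is the unique one, which also re-proves existence is consistent). I will argue by strong induction on $|G|$. The base case $|G| = 1$ is immediate: a modular decomposition tree producing a one-vertex graph is a single leaf, labeled by the unique vertex label.

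For the inductive step, assume $|G| \geq 2$. Let $r$ be the root of $t$ and $r'$ the root of $t'$. The root cannot be a leaf since $|G| \geq 2$, so $r$ is decorated by $\oplus$, $\ominus$, or a prime graph $P$, and likewise for $r'$. The first key step is to show that the partition of $V_G$ induced by the children of $r$ — i.e.\ the sets $V_{\mathrm{Graph}(t_i)}$ where the $t_i$ are the subtrees hanging from $r$ — is exactly the partition $\mathcal{M} = \{M_1,\dots,M_k\}$ furnished by \Cref{thm1}, and that the decoration of $r$ matches the corresponding case of that theorem. This requires two sub-observations: (a) each $V_{\mathrm{Graph}(t_i)}$ is a module of $G$ (immediate from \Cref{2.3}: vertices in the same $H_i$-block have identical adjacency to everything outside the block, by the second bullet of the substitution definition); and (b) the blocks are of the right ``indecomposable'' type and the root decoration is the one forced by \Cref{thm1}. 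For (b) one uses the defining constraint of a modular decomposition tree: a child of a $\oplus$-node is not itself a $\oplus$-node, hence $\mathrm{Graph}$ of that child is $\oplus$-indecomposable (connected complement), and symmetrically for $\ominus$; for a prime-decorated root, the induced quotient graph on the blocks is precisely $P$, which is prime. Since \Cref{thm1} asserts this partition-plus-type data is \emph{unique}, the same analysis applied to $t'$ yields the same partition $\{M_1,\dots,M_k\}$ and the same root decoration, so $r$ and $r'$ carry the same label and $t$, $t'$ have matching children-partitions.

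The second step is to match up the subtrees. By the convention fixing the order of subtrees by minimal leaf label, the $i$-th subtree $t_i$ of $r$ and the $i$-th subtree $t_i'$ of $r'$ both satisfy $\mathrm{Graph}(t_i) = \mathrm{Graph}(t_i')$ after reduction — more precisely, they both encode the same module $M_i$ (as a weakly-labeled graph, hence the same graph up to the canonical reduction). Each $t_i$ (resp.\ $t_i'$) is itself a modular decomposition tree of the reduced graph on $M_i$: the ``no $\oplus$ under $\oplus$'' condition is inherited, and reduction does not affect adjacency structure. Since $|M_i| < |G|$ (there are at least two blocks, each nonempty), the induction hypothesis gives $t_i = t_i'$ for every $i$. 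As the trees are non-plane and we have matched the full multiset of subtrees together with the root decoration, we conclude $t = t'$.

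The main obstacle, and the place where care is needed, is Step one: verifying rigorously that the children-partition of a modular decomposition tree is forced to be the \Cref{thm1} partition rather than some coarser or finer module partition. Concretely, one must rule out, say, a $\oplus$-node whose blocks are $\oplus$-decomposable (excluded by the tree's definition) but also confirm that in the prime case the blocks cannot be merged — this is exactly where primeness of the quotient $P$ and the ``only trivial modules'' property of a prime graph are invoked, together with the standard fact (from modular decomposition theory, \cite{Gallai}) that the maximal proper modules of a graph that is neither complete nor empty-edged nor such that its quotient is prime... in short, matching the trichotomy. I would lean on \Cref{thm1}'s uniqueness statement to do the heavy lifting, reducing Step one to: (i) the children-blocks are modules, (ii) they have the indecomposability type matching the root's decoration, and (iii) the quotient graph on the blocks equals the root's decoration; items (i)--(iii) then pin down the partition via \Cref{thm1}. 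Everything else is bookkeeping about reduction and the subtree-ordering convention.
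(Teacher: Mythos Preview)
The paper does not actually supply a proof of this proposition; it is stated without argument as part of the ``classical results around modular decomposition'' reviewed in \cref{sec2}, with the implicit reference to \cite{Gallai}. So there is nothing to compare against directly.

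That said, your proof is correct and is precisely the standard argument one would give: strong induction on $|G|$, using the uniqueness clause of \Cref{thm1} to pin down the root decoration and the top-level partition, then recursing into the subtrees. The one step that deserves a word more than you give it is the claim that a child $t_i$ of a $\oplus$-root has $\mathrm{Graph}(t_i)$ $\oplus$-indecomposable. Your parenthetical ``connected complement'' is the right intuition but not quite the full case analysis: one should note that the root of $t_i$ is either a leaf (single vertex, trivially $\oplus$-indecomposable), or $\ominus$ (so $\mathrm{Graph}(t_i)$ is disconnected, hence its complement is connected), or prime (and a prime graph, having only trivial modules, is connected, as is its complement; substitution into a connected template yields a connected graph, and likewise for the complement). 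With that, the trichotomy of \Cref{thm1} is matched exactly, and uniqueness applies. The labeling and ordering details you flag at the end are indeed routine given the paper's convention that subtrees are ordered by minimal leaf label, which coincides with the ordering of the $M_i$ in \Cref{thm1}.
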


\begin{rem}
It is crucial to consider modular decomposition trees as non-plane: otherwise, since prime graphs appearing in the decorations can have several labelings, there would be several modular decomposition trees associated with the same graph.
\end{rem}

\subsection{Expanded trees}

An important tool that will be used in \cref{lastsection} to prove \cref{thmintro2} is the notion of expanded trees, which are substitution trees corresponding to a fixed graph and maximizing the number of edges. The aim of this subsection is to prove \cref{exp}, which will be the only result used later.

\begin{dfn}
Let $G$ be a graph. An \emph{expanded tree} of $G$ is a substitution tree $t_0$ of $G$ whose non-linear nodes are decorated with prime graphs and all linear nodes have exactly two children.
\end{dfn}

We introduce the inflation operation to build expanded tree from any given tree, and to count later the number of expanded tree corresponding to a graph. Let $t$ be a substitution tree, $\mathfrak{n}$ an internal node of $t$ and $\tau$ a substitution tree of the decoration of $\mathfrak{n}$. Consider the following modifications of $t$:
\begin{itemize}
    \item the node $\mathfrak{n}$ is replaced by $\tau$;
    \item for every $j$, the $j$-th leaf of $\tau$ is replaced by the $j$-th tree of $t[\mathfrak{n}]$.
\end{itemize}

The resulting tree is called the inflation of $t$ at $\mathfrak{n}$ with $\tau$. Note that it is still a substitution tree of $\mathrm{Graph}(t)$, and that every inflation increases the number of edges if it does change $t$. See \cref{infla} for an example of inflation.

\begin{figure}
\begin{center}
\centering
\includegraphics[scale=0.55]{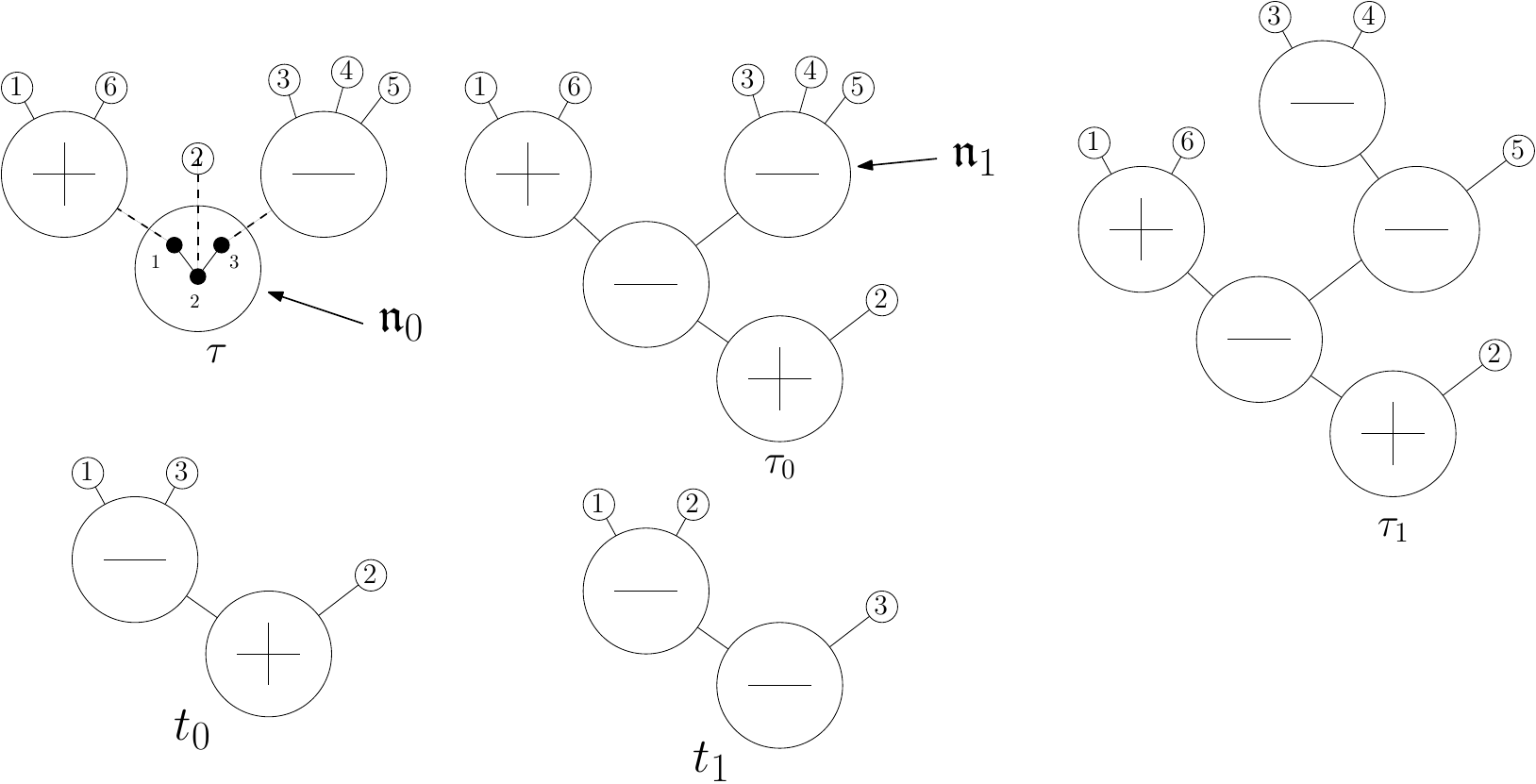}
\caption{From left to right: a tree $\tau$, $\tau_0$ the inflation of $\tau$ at $\mathfrak{n_0}$ with $t_0$, $\tau_1$ the inflation of $\tau_0$ at $\mathfrak{n_1}$ with $t_1$ which is an expanded tree.}\label{infla}
\end{center}
\end{figure}

\begin{lem}
Let $G$ be a graph and $t$ a substitution tree of $G$.
The tree $t$ can be transformed into an expanded tree of $G$ by applying successive inflations.
\end{lem}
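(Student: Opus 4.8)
The plan is to argue by induction on the structure of the substitution tree $t$, using a suitable potential that strictly increases under inflation and is bounded, so that the process of applying inflations terminates in a tree that can no longer be inflated in a non-trivial way --- which we then identify with an expanded tree.

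First I would fix the target: by definition an expanded tree of $G$ is a substitution tree $t_0$ of $G$ whose non-linear nodes are decorated with prime graphs and whose linear nodes all have exactly two children. So I need two kinds of "local defects" to be removed: (i) a non-linear node whose decoration $H$ is not prime, and (ii) a linear node (decorated $\oplus$ or $\ominus$) with at least three children. For defect (i), apply \Cref{thm1} to the decoration $H$: since $H$ has size $\geq 2$ and is not prime, it can be written as $\oplus[M_1,\dots,M_k]$, $\ominus[M_1,\dots,M_k]$, or $P[M_1,\dots,M_k]$ with $k\geq 2$ and $k<|H|$ (the number of parts is strictly less than the number of vertices because each $M_i$ has size $\geq 1$ and at least one has size $\geq 2$ when $H$ is not prime). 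Let $\tau$ be the corresponding two-level substitution tree of $H$ (root decorated $\oplus$, $\ominus$, or $P$, with the reductions of the $M_i$ hung below); inflating $t$ at $\mathfrak{n}$ with $\tau$ replaces the bad node by nodes each with strictly fewer children, and the result is still a substitution tree of $\mathrm{Graph}(t)=G$ by the remark following the definition of inflation. For defect (ii), if $\mathfrak{n}$ is decorated $\oplus$ with children-trees $t_1,\dots,t_k$, $k\geq 3$, take $\tau$ to be the substitution tree of $\oplus$ (as a graph of size $k$) consisting of a right comb of binary $\oplus$-nodes; inflating at $\mathfrak{n}$ with $\tau$ yields a substitution tree of $G$ in which that node is replaced by $k-1$ binary $\oplus$-nodes, and similarly for $\ominus$. (One must order leaves of $\tau$ consistently with the convention so that $\mathrm{Graph}$ is unchanged, which is exactly the content of the "still a substitution tree of $\mathrm{Graph}(t)$" assertion.)

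Then I would set up the termination argument. Define, for a substitution tree $s$, the quantity $\Phi(s) = \sum_{\mathfrak{n}} (c(\mathfrak{n})-1)$ where the sum is over internal nodes $\mathfrak{n}$ and $c(\mathfrak{n})$ is the number of children of $\mathfrak{n}$; equivalently $\Phi(s)$ is the number of edges of $s$ counted appropriately, and it is bounded above by $|G|-1$ since the leaves of $s$ are exactly the $|G|$ vertices of $G$ and $s$ is a tree (so $\Phi(s) = \#\{\text{edges of } s\} - \#\{\text{internal nodes}\} + \dots$, the precise bound being $|G|-1$). Each non-trivial inflation strictly increases $\Phi$: replacing a node with $c$ children by the tree $\tau$ having $\geq 1$ internal node, the total $\sum(c(\mathfrak{n})-1)$ over the new nodes equals $(|H|-1)$ distributed over $\geq 2$ nodes, which is strictly larger than the single contribution $c-1 = |H|-1$... — here I should instead use the cleaner invariant already flagged in the text: "every inflation increases the number of edges if it does change $t$", and $\mathrm{Graph}(t)=G$ has a fixed finite number of edges which dominates the edge count of any substitution tree. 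So the number of edges of the produced graph is bounded by $\binom{|G|}{2}$, hence only finitely many non-trivial inflations can be performed; when none is possible, there is no non-prime non-linear node and no linear node with $\geq 3$ children, i.e. we have an expanded tree of $G$.

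The main obstacle I expect is purely bookkeeping rather than conceptual: verifying carefully that each inflation step indeed leaves $\mathrm{Graph}$ invariant and produces a legitimate substitution tree (labels in $\{1,\dots,|G|\}$, the non-plane/ordering convention respected, decorations still of the allowed form), and choosing the potential so that it visibly and strictly increases at every non-trivial step while being bounded by a function of $G$ alone. Both points are essentially already granted by the paragraph preceding the lemma — the inflation is declared to be a substitution tree of $\mathrm{Graph}(t)$ and to increase the edge count — so the proof reduces to: (1) observe that a tree failing to be expanded has a node exhibiting defect (i) or (ii); (2) exhibit the explicit $\tau$ that repairs it via inflation; (3) invoke the bounded, strictly-increasing edge count to conclude termination. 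I would write it in three short numbered steps in exactly this order.
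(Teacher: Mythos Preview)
Your argument is correct in outline, but it takes a more roundabout route than the paper. You iterate: repair one defect at a time (one level of the decomposition for a non-prime decoration, a comb for an over-wide linear node) and appeal to a monotone, bounded potential to force termination. The paper avoids the termination machinery entirely by doing each repair \emph{in one shot}: for a non-linear node with non-prime decoration $H$ it inflates with the full modular decomposition tree of $H$ (whose internal nodes are, by definition, linear or prime), so after a single pass over the non-linear nodes all of them are prime; then for each linear node with $k>2$ children it inflates once with an arbitrary binary tree with $k$ leaves. Two passes, no induction, no potential.

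One genuine slip in your write-up: your candidate potential $\Phi(s)=\sum_{\mathfrak n}(c(\mathfrak n)-1)$ is in fact constantly equal to $|G|-1$ for any substitution tree of $G$, so it cannot increase; and your fallback bound, that the edge count of $G$ ``dominates the edge count of any substitution tree'', conflates edges of the graph with edges of the tree. The correct (and easy) bound is structural: a rooted tree with $|G|$ leaves in which every internal node has at least two children has at most $|G|-1$ internal nodes, hence at most $2|G|-2$ edges. With that fix your termination argument goes through. Still, the paper's direct approach is shorter and sidesteps the issue altogether.
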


\begin{proof}
For every non-linear node $\mathfrak{n}$ whose decoration is not prime, we can perform the inflation operation with the modular decomposition tree of the decoration of $\mathfrak{n}$ to get a tree whose non-linear nodes are all prime. Then for every linear nodes that has $k>2$ children, we can perform the inflation operation with a binary non-plane tree with $k$ leaves, and whose internal nodes are all decorated with $\oplus$. We thus get an expanded tree.
\end{proof}

\begin{lem}\label{lemexp}
Let $G$ be a graph and $t$ the modular decomposition tree of $G$. Every expanded tree of $G$ can be obtained by inflating every linear node $\mathfrak{n}$ of $t$ of size $k$ with a binary tree with $k$ leaves.
\end{lem}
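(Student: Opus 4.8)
The plan is to establish the stated characterization of expanded trees by combining the two preceding lemmas with the uniqueness of the modular decomposition tree. Recall that the previous lemma shows every substitution tree of $G$ can be turned into an expanded tree via inflations; in particular, starting from the modular decomposition tree $t$ of $G$ itself, inflating every linear node of size $k$ by a binary tree with $k$ leaves (all internal nodes decorated by the appropriate linear symbol) and leaving the non-linear nodes alone — since in a modular decomposition tree they are already prime — produces an expanded tree of $G$. So the content of \cref{lemexp} is the \emph{converse}: that this is the \emph{only} way to obtain an expanded tree, i.e. every expanded tree arises in this fashion.

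First I would argue that the ``shape'' of an expanded tree of $G$, once we contract each maximal block of binary linear nodes decorated by the same symbol ($\oplus$ or $\ominus$) back into a single linear node of the appropriate arity, must coincide with the modular decomposition tree $t$ of $G$. The key point is that the reverse of inflation — contracting a linear node into its linear parent when they carry the same decoration, and replacing a prime-decorated non-linear node appropriately — does not change $\mathrm{Graph}(\cdot)$, and produces a substitution tree whose internal nodes are $\oplus$, $\ominus$ or prime graphs with no linear node having a child with the same linear decoration. That is exactly a modular decomposition tree of $G$; by the uniqueness proposition it must be $t$. Hence the expanded tree we started from is obtained from $t$ by re-inflating each linear node of $t$, of size $k$ say, into some binary tree with $k$ leaves whose internal nodes all carry that node's linear decoration, and by leaving each (already prime) non-linear node untouched.

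The remaining point is that the non-linear nodes of $t$ genuinely cannot be further inflated inside an expanded tree: a non-linear node of a modular decomposition tree is decorated by a prime graph $P$, and a prime graph has no nontrivial modules, so its only substitution tree with non-linear nodes all prime and linear nodes binary is the trivial one (the root decorated $P$ with $|P|$ leaves). Thus the only freedom in passing from $t$ to an expanded tree lies at the linear nodes, and there precisely the described binary inflation. Assembling these observations gives the statement.

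I expect the main obstacle to be the bookkeeping around the ``de-inflation'' (contraction) step: one must check carefully that contracting chains of binary linear nodes of an expanded tree yields a \emph{bona fide} modular decomposition tree — in particular that after contraction no linear node has a child with the same linear label (which is automatic if one contracts maximal monochromatic chains) and that the non-linear nodes, being prime, are left in the required form — so that the uniqueness proposition can be invoked. Once the contracted tree is identified with $t$, the rest is the straightforward observation that inflation and contraction are mutually inverse on the linear part and that primality rigidifies the non-linear part.
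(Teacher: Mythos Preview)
Your proposal is correct and follows essentially the same route as the paper: contract maximal monochromatic chains of binary linear nodes in an expanded tree to obtain a modular decomposition tree of $G$, invoke uniqueness to identify it with $t$, and observe that undoing the contraction is precisely inflation of each linear node of $t$ by a binary tree. Your additional remark about the rigidity of prime nodes is harmless but unnecessary, since in the contraction argument the non-linear (prime) nodes are simply left untouched and automatically match those of $t$.
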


\begin{proof}
Take one expanded tree $t'$ of $G$. We can merge every connected component of internal nodes of $t'$ decorated with $\oplus$ (resp. $\ominus$) into one single node decorated with $\oplus$ (resp. $\ominus$), and get a tree $\tau$ such that $\tau$ is a modular decomposition tree: since $t'$ is an expanded tree, the non-linear nodes of the resulting tree are prime, and by construction, two successive linear nodes cannot share the same decoration. Moreover $\mathrm{Graph}(t)=\mathrm{Graph}(t')=G$: thus $\tau=t$. Doing the reverse operation of merging is exactly the infation of every linear node $\mathfrak{n}$ of $t$. Since in $t'$ every linear nodes has two children, these inflations are done with binary trees. 
\end{proof}

\begin{cor}\label{exp}
Let $G$ be a graph of size $n$, $\tau$ its modular decomposition tree. Let $k$ (resp.~$j$) be the number of linear (resp. non-linear) nodes of $\tau$. We arbitrarily order the linear (resp. non-linear) nodes of $\tau$: let $d_i$ (resp.~$e_i$) be the number of children of the $i$-th linear (resp. non-linear) node of $\tau$ for $1\leq i \leq k$ (resp.~$1\leq i \leq j$). Then there are exactly $\prod\limits_{i=1}^k (2d_i-3)!!$ expanded trees of $G$, each having $2n-2-\sum\limits_{i=1}^{j}(e_i-2)$ edges.
\end{cor}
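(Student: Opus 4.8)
The plan is to read off the expanded trees of $G$ from its modular decomposition tree $\tau$ via \Cref{lemexp}, and then to count both quantities — the number of expanded trees and their number of \emph{tree}-edges — by bookkeeping on the combinatorial structure of $\tau$. I emphasize at the outset that the edge count concerns the edges of the expanded tree seen as a tree (its parent–child links), not the edges of the graph $\mathrm{Graph}(t_0)$ it encodes; this distinction is what makes the double-counting identity for $\tau$ available below.

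For the enumeration, \Cref{lemexp} tells me that every expanded tree of $G$ is obtained from $\tau$ by leaving each of the $j$ non-linear nodes untouched and inflating the $i$-th linear node (of size $d_i$) with a binary non-plane tree on $d_i$ leaves, whose internal nodes inherit the decoration $\oplus$ or $\ominus$ of that node. Conversely, any such family of binary trees produces an expanded tree of $G$, since inflation preserves the encoded graph and yields prime non-linear nodes together with binary linear nodes. Because the subtrees hanging at a linear node of $\tau$ are pairwise distinguishable (they have distinct minimal leaf labels), distinct binary shapes there give distinct non-plane trees, and the inflations at different linear nodes do not interact; reading off the binary hierarchy at each former linear node inverts the construction. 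Hence the map is a bijection and the number of expanded trees is $\prod_{i=1}^k N(d_i)$, where $N(d)$ is the number of full binary non-plane trees with $d$ labeled leaves. A standard induction gives $N(d)=(2d-3)!!$: one obtains such a tree on $d$ leaves from one on $d-1$ leaves by attaching the $d$-th leaf at one of $2d-3$ positions (on any of the $2d-4$ existing edges, or as sibling of a freshly created root), and $N(2)=1=(2\cdot 2-3)!!$. This yields the product $\prod_{i=1}^k (2d_i-3)!!$.

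For the edge count, fix any expanded tree $t_0$; being a tree, it has (number of nodes)$-1$ edges. Its nodes are the $n$ leaves, the $j$ non-linear nodes carried over from $\tau$, and, for each linear node of $\tau$ of size $d_i$, the $d_i-1$ internal nodes of the binary tree replacing it. Thus $t_0$ has $n+j+\sum_{i=1}^k(d_i-1)$ nodes, hence $n+j-1+\sum_{i=1}^k(d_i-1)$ edges; since this depends only on $\tau$, it is indeed the same for all expanded trees. To express it through the $e_i$'s I count the edges of $\tau$ two ways: $\tau$ has $n+k+j$ nodes, hence $n+k+j-1$ edges, while counting each edge as the child-link of its parent gives $\sum_{i=1}^k d_i+\sum_{i=1}^j e_i$. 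Equating yields $\sum_{i=1}^k d_i=n+k+j-1-\sum_{i=1}^j e_i$, and substituting this into $n+j-1+\sum_{i=1}^k d_i-k$ collapses the expression, using $2j-\sum_{i=1}^j e_i=-\sum_{i=1}^j(e_i-2)$, to the claimed value $2n-2-\sum_{i=1}^j(e_i-2)$.

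The main obstacle is the bijective first part: one must verify carefully that \Cref{lemexp} really upgrades to a bijection between expanded trees and tuples of binary hierarchies — in particular that distinct binary shapes at a linear node give genuinely distinct non-plane trees (because the attached subtrees are distinguishable) and that the choices at the various linear nodes are independent. Once this is secured, both the product $\prod_{i=1}^k(2d_i-3)!!$ and the edge count are routine, the only subtlety in the latter being to keep in mind that we count edges of the tree, so that the identity $\sum_{i}d_i+\sum_i e_i=n+k+j-1$ can be invoked to eliminate the $d_i$.
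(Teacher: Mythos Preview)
Your proof is correct and follows essentially the same approach as the paper: both invoke \Cref{lemexp} to reduce to counting binary non-plane trees with $d_i$ labeled leaves and obtain $(2d_i-3)!!$ (the paper cites \cite{aldous}, you give the standard $2d-3$-position induction), and both compute the edge count by elementary tree bookkeeping. The only difference is cosmetic: for the edges the paper works inside the expanded tree using $\sum_i(e_i-2)=\#\text{edges}-2\,\#\text{internal nodes}$ and $\#\text{internal nodes}=\#\text{edges}-(n-1)$, whereas you count the nodes of the expanded tree directly and then use a double count in $\tau$ to trade the $d_i$'s for the $e_i$'s; your treatment of the bijectivity (distinguishability of subtrees by minimal leaf label, independence across linear nodes) is in fact more careful than the paper's.
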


\begin{proof}

\Cref{lemexp} imply that there are exactly $$\prod\limits_{i=1}^k \#\{\text{Non-plane trees with}\ d_i\ \text{vertices}\}$$
expanded trees of $G$.

Note that unrooted non-plane trees with $r+1$ vertices are in bijection with rooted non-plane trees with $r$ vertices (by deleting the leaf of label $r+1$ and choosing the root to be its only neighbour in the tree). From \cite{aldous}, there are $(2r-3)!!$ rooted non-plane trees with $r$ vertices. Thus there exists $(2r-3)!!$ non-plane trees with leaves labeled from $1$ to $r+1$ and there are exactly $\prod\limits_{i=1}^k (2d_i-3)!!$ expanded trees of $G$.

 Note that $\#\text{edge}=\#\text{internal node}+\#\text{leave}-1$. Thus $\#\text{internal node}=\#\text{edge}-(n-1)$. Moreover, the number of edges is also the sum of the numbers of children of every internal nodes. Since all non-linear nodes are binary, $\sum\limits_{i=1}^{j}|e_i|-2=\#\text{edge}-2\#\text{internal node}$. Thus $2n-2-\sum\limits_{i=1}^{j}(e_i-2)=\#\text{edge}$.
\end{proof}

In the following, with the notations of \cref{exp}, we denote for every graph $G$
\begin{align}\label{betaG}
    \beta(G):=\frac12 \sum\limits_{i=1}^{j}(e_i-2).
\end{align}

Note that $\beta(G)$ is positive if and only if $G$ is not a cograph.

\section{Background on graphons}\label{graphon}

We now review the necessary material on graphons. We refer the reader to \cite{lovasz} for a comprehensive presentation of deterministic graphons, while \cite{janson} studies specifically the convergence of random graphs in the sense of graphons. Here we only recall the properties needed to prove the convergence of random graphs towards the Brownian cographon (see \cite{bassino2021random}).

\begin{dfn}\label{Graphon}
A graphon is an equivalence class of symmetric functions $f:[0,1]^2\mapsto [0,1]$, under the equivalence relation $\sim$, where $f\sim g$ if there exists a measurable function $\phi:[0,1]\mapsto[0,1]$ that is invertible and measure preserving such that, for almost every $(x,y)\in[0,1]^2$, $f(\phi(x),\phi(y))=g(x,y)$. We denote by $\tilde{\mathcal{W}}$ the set of graphons.
\end{dfn}

Intuitively graphons can be seen as continuous analogous of graph adjacency matrices, where graphs are considered  up to relabeling (hence the quotient by $\sim$). There is a natural way to embed a finite graph into graphons:

\begin{dfn}\label{graphe-graphon}
Let $G$ be a (random) graph of size $n$. We define the (random) graphon $W_G$ to be the equivalence class of $w_G:[0,1]^2\mapsto[0,1]$ defined by:
\begin{align*}
    \forall(x,y)\in[0,1]^2\quad w_G(x,y)&:=1_{\lceil nx\rceil \text{connected to}\lceil ny\rceil}
\end{align*}
\end{dfn}

There exists a pseudo-metric $\delta_{\square}$ on the set of graphons. If $\tilde{\mathcal{W}}$ is the set of graphons quotiented by the equivalence relation $W\equiv W'$ if $\delta_{\square}(W,W')=0$, we get that $(\tilde{\mathcal{W}},\delta_{\square})$ is compact \cite[Chapter 8]{lovasz}.

Now we introduce random variables that play the role of margins in the space of graphons. For $k\geq 1$ and $\mathbf{W}$ a random graphon, we denote by $\mathrm{Sample}_k(\mathbf{W})$ the unlabeled random graph built as follows:
$\mathrm{Sample}_k(W)$ has vertex set $\{v_1,v_2,\dots,v_k\}$ and,
letting $(X_1,\dots,X_k)$ be i.i.d.~uniform random variables in $[0,1]$,
we connect vertices $v_i$ and $v_j$ with probability $w(X_i,X_j)$
(these events being independent, conditionally on $(X_1,\cdots,X_k)$ and $\mathbf{W}$). The construction does not depend on the representation of the graphon, and is compatible with the quotient. Moreover, if $\mathbf{W}$ is a random graphon,  $(\mathrm{Sample}_k(\mathbf{W}))_{k\geq 0}$ determines the law of $\mathbf{W}$.

Since $(\tilde{\mathcal{W}},\delta_{\square})$ is compact, we can define for $\delta_{\square}$ the convergence in distribution of a random graphon. If $(\mathbf{G}^{(n)})_{n\geq 1}$ is a sequence of random graphs, there exists a simple criterion \cite[Theorem 3.1]{janson} characterizing the convergence in distribution of $(W_{\mathbf{G}^{(n)}})$ with respect to $\delta_{\square}$:

\begin{thm}[Rephrasing of \cite{janson}, Theorem $3.1$]
\label{Portmanteau}
	For any $n$, let $\mathbf{G}^{(n)}$ be a random graph of size $n$. Denote by $W_{\mathbf{G}^{(n)}}$ the random graphon associated to ${\mathbf{G}^{(n)}}$ and $\mathbf{W}$ a random graphon.
	The following assertions are equivalent:
	\begin{enumerate}
		\item [(a)] The sequence of random graphons $(W_{\mathbf{G}^{(n)}})_{n\geq 1}$ converges in distribution to $\mathbf{W}$. 
		\item [(b)] The random infinite vector $\left(\frac{\mathrm{Occ}_{H}(\mathbf{G}^{(n)})}{n^{|H|}}\right)_{H\text{ finite graph}}$ converges in distribution in the product topology to $$\left(\mathbb{P}(\mathrm{Sample}_{|H|}(\mathbf{W})=H\ \vert\  \mathbf{W})\right)_{H\text{ finite graph}}.$$ 
\item [(c)]For every finite graph $H$, there is a constant $\Delta_H \in [0,1]$ such that \[\mathbb{E}\left[\frac{\mathrm{Occ}_{H}(\mathbf{G}^{(n)})}{n^{|H|}}\right] \xrightarrow{n\to\infty} \mathbb{P}(\mathrm{Sample}_{|H|}(\mathbf{W})=H).\]

        \item [(d)] For every $\ell\geq 1$, denote by $\mathbf{\mathfrak{I}_\ell}^{(n)}$ be a uniform partial injection from $\{1,\dots,n\}$  whose image is $\{1,\dots, \ell\}$
Then the subgraph of $\mathbf{G}^{(n)}$ induced by $\mathbf{\mathfrak{I}_\ell}^{(n)}$ converges in distribution to $\mathrm{Sample}_{\ell}(\mathbf{W})$.

	\end{enumerate}

\end{thm}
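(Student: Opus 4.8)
The plan is to treat this as a genuine \emph{rephrasing}: rather than reproving the Diaconis--Janson criterion, I would reduce the four items to the form established in \cite[Theorem 3.1]{janson} and then translate between the homomorphism-density bookkeeping used there and the induced-subgraph quantities $\mathrm{Occ}_H$, $\mathrm{Sample}_k$ used here. The backbone of the argument is the method of moments, together with two classical facts about homomorphism densities of graphons: they take values in $[0,1]$, and they are multiplicative over disjoint unions.

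First I would translate everything into densities. For a graphon $W$ and a finite graph $H$ on $k$ vertices, let $t_{\mathrm{ind}}(H,W)$ be the induced density (the probability that $\mathrm{Sample}_k(W)$, read with a uniformly random labeling, equals $H$) and $t(H,W)$ the ordinary homomorphism density. Two elementary observations are needed. First, for any $n$-vertex graph $G$ one has $\mathrm{Occ}_H(G)/n^{k}=t_{\mathrm{ind}}(H,W_G)(1+O_H(1/n))$ up to a fixed combinatorial constant recording relabelings of $H$, the $O_H(1/n)$ coming only from replacing the falling factorial $n^{\underline k}$ by $n^{k}$ and from the null-measure diagonal of $[0,1]^2$; and the very same identity relates $\mathbb P(\mathrm{Sample}_k(\mathbf W)=H\mid\mathbf W)$ to the densities $t_{\mathrm{ind}}(H',\mathbf W)$ over relabelings $H'$ of $H$, so both sides of (c) are the expectation of one and the same linear combination of induced densities (of $W_{\mathbf G^{(n)}}$, respectively of $\mathbf W$). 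Hence (b) is equivalent to the convergence in distribution, in the product topology, of $\big(t_{\mathrm{ind}}(H,W_{\mathbf G^{(n)}})\big)_H$ to $\big(t_{\mathrm{ind}}(H,\mathbf W)\big)_H$; and since for graphons $t(H,W)=\sum_{H'\supseteq H}t_{\mathrm{ind}}(H',W)$ (sum over supergraphs of $H$ on the same vertex set, the non-injective locus being null) is a finite unitriangular, hence invertible, change of basis, this is in turn equivalent to the convergence of $\big(t(H,W_{\mathbf G^{(n)}})\big)_H$ to $\big(t(H,\mathbf W)\big)_H$. This bookkeeping step is the fiddliest part of the whole proof.

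Next I would run the moment argument to obtain (a) $\Leftrightarrow$ (b) $\Leftrightarrow$ (c). Since $t(H,W)\in[0,1]$, the random family $\big(t(H,\mathbf W)\big)_H$ has moments of all orders; and since $t(H_1\sqcup H_2,W)=t(H_1,W)\,t(H_2,W)$, every joint moment of the family equals a single expectation $\mathbb E\big[t(\bigsqcup_i H_i,\mathbf W)\big]$. A $[0,1]$-valued family being determined by its joint moments, the law of $\big(t(H,\mathbf W)\big)_H$ --- equivalently, by \cite[Theorem 3.1]{janson}, the law of $\mathbf W$ --- is pinned down by the scalars $\big(\mathbb E[t(H,\mathbf W)]\big)_H$. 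Because $(\tilde{\mathcal W},\delta_\square)$ is compact, the sequence $(W_{\mathbf G^{(n)}})$ is automatically tight, so the method of moments gives that (a) holds if and only if $\mathbb E[t(H,W_{\mathbf G^{(n)}})]\to\mathbb E[t(H,\mathbf W)]$ for every $H$; transporting this statement back through the two changes of basis of the previous paragraph is exactly (c). (The implication (b) $\Rightarrow$ (c) is in any case immediate: the coordinates are bounded, so distributional convergence forces convergence of means.)

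Finally, (c) $\Leftrightarrow$ (d) is a direct unwinding of definitions. A uniform partial injection from $\{1,\dots,n\}$ with image $\{1,\dots,\ell\}$ is the same data as a uniformly chosen $\ell$-subset $S\subseteq\{1,\dots,n\}$ together with a uniform bijection $S\to\{1,\dots,\ell\}$, so the subgraph of $\mathbf G^{(n)}$ it induces is a uniformly labeled copy of a uniformly chosen induced $\ell$-vertex subgraph; its law lives on the finite set of graphs on vertex set $\{1,\dots,\ell\}$, where convergence in distribution means convergence of each atom, and the atom at $H$ is $\mathbb E[\mathrm{Occ}_H(\mathbf G^{(n)})]/n^{\underline\ell}$. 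Since $\mathrm{Sample}_\ell(\mathbf W)$ is by construction the prospective limit and $\mathbb P(\mathrm{Sample}_\ell(\mathbf W)=H)$ is precisely the right-hand side of (c), matching atoms yields the equivalence. All the genuinely analytic input --- compactness of $(\tilde{\mathcal W},\delta_\square)$ and the fact that subgraph densities separate graphons --- is imported wholesale from \cite{lovasz,janson}, so the work here is confined to the combinatorial translation described above.
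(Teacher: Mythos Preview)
The paper does not prove this theorem at all: it is stated as a direct rephrasing of \cite[Theorem~3.1]{janson} and imported without argument, the only work done locally being the reformulation in terms of $\mathrm{Occ}_H$ and $\mathrm{Sample}_\ell$. Your sketch is therefore not competing with a proof in the paper but with Diaconis--Janson's original, and it follows essentially their route (compactness of $\tilde{\mathcal W}$, moment method via multiplicativity of $t(\cdot,W)$ over disjoint unions, and the unitriangular passage between $t$ and $t_{\mathrm{ind}}$); the additional equivalence with~(d) that you supply is indeed the straightforward unwinding you describe, since both sides live on a finite set of labeled graphs and the atoms match up to the harmless $n^{\underline\ell}/n^\ell$ correction.
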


For every $p\in[0,1]$, the article \cite{bassino2021random} introduces a random graphon $\mathbf{W}^{p}$ called the Brownian cographon which can be explicitly constructed as a function of a realization of a Brownian excursion. Besides, \cite[Proposition 5]{bassino2021random} states that the distribution of the Brownian cographon is characterized\footnote{This characterization is strongly linked to the remarkable property that $k$ uniform leaves in the CRT induce a uniform binary tree with $k$ leaves, see again \cite[Section 4.2]{bassino2021random}.} by the fact that for every $k\geq 2$,  $\mathrm{Sample}_k(\mathbf{W}^{p})$ has the same law as the unlabeled version of $\mathrm{Graph}(\mathbf{b}_k^p)$ with $\mathbf{b}_k^p$ a uniform labeled binary tree with $k$ leaves and i.i.d.~uniform decorations in $\{\oplus, \ominus \}$, such that the probability of an internal node being decorated with $\oplus$ is $p$.

A consequence of this characterization is a simple criterion for convergence to the Brownian cographon.

\begin{lem}[Generalization of \cite{bassino2021random} Lemma $4.4$]\label{111}
For every $n\geq 1$, let $\mathcal{T}_n$ be a subset of trees in $\mathcal{T}_0$ with $n$ leaves.  For every positive integer $n$, let $\mathbf{T}^{(n)}$ be a uniform random tree in $\mathcal{T}_n$ with $n$ vertices.
	For every positive integer $\ell$, $\mathbf{\mathfrak{I}_\ell}^{(n)}$ be a uniform partial injection from $\{1,\dots,n\}$ to $\N$ whose image is $\{1,\dots, \ell\}$ and independent of $\mathbf{T}^{(n)}$. Denote by $\mathbf{T}_{\mathbf{\mathfrak{I}_\ell}^{(n)}}^{(n)}$ the subtree induced by $\mathbf{\mathfrak{I}_\ell}^{(n)}$.

	Suppose that for every $\ell$ and for every binary non-plane tree $\tau$ with $\ell$ leaves and having $V_+$ (resp.~$V_-$) internal nodes decorated with $\oplus, $ (resp.~$\ominus$), 
	\begin{equation}\mathbb{P}(\mathbf{T}_{\mathbf{\mathfrak{I}}^{(n)}}^{(n)}=\tau) \xrightarrow[n\to\infty]{} \frac{(\ell-1)!}{(2\ell-2)!}p^{V_+}(1-p)^{V_-}.\label{eq:factorisation}\end{equation}
	Then $W_{\mathrm{Graph}(\mathbf{T}^{(n)})}$ converges as a graphon to the Brownian cographon $\mathbf{W}^{p}$ of parameter $p$.
	
\end{lem}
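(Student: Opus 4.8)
The plan is to verify criterion (d) of \Cref{Portmanteau} for the random graph $\mathbf{G}^{(n)} := \mathrm{Graph}(\mathbf{T}^{(n)})$, after transferring the whole question from induced subgraphs to induced \emph{subtrees}. Fix $\ell\ge 1$; since $\mathbf{\mathfrak{I}}_\ell^{(n)}$ is independent of $\mathbf{T}^{(n)}$ (hence of $\mathbf{G}^{(n)}$), it suffices to show that the (unlabeled) subgraph of $\mathbf{G}^{(n)}$ induced by $\mathbf{\mathfrak{I}}_\ell^{(n)}$ converges in distribution to $\mathrm{Sample}_\ell(\mathbf{W}^{p})$.

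First I would establish that inducing a subgraph commutes with the encoding $\mathrm{Graph}$: for every $t\in\mathcal{T}_0$ and every partial injection $\mathfrak{I}$, the labeled subgraph of $\mathrm{Graph}(t)$ induced by $\mathfrak{I}$ in the sense of \Cref{d12} coincides with $\mathrm{Graph}(t_{\mathfrak{I}})$, where $t_{\mathfrak{I}}$ is the induced subtree (keep the selected leaves and their relevant ancestors, restrict each decoration to the surviving children, and reduce the result into $\mathcal{T}_0$). This is an induction on $t$ using \Cref{2.3} and \Cref{d2}; the delicate point is that a prime decoration restricted to a subset of its vertices need not be prime, so the reduction in the definition of $t_{\mathfrak{I}}$ is genuinely needed. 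Granting this, the subgraph of $\mathbf{G}^{(n)}$ induced by $\mathbf{\mathfrak{I}}_\ell^{(n)}$ is exactly $\mathrm{Graph}\big(\mathbf{T}^{(n)}_{\mathbf{\mathfrak{I}}_\ell^{(n)}}\big)$, so it remains to identify the limiting law of this random graph.

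Next I would invoke the characterization of the Brownian cographon recalled above: by \cite[Proposition~5]{bassino2021random}, $\mathrm{Sample}_\ell(\mathbf{W}^{p})$ has the law of the unlabeled graph $\mathrm{Graph}(\mathbf{b}_\ell^{p})$, and unwinding the definition of $\mathbf{b}_\ell^{p}$ one checks that for a binary tree $\tau$ with $\ell$ leaves and $V_+$ (resp.\ $V_-$) internal nodes decorated with $\oplus$ (resp.\ $\ominus$), $\mathbb{P}(\mathbf{b}_\ell^{p}=\tau)$ equals exactly the right-hand side of \eqref{eq:factorisation}. Summing that expression over all such $\tau$ yields $1$ (a count of binary trees with $\ell$ labeled leaves, combined with $(p+(1-p))^{\ell-1}=1$), so the hypothesis \eqref{eq:factorisation} says precisely that, in the countable discrete space of $\mathcal{T}_0$-trees with $\ell$ leaves, $\mathbf{T}^{(n)}_{\mathbf{\mathfrak{I}}_\ell^{(n)}}$ converges in distribution to $\mathbf{b}_\ell^{p}$; in particular the probability that $\mathbf{T}^{(n)}_{\mathbf{\mathfrak{I}}_\ell^{(n)}}$ carries a prime decoration or a non-binary linear node tends to $0$. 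Since $\mathrm{Graph}$ is a fixed (hence continuous, on a discrete space) map, $\mathrm{Graph}\big(\mathbf{T}^{(n)}_{\mathbf{\mathfrak{I}}_\ell^{(n)}}\big)$ converges in distribution to the unlabeled graph $\mathrm{Graph}(\mathbf{b}_\ell^{p})\overset{d}{=}\mathrm{Sample}_\ell(\mathbf{W}^{p})$, which is criterion (d); hence $W_{\mathrm{Graph}(\mathbf{T}^{(n)})}$ converges to $\mathbf{W}^{p}$.

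I expect the main obstacle to be the first step: making the operation $t\mapsto t_{\mathfrak{I}}$ fully precise — in particular the reduction forced by the fact that prime graphs are not closed under taking induced subgraphs — and verifying that it really computes the induced subgraph of $\mathrm{Graph}(t)$. Once this is in place the remainder is a transfer through a deterministic map together with the combinatorial normalization in \eqref{eq:factorisation}, both of which run just as in \cite[Lemma~4.4]{bassino2021random} for cographs; the only change is that the limiting tree $\mathbf{b}_\ell^{p}$ now carries $p$-biased decorations rather than the specific decorations appearing there, and that one must control the a priori presence of prime nodes in $\mathbf{T}^{(n)}_{\mathbf{\mathfrak{I}}_\ell^{(n)}}$, which the hypothesis \eqref{eq:factorisation} precisely rules out in the limit.
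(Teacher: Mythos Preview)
Your proposal is correct and follows essentially the same route as the paper's proof: identify the right-hand side of \eqref{eq:factorisation} as $\mathbb{P}(\mathbf{b}_\ell^{p}=\tau)$, deduce convergence of the induced subtree to $\mathbf{b}_\ell^{p}$, push through $\mathrm{Graph}$, and conclude via criterion~(d) of \Cref{Portmanteau}. Note that your ``first step'' (the commutation $\mathrm{Graph}(t)_{\mathfrak{I}}=\mathrm{Graph}(t_{\mathfrak{I}})$) is already available as \Cref{çasevoit}, so you need not re-derive it; with that in hand, the paper's proof is essentially your second paragraph compressed to three lines.
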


\begin{proof}
    The proof is very similar to the proof of Lemma $4.4$ in \cite{bassino2021random}. Let $\tau$ be a binary tree, $\ell$ its number of tree,  $V_+$ (resp.~$V_-$) its number of internal nodes decorated with $\oplus, $ (resp.~$\ominus$). Since there are $\frac{(2\ell-2)!}{((\ell-1)!}$ binary non-plane tree with $\ell$ leaves, $\mathbb{P}(b_k^{p}=\tau)= \frac{(\ell-1)!}{(2\ell-2)!}p^{V_+}(1-p)^{V_-}$. Thus we have the following convergence as random labeled graphs:

    $$\mathrm{Graph}(\mathbf{T}_{\mathbf{\mathfrak{I}}^{(n)}}^{(n)})\stackrel{(d)}{\longrightarrow}\mathrm{Graph}(b_k^{p})\ \overset{\text{(d)}}{{=}}\ \mathrm{Sample}_k(\mathbf{W}^p).$$

    After forgetting the labels, we get that criterion $(d)$ in \cref{Portmanteau} is verified for $\left(\mathrm{Graph}(\mathbf{T}^{(n)})\right)_n$, which converges to $\mathbf{W}^{p}$.
\end{proof}

\section{Graphs in $\mathcal{G}_{\mathcal{P}}$: enumerative results}\label{sec4}
The aim of this section is to compute several exponential generating series of graph classes which will be used in \cref{sec5}.
\subsection{Exact enumeration}\label{sec:exa}

Throughout this section, we consider a fixed set $\mathcal{P}$ of prime graphs stable by relabeling. Let $P(z)=\sum\limits_{G\in\mathcal{P}}\frac{z^{|G|}}{|G|!}$ be the exponential generating function associated to a given set $\mathcal{P}$.

\begin{dfn}
    Let $\mathcal{T}_{\mathcal{P}}$ be the set of modular decomposition trees whose node are either linear or in $\mathcal{P}$. Let $\mathcal{G}_{\mathcal{P}}$ be the set of graphs whose modular decomposition tree is in $\mathcal{T}_{\mathcal{P}}$.
\end{dfn}

For $n\in \N$, let $\mathcal{P}_n$ be the set of graphs $G$ in $\mathcal{P}$ of size $n$.

Let $T$ be the exponential generating function of $\mathcal{T}_{\mathcal{P}}$ counted by their number of leaves. Denote by $\mathcal{T}_{\mathrm{not}\oplus}$ the set of all $t\in \mathcal{T}_{\mathcal{P}}$ whose root is not decorated with $\oplus$ (resp.~$\ominus$) and by $T_{\mathrm{not}\oplus}$ (resp.~$T_{\mathrm{not}\ominus}$) the corresponding exponential generating function. (Regarding generating functions, we skip the dependence on $\mathcal{P}$.) Throughout section $2.1$, all generating functions are considered as formal power series.

\begin{thm}

The exponential generating function $T_{\mathrm{not}\oplus}$ verifies the following equation:
\begin{align}
  \label{ecu123}
T_{\mathrm{not}\oplus}=&z+P(\exp(T_{\mathrm{not}\oplus})-1)+\exp(T_{\mathrm{not}\oplus})-1-T_{\mathrm{not}\oplus},
\end{align}
and the series $T$ and $T_{\mathrm{not}\ominus}$ are simply given by the following equations:
\begin{align}
  \label{ecu124}
&T=\exp(T_{\mathrm{not}\oplus})-1\\
    \label{ecutri}
 &T_{\mathrm{not}\ominus}=T_{\mathrm{not}\oplus}
\end{align}

Moreover, \cref{ecu123} with $T_{\mathrm{not}\oplus}(0)=0$ determines (as a formal series) uniquely the generating function $T_{\mathrm{not}\oplus}$.
\end{thm}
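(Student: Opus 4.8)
The plan is to establish the three equations by a structural decomposition of modular decomposition trees, and then prove uniqueness by the standard formal-power-series fixed-point argument.

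\textbf{Setting up the decomposition.} First I would recall that every tree $t\in\mathcal{T}_{\mathcal{P}}$ is either a single leaf, or has a root decorated with $\oplus$, $\ominus$, or a prime graph $P\in\mathcal{P}$; and that by the defining property of modular decomposition trees, the children of an $\oplus$-node are not $\oplus$-nodes and the children of a $\ominus$-node are not $\ominus$-nodes. A tree in $\mathcal{T}_{\mathrm{not}\oplus}$ is thus: (i) a single leaf, contributing $z$; or (ii) rooted at a prime $P\in\mathcal{P}$ with $|P|$ children, each an arbitrary tree of $\mathcal{T}_{\mathcal{P}}$; or (iii) rooted at $\ominus$ with at least two children, each a tree in $\mathcal{T}_{\mathrm{not}\ominus}$. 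Since the trees are non-plane with distinctly labeled leaves, sequences of labeled sub-structures are handled by the exponential formula: a multiset of $k$ trees each counted by a series $A$ contributes $A^k/k!$ after relabeling, so an unordered collection of $\ge 2$ such trees contributes $\exp(A)-1-A$, and an ordered collection indexed by the $|P|$ vertices of a fixed prime $P$ — but the set $\mathcal{P}$ being stable by relabeling means summing $z^{|P|}/|P|!$ over $P\in\mathcal{P}$ with each $z$ replaced by $T$ gives $P(T)$... here I must be slightly careful: the children of a $P$-node are elements of $\mathcal{T}_{\mathcal{P}}$ whose generating function is $T$, and since $\mathcal{P}$ is stable by relabeling the decoration contributes $P(T)$. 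However equation \eqref{ecu123} has $P(\exp(T_{\mathrm{not}\oplus})-1)$, which by \eqref{ecu124} is exactly $P(T)$; similarly $\exp(T_{\mathrm{not}\oplus})-1-T_{\mathrm{not}\oplus}$ is $\exp(T_{\mathrm{not}\ominus})-1-T_{\mathrm{not}\ominus}$ by \eqref{ecutri}, which counts case (iii). So the three contributions sum to the right-hand side of \eqref{ecu123}.

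\textbf{The remaining two equations.} For \eqref{ecu124}: a tree in $\mathcal{T}_{\mathcal{P}}$ is either not rooted at $\oplus$ (series $T_{\mathrm{not}\oplus}$) or rooted at $\oplus$ with $\ge 2$ children each in $\mathcal{T}_{\mathrm{not}\oplus}$ (series $\exp(T_{\mathrm{not}\oplus})-1-T_{\mathrm{not}\oplus}$), and adding these gives $T=\exp(T_{\mathrm{not}\oplus})-1$. For \eqref{ecutri}: there is an involution on $\mathcal{T}_{\mathcal{P}}$ (recursively swap every $\oplus$ decoration with $\ominus$ and vice versa, and replace every prime decoration by its complement — using that $\mathcal{P}$ is stable under complementation, or more carefully under the relevant symmetry) which is a size-preserving bijection between $\mathcal{T}_{\mathrm{not}\oplus}$ and $\mathcal{T}_{\mathrm{not}\ominus}$; hence the two series coincide. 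Alternatively one derives \eqref{ecutri} directly by writing the analogous structural equation for $\mathcal{T}_{\mathrm{not}\ominus}$ and observing it is formally identical to \eqref{ecu123}.

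\textbf{Uniqueness.} Write \eqref{ecu123} as $T_{\mathrm{not}\oplus}=\Phi(T_{\mathrm{not}\oplus})$ where $\Phi(S)=z+P(\exp(S)-1)+\exp(S)-1-S$. I would argue by induction on $n$ that the coefficient $[z^n]T_{\mathrm{not}\oplus}$ is determined: the key point is that $\Phi(S)$ involves $S$ only through $P(\exp(S)-1)+\exp(S)-1-S$, whose lowest-order term in $S$ is quadratic (the linear terms cancel: $\exp(S)-1-S$ starts at $S^2/2$, and $P(\exp(S)-1)$ contributes to a given coefficient of $z^n$ only via lower-order coefficients of $S$ since $\mathcal{P}$ consists of prime graphs which have size $\ge 3$, so $P$ has no constant or linear term). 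Hence $[z^n]\Phi(S)$ depends only on $[z^m]S$ for $m<n$ (after using $[z^1]T_{\mathrm{not}\oplus}=1$ from the leaf term), so the recursion determines all coefficients uniquely given $T_{\mathrm{not}\oplus}(0)=0$.

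\textbf{Main obstacle.} The only genuinely delicate point is the bookkeeping in the exponential generating function formalism for non-plane labeled trees — making precise that an unordered family of subtrees with disjoint label sets, after reduction, corresponds exactly to the $\exp$ operation, and that a prime-decorated node with its $|P|$ ordered children combined with the sum over the relabeling-closed class $\mathcal{P}$ produces $P(T)$ rather than something off by a symmetry factor. I expect this is where care is needed; the rest is routine.
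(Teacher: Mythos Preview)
Your argument is essentially the same as the paper's and is correct in all but one place: the involution you propose for \eqref{ecutri} is more than is needed and, as stated, relies on an assumption not present in the hypotheses. You say to swap $\oplus\leftrightarrow\ominus$ at every linear node \emph{and} replace every prime decoration by its complement, ``using that $\mathcal{P}$ is stable under complementation''. But $\mathcal{P}$ is only assumed to be stable under relabeling, not under taking complements; with your map the resulting tree need not lie in $\mathcal{T}_{\mathcal{P}}$ at all.

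The fix is simply to drop the complementation step: swapping $\oplus\leftrightarrow\ominus$ at every linear node and leaving prime decorations untouched already gives a size-preserving involution on $\mathcal{T}_{\mathcal{P}}$. It preserves the ``no two consecutive linear nodes with the same decoration'' constraint and keeps every prime decoration in $\mathcal{P}$, while sending a root that is not $\oplus$ to a root that is not $\ominus$. This is exactly what the paper does. Your alternative route (write the analogous equation for $T_{\mathrm{not}\ominus}$ and invoke the uniqueness part) also works, but the simple involution is cleaner. Apart from this, your decomposition into cases, the derivation of \eqref{ecu124}, and the uniqueness argument all match the paper's proof.
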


\begin{proof}
Note that there is a natural involution on $\mathcal{T}_{\mathcal{P}}$: the decoration of every linear node can be changed to its opposite: $\oplus$ to $\ominus$, and $\ominus$ to $\oplus$. Therefore $T_{\mathrm{not}\oplus}=T_{\mathrm{not}\ominus}$.

First, we prove that
\begin{align}
  \label{ecu126}
T_{\mathrm{not}\oplus}&=z+P(T)+(\exp(T_{\mathrm{not}\oplus})-1-T_{\mathrm{not}\oplus}).
\end{align}

We split the enumeration of the trees $t\in \mathcal{T}_{\mathrm{not}\oplus}$ according to the different possible cases.

\begin{itemize}
    \item The tree $t$ is a single leaf (which gives the $z$ in \cref{ecu126}).
    
    \item  The tree $t$ has a root decorated with a graph $H$ belonging to $\mathcal{P}$. The exponential generating function for a fixed $H$ is $\frac{T^{|H|}}{|H|!}$. Summing over all $H$ and all $n$ gives the term $P(T)$ in \cref{ecu126}.
   
   \item The tree $t$ has a root $r$ decorated with $\ominus$ and having $k$ children with $k\geq 2$. In this case, the generating function of the set of the $k$ subtrees of $t_r$ is $\frac{T_{\mathrm{not}\oplus}^k}{k!}$. Summing over all $k$ implies that the exponential generating function of all trees in case $(D3)$ with a root decorated with $\oplus$ is $\exp(T_{\mathrm{not}\oplus})-1-T_{\mathrm{not}\oplus}$.

\end{itemize}

Summing all terms gives \cref{ecu126}.

We split the enumeration of the trees $t\in \mathcal{T}_{\mathcal{P}}$ according to the different possible cases. 

\begin{itemize}
    \item The root is not decorated with $\oplus$: the exponential generating function is $T_{\mathrm{not}\oplus}$
    \item The root is decorated with $\oplus$. The exponential generating function is $\exp(T_{\mathrm{not}\oplus})-1-T_{\mathrm{not}\oplus}$
\end{itemize}

Summing gives \cref{ecu124}. Then \cref{ecu123} is an easy consequence from \cref{ecu124,ecu126}.\smallskip

Note that \cref{ecu123} can be rewritten as:
\begin{align}
  \label{eq:equation55555}
T_{\mathrm{not}\oplus}&=z+\sum\limits_{k\geq 3}|\mathcal{P}_k|\left(\sum\limits_{\ell \geq 1}T_{\mathrm{not}\oplus}^{\ell}\right)^k+\sum\limits_{k\geq 2}\frac{T_{\mathrm{not}\oplus}^k}{k!}.
\end{align}

For every $n\geq 1$, the coefficient of degree $n$ of $T_{\mathrm{not}\oplus}$ only depends on coefficients of lower degree as $T_{\mathrm{not}\oplus}(0)=0$. Thus \cref{ecu123} combined with $T_{\mathrm{not}\oplus}(0)=0$ determines uniquely $T_{\mathrm{not}\oplus}$.\end{proof}

We are going to define the notions of trees with marked leaves, and of blossomed trees, which will be crucial in the next section. We insist on the fact that the size parameter counts the number of leaves \textbf{including the marked ones but not the blossoms}.

\begin{dfn}\label{mark}
A \emph{marked tree} is a pair $(t,\mathfrak{I})$ where $t$ is a tree and $\mathfrak{I}$ a partial injection from the set of labels of leaves of $t$ to $\N$. The number of marked leaves is the size of the domain of $\mathfrak{I}$ denoted by $|(t,\mathfrak{I})|$, and a leaf is marked if its label $j$ is in the domain, its mark being $\mathfrak{I}(j)$.
\end{dfn}

\begin{rem}
In the following, we consider marked trees $(t,\mathfrak{I})$, and subtrees $t'$ of $t$. The marked tree $(t',\mathfrak{I})$ refers to the marked tree $(t',\mathfrak{I}')$ where $\mathfrak{I}'$ is the restriction of $\mathfrak{I}$ to the set of labels of leaves of $t'$.
\end{rem}

\begin{rem}
Let $\mathcal{F}\in\{\mathcal{T}_{\mathcal{P}},\mathcal{T}_{\mathrm{not}\ominus}\}$, and $F$ be its generating exponential function. The exponential generating function of trees in $\mathcal{F}$ with a marked leaf is $zF'(z)$: if there are $f_n$ trees of size $n$ in $\mathcal{F}$, there are $nf_n$ trees with a marked leaf. Thus the generating exponential function is $\sum \limits_{n\geq 1}\frac{n f_n}{n!}z^{n}=zF'(z)$.
\end{rem}

\subsubsection*{Blossoming transformation}

Let $t$ be a tree in $\mathcal{T}_{\mathcal{P}}$and  $\ell$ a leaf of $t$. We introduce an operation called the \emph{blossoming} of $(t,\ell)$ as follows:
\begin{itemize}
\item For every non-linear node $\mathfrak{n}$ belonging to the shortest path between $\ell$ and the root, let $i$ be the only integer such that $\ell$ is in the $i$-th tree of $t_{\mathfrak{n}}$. We replace the label $i$ of the decoration of $\mathfrak{n}$ by $*$, and do the reduction on the decoration of $\mathfrak{n}$; 
\item We replace the label of $\ell$ by $*$ and do the reduction on $t$.
\end{itemize}

We extend this operation to internal node: if $\mathfrak{n}$ is a internal node, we replace $t[\mathfrak{n}]$ by its leaf of smallest label, and do the blossoming operation on the tree obtained. The resulting tree is still called the \emph{blossoming} of $t$ at $\mathfrak{n}$ (see example in \cref{transfo}.

\begin{figure}
\begin{center}
\centering
\includegraphics[scale=0.7]{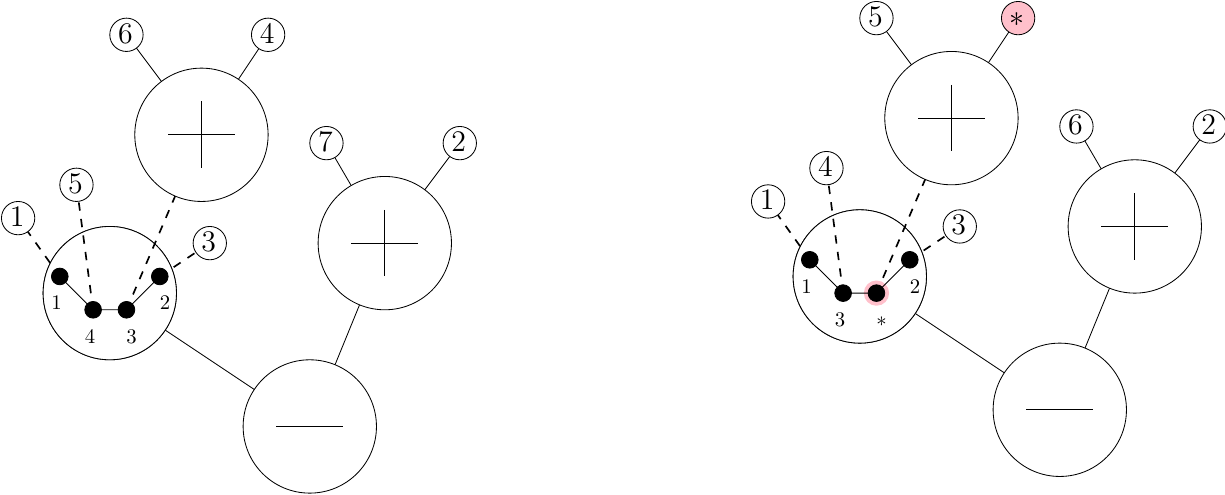}\label{transfo}
\caption{Left: a tree $t\in\mathcal{T}_{\mathcal{P}}$. Right: the blossoming of $t$ at the leaf of label $4$.}
\end{center}
\end{figure}

\begin{dfn}\label{dfnblossom}

A \emph{blossomed tree} is a tree that can be obtained by the blossoming of a tree in $\mathcal{T}_{\mathcal{P}}$. Its size is its number of leaves without blossom. 

A blossom is \emph{$\oplus$-replaceable} (resp.~\emph{$\ominus$-replaceable}) if its parent is not decorated with $\oplus$ (resp.~$\ominus$).

\end{dfn}

\begin{rem}
Similarly to a tree, a blossomed tree can be marked by a partial injection $\mathfrak{I}$.
\end{rem}

We denote $\mathcal{T}^b$ and $\mathcal{T}_a^b$ with $a\in\{\mathrm{not}\oplus,\mathrm{not}\ominus\}$, and $b\in\{\oplus,\ominus,\mathsf{blo}\}$ the set of trees whose root is not $\oplus$ (resp.~$\ominus$) if $a=\mathrm{not}\oplus$ (resp.~$a=\mathrm{not}\ominus$), and with one blossom that is $b$-replaceable if $b=\oplus$ or $\ominus$, or just with one blossom if $b=\mathsf{blo}$.

We define $T^b$ and $T_a^b$ to be the corresponding exponential generating functions of trees, counted by the number of non blossomed leaves.

However, we take the convention that $T_{\mathrm{not}\oplus}^{\oplus}(0)=0=T_{\mathrm{not}\ominus}^{\ominus}$. In other words, a single leaf is neither in $\mathcal{T}_{\mathrm{not}\oplus}^{\oplus}$ nor in $\mathcal{T}_{\mathrm{not}\ominus}^{\ominus}$. The other series have constant coefficient $1$.

\begin{rem}
From the previously defined involution, it follows that $T_{\mathrm{not}\oplus}^{\ominus}=T_{\mathrm{not}\ominus}^{\oplus}$, $T_{\mathrm{not}\oplus}^{\oplus}=T_{\mathrm{not}\ominus}^{\ominus}$ et $T^{\oplus}=T^{\ominus}$ and $T_{\mathrm{not}\oplus}^{\mathrm{blo}}=T_{\mathrm{not}\ominus}^{\mathrm{blo}}$.
\end{rem}

\begin{prop}\label{4.4}
The functions $T^{\mathrm{blo}}, T_{\mathrm{not}\oplus}^{\mathrm{blo}}$ are given by the following equations:

\begin{align}
\label{o1}
&T^{\mathrm{blo}}=T'\\
\label{o2}
&T_{\mathrm{not}\oplus}^{\mathrm{blo}}=T_{\mathrm{not}\oplus}'=T'\exp(-T_{\mathrm{not}\oplus})
\end{align}

\end{prop}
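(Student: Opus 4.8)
The plan is to prove the two identities of \cref{4.4} by a bijective/combinatorial analysis of the blossoming operation, matched against the differentiation of generating functions. Recall that differentiating an exponential generating function with respect to $z$ corresponds, combinatorially, to marking (and then removing, in the blossom sense) one leaf: more precisely, if $F$ counts a family of trees by number of leaves, then $F'$ counts the same family with one distinguished leaf, the distinguished leaf no longer contributing to the size. The blossoming operation defined just above turns a tree $t\in\mathcal{T}_{\mathcal{P}}$ with a chosen leaf $\ell$ into a blossomed tree, and one should check this is a size-preserving bijection between (trees in $\mathcal{T}_{\mathcal{P}}$ with a marked leaf) and (blossomed trees in $\mathcal{T}^{\mathrm{blo}}$). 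Granting that, \cref{o1} is immediate: $T^{\mathrm{blo}}$ is exactly the generating function of trees in $\mathcal{T}_{\mathcal{P}}$ with one marked leaf, which is $T'$ (there is no extra factor of $z$ here because the convention is that the marked/blossomed leaf does not count towards the size).

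For the first equality in \cref{o2}, namely $T_{\mathrm{not}\oplus}^{\mathrm{blo}}=T_{\mathrm{not}\oplus}'$, I would run the same argument but restricted to trees whose root is not decorated with $\oplus$. The only subtlety is that the blossoming operation acts on the decorations along the path from $\ell$ to the root by replacing a child-index by $*$ and reducing; one must verify that this operation does not change whether the root is $\oplus$-decorated or not, so that the bijection descends to $\mathcal{T}_{\mathrm{not}\oplus}$ and its blossomed counterpart $\mathcal{T}_{\mathrm{not}\oplus}^{\mathrm{blo}}$. Since the root decoration is only modified by relabeling one of its arguments (a linear node stays linear, a prime node stays the same prime graph up to the fixed relabeling convention), this is routine.

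The substantive identity is the second equality $T_{\mathrm{not}\oplus}'=T'\exp(-T_{\mathrm{not}\oplus})$. The cleanest route is to differentiate the relation $T=\exp(T_{\mathrm{not}\oplus})-1$ from \cref{ecu124}: this gives
\begin{align*}
T'=\exp(T_{\mathrm{not}\oplus})\,T_{\mathrm{not}\oplus}',
\end{align*}
and since $\exp(T_{\mathrm{not}\oplus})=1+T$ is a power series with constant term $1$, it is invertible as a formal power series, so $T_{\mathrm{not}\oplus}'=T'\exp(-T_{\mathrm{not}\oplus})$. Combined with the first equality this finishes the proof. One could alternatively give a direct combinatorial derivation: a blossomed tree with root not $\oplus$ either is obtained from a $\oplus$-rooted blossomed tree by "capping" (and conversely), which is the source of the factor $\exp(-T_{\mathrm{not}\oplus})$ accounting for stripping off a possible $\oplus$-root and its non-$\oplus$-rooted siblings; but differentiating \cref{ecu124} is shorter and rigorous at the level of formal power series.

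The main obstacle is conceptual rather than computational: one must be careful that the blossoming operation is genuinely a bijection (in particular that it is reversible — from a blossomed tree one recovers the position of the blossom and hence the original marked leaf, using the reduction convention on decorations and leaf labels), and that the convention $T_{\mathrm{not}\oplus}^{\oplus}(0)=0$ and "the other series have constant coefficient $1$" is consistent with reading off $T'$ and $T_{\mathrm{not}\oplus}'$ as generating functions of marked trees. Once the bijection and these bookkeeping conventions are pinned down, the algebraic identities follow immediately from differentiating \cref{ecu124}.
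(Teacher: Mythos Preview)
Your overall strategy matches the paper's: identify $T^{\mathrm{blo}}$ and $T_{\mathrm{not}\oplus}^{\mathrm{blo}}$ with derivatives via the blossoming operation, then get the last equality of \cref{o2} by differentiating \cref{ecu124}. The latter step is exactly what the paper does.

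However, the justification of $T^{\mathrm{blo}}=T'$ has a real gap. You claim that blossoming is a \emph{bijection} between pairs (tree in $\mathcal{T}_{\mathcal{P}}$, marked leaf) and blossomed trees. It is not: it is $n$-to-$1$. From a blossomed tree with $n-1$ ordinary leaves and one blossom, there is no way to recover which label in $\{1,\dots,n\}$ the blossom originally carried before reduction; each of the $n$ choices yields a valid preimage, and checking validity is precisely where the hypothesis that $\mathcal{P}$ is stable by relabeling enters (so that un-reducing the decorations along the path still gives graphs in $\mathcal{P}$). The paper's argument is exactly this count: $nf_n$ marked trees of size $n$, an $n$-to-$1$ blossoming, hence $f_n$ blossomed trees of size $n-1$, giving $T^{\mathrm{blo}}=\sum_{n\ge 1} f_n z^{n-1}/(n-1)! = T'$.

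This is tied to an imprecision in your opening ``recall'': merely marking a leaf and declaring it not to contribute to the size produces $nf_n$ objects at size $n-1$, whose EGF is not $F'$. The correct species-theoretic reading of $F'$ is $F$-structures on a label set with one additional \emph{unlabeled} atom, which is what the $n$-to-$1$ collapse of the blossoming map produces. Your two slips cancel numerically, but as written the argument is not a proof; fixing it amounts to reproducing the paper's $n$-to-$1$ count and invoking stability by relabeling.
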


\begin{proof}
Let $\mathcal{F}\in\{\mathcal{T}_{\mathcal{P}},\mathcal{T}_{\mathrm{not}\ominus}\}$, and $F$ be its generating exponential function. The exponential generating function of blossomed trees in $\mathcal{F}$ is $F'(z)$: if there are $f_n$ trees of size $n$ in $\mathcal{F}$, there are $nf_n$ trees with a marked leaf. Since $\mathcal{P}$ is stable by relabeling, every blossomed tree of size $n-1$ can be obtained from exactly $n$ trees with a marked leaf (after blossoming the marked leaf). Thus there are $f_n$ blossomed trees of size $n-1$, the generating exponential function is $\sum \limits_{n\geq 1}\frac{f_n}{(n-1)!}z^{n-1}=F'(z)$ which implies \cref{o1,o2} (the last equality of \cref{o2} comes from differentiating \cref{ecu124}).
\end{proof}

\begin{thm}\label{4.5}
The functions $T^{\oplus}, T_{\mathrm{not}\oplus}^{\oplus}, T_{\mathrm{not}\ominus}^{\oplus}$ are given by the following equations: 

\begin{align}
\label{b1}
&T^{\oplus}=T'\exp(-T_{\mathrm{not}\oplus})\\
\label{b2}
&T_{\mathrm{not}\oplus}^{\ominus}=\frac{1}{\exp(T_{\mathrm{not}\oplus})}T^{\oplus}\\
\label{b3}
&T_{\mathrm{not}\oplus}^{\oplus}=\frac{T^{\oplus}-1}{\exp(T_{\mathrm{not}\oplus})}
\end{align}

\end{thm}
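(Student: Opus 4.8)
The plan is to decompose a blossomed tree $t$ with one blossom according to the decoration of the root, mimicking the case analysis used to derive \cref{ecu123,ecu126}, but now tracking where the blossom lives and whether it is replaceable. Write $t$ for the blossomed tree and $\mathfrak{n}$ for the root. The blossom sits in exactly one of the subtrees hanging off $\mathfrak{n}$; call that distinguished subtree the \emph{marked} subtree. First I would treat $T^{\oplus}$: a tree in $\mathcal{T}^{\oplus}$ has a root not decorated with $\oplus$ (by the replaceability condition, since a blossom under an $\oplus$-root would not be $\oplus$-replaceable), and the blossom is $\oplus$-replaceable automatically. So $\mathcal{T}^{\oplus}=\mathcal{T}_{\mathrm{not}\oplus}^{\mathrm{blo}}$ as sets, whence $T^{\oplus}=T_{\mathrm{not}\oplus}^{\mathrm{blo}}=T'\exp(-T_{\mathrm{not}\oplus})$ by \cref{o2}; that gives \cref{b1} essentially for free. (One must check the constant-term convention: $T^{\oplus}(0)=1$ because a single blossom with no non-blossom leaf is allowed, matching $T_{\mathrm{not}\oplus}^{\mathrm{blo}}(0)=T'_{\mathrm{not}\oplus}(0)=1$.)

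Next, for \cref{b2} and \cref{b3}, I would decompose a blossomed tree $t$ whose root is not $\oplus$ according to whether the root is a leaf, decorated by a prime graph in $\mathcal{P}$, or decorated by $\ominus$. If the root is a single blossom: this contributes the constant $1$ to $T_{\mathrm{not}\oplus}^{\ominus}$ (a bare blossom has an absent, hence non-$\ominus$, parent) but $0$ to $T_{\mathrm{not}\oplus}^{\oplus}$ by the stated convention. If the root is decorated by a prime graph $H\in\mathcal{P}$: one child subtree carries the blossom (and since $H$ is prime, hence non-linear, the blossom there may be of any replaceability status, so it is an arbitrary element of $\mathcal{T}^{\mathrm{blo}}$), while the other $|H|-1$ children are ordinary trees in $\mathcal{T}_{\mathcal{P}}$; stability under relabeling lets me sum $\frac{|H|}{|H|!}T^{\mathrm{blo}}T^{|H|-1}$ over $H$, giving $P'(T)T^{\mathrm{blo}}$. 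If the root is decorated by $\ominus$: the marked subtree must be a blossomed tree whose root is not $\ominus$, i.e.\ an element of $\mathcal{T}_{\mathrm{not}\ominus}^{b'}$ with appropriate replaceability, and the remaining $k-1$ children are in $\mathcal{T}_{\mathrm{not}\oplus}$. Summing the unmarked children as $\exp(T_{\mathrm{not}\oplus})$ (or $\exp(T_{\mathrm{not}\oplus})-1$ depending on whether the root needs $\geq 2$ children), I get a term like $(\exp(T_{\mathrm{not}\oplus})-1)\cdot(\text{marked subtree series})$. Adding these contributions yields linear relations among $T^{\oplus},T^{\mathrm{blo}},T_{\mathrm{not}\oplus}^{\oplus},T_{\mathrm{not}\oplus}^{\ominus},T_{\mathrm{not}\ominus}^{\oplus}$, which together with the involution identities $T_{\mathrm{not}\oplus}^{\ominus}=T_{\mathrm{not}\ominus}^{\oplus}$ and \cref{o1} solve to \cref{b2,b3}. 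Concretely, I expect to find $T^{\mathrm{blo}}=T^{\oplus}+(\exp(T_{\mathrm{not}\oplus})-1-T_{\mathrm{not}\oplus})\cdot(\text{something})$ from decomposing at the root over $\{$not $\oplus$, $\oplus\}$, combined with $T^{\mathrm{blo}}=T'$, then back out \cref{b2} and \cref{b3} by comparing with the $\ominus$-rooted contribution.

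The main obstacle, and the step demanding the most care, is the bookkeeping of the replaceability conditions together with the constant-term conventions. When the blossom descends into a child of an $\ominus$-rooted tree, that child's root being $\ominus$ or not changes whether the blossom is $\ominus$-replaceable, so the marked subtree ranges over $\mathcal{T}_{\mathrm{not}\ominus}^{\mathrm{blo}}$, and I must be sure I am using $T_{\mathrm{not}\ominus}^{\mathrm{blo}}=T_{\mathrm{not}\oplus}^{\mathrm{blo}}$ (from the involution) correctly rather than $T^{\mathrm{blo}}$. Likewise the ``$-1$'' adjustments (root of a linear node needs at least two children; a bare blossom is excluded from $\mathcal{T}_{\mathrm{not}\oplus}^{\oplus}$) must be inserted in precisely the right places, or the final identities will be off by factors of $\exp(T_{\mathrm{not}\oplus})$ or additive constants. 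Once the decomposition is written correctly, solving the resulting linear system for $T_{\mathrm{not}\oplus}^{\oplus}$ and $T_{\mathrm{not}\oplus}^{\ominus}$ in terms of $T^{\oplus}$ and $\exp(T_{\mathrm{not}\oplus})$ is routine algebra and produces exactly \cref{b1,b2,b3}.
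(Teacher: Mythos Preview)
Your argument for \cref{b1} rests on the claim that $\mathcal{T}^{\oplus}=\mathcal{T}_{\mathrm{not}\oplus}^{\mathrm{blo}}$ as sets, justified by ``a blossom under an $\oplus$-root would not be $\oplus$-replaceable''. This is a genuine error: $\oplus$-replaceability is a condition on the \emph{parent} of the blossom, not on the root of the tree. A tree whose root is $\oplus$ but whose blossom sits deeper, with an $\ominus$- or prime-decorated parent, lies in $\mathcal{T}^{\oplus}$ but not in $\mathcal{T}_{\mathrm{not}\oplus}^{\mathrm{blo}}$; conversely a tree with $\ominus$ root whose blossom has an $\oplus$ parent lies in $\mathcal{T}_{\mathrm{not}\oplus}^{\mathrm{blo}}$ but not in $\mathcal{T}^{\oplus}$. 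The two generating functions do coincide, but not for the set-theoretic reason you give, so your derivation of \cref{b1} does not stand.

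The paper's route for \cref{b1} is different and worth noting: it decomposes an arbitrary blossomed tree (counted by $T'=T^{\mathrm{blo}}$) according to whether the blossom is $\oplus$-replaceable. If it is, that contributes $T^{\oplus}$. If not, the blossom's parent $\mathfrak{n}$ is decorated $\oplus$; one re-blossoms at $\mathfrak{n}$ and observes that the new blossom is necessarily $\oplus$-replaceable (since in a modular decomposition tree a $\oplus$ node cannot have a $\oplus$ parent). The discarded piece is a nonempty forest of $\mathcal{T}_{\mathrm{not}\oplus}$-trees, giving $T^{\oplus}(\exp(T_{\mathrm{not}\oplus})-1)$. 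Summing yields $T'=T^{\oplus}\exp(T_{\mathrm{not}\oplus})$. For \cref{b3} the paper then decomposes $\mathcal{T}^{\oplus}$ by the root decoration (single blossom; root not $\oplus$; root $\oplus$ with one distinguished $\mathcal{T}_{\mathrm{not}\oplus}^{\oplus}$-child and a nonempty forest of $\mathcal{T}_{\mathrm{not}\oplus}$-siblings), obtaining $T^{\oplus}=1+\exp(T_{\mathrm{not}\oplus})T_{\mathrm{not}\oplus}^{\oplus}$ directly; \cref{b2} is analogous. Your proposed route for \cref{b2},\cref{b3}---decomposing $T_{\mathrm{not}\oplus}^{\oplus}$ by root type into leaf, prime-root, and $\ominus$-root cases and introducing $P'(T)T^{\mathrm{blo}}$---can be made to work but is more circuitous; the paper avoids the prime-root term entirely by working with $T^{\oplus}$ on the left-hand side.
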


\begin{proof}

    First let's prove that $T'=T^{\oplus}\exp(T_{\mathrm{not}\oplus})$ using that $T'$ is the generating function of blossomed trees.
    \begin{figure}[htbp]
\begin{center}
\centering
\includegraphics[scale=0.6]{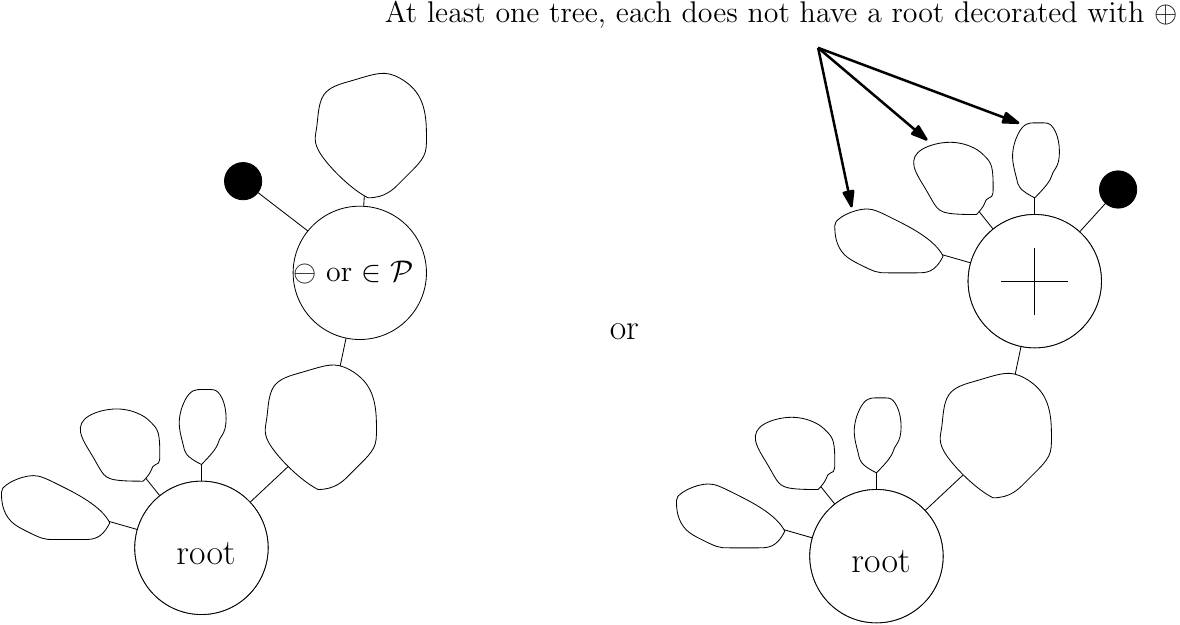}
\caption{Illustration of the proof of \cref{b1}.}
\end{center}
\end{figure}
    A blossomed tree $t$ is in exactly one of both cases:
    \begin{itemize}
        \item the blossom of $t$ is $\oplus$-replaceable, thus the exponential generating series is $T^{\oplus}$;
        \item the blossom of $t$ is not $\oplus$-replaceable, let $n$ be its parent. We define $t'$ to be $t$ blossomed at $n$. The exponential generating function of $t$ is the product of the exponential generating function of $t'$, which is $T^{\oplus}$ as the blossom must be $\oplus$ replaceable, and of the exponential generating function of $t_{n}$ (a forest of trees whose root is not decorated with $\oplus$, with at least one tree) which is $\exp(T_{\mathrm{not}\oplus})-1$. Thus the exponential generating function of blossomed trees whose blossom is not $\oplus$-replaceable is $T^{\oplus}(\exp(T_{\mathrm{not}\oplus})-1)$
    \end{itemize}

    Summing implies $T'=T^{\oplus}\exp(T_{\mathrm{not}\oplus})$ which is equivalent to \cref{b1}.

\begin{figure}[htbp]
\begin{center}
\centering
\includegraphics[scale=0.6]{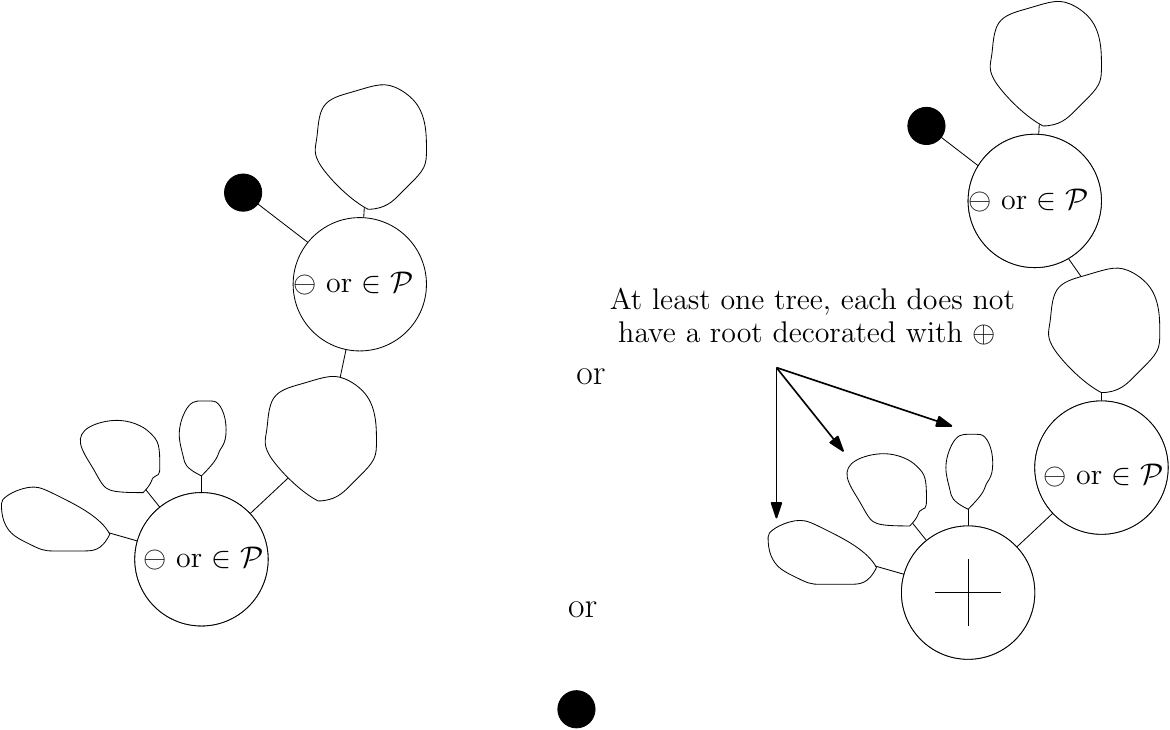}
\caption{Illustration of the proof of \cref{b2}.}
\end{center}
\end{figure}
Since the proofs of \cref{b2} and \cref{b3} are very similar, we only prove \cref{b3}. Let $t$ be a tree in $\mathcal{T}^{\oplus}$: 
\begin{itemize}
    \item The tree $t$ can be a blossom, whose exponential generating function is $1$;
    \item The tree $t$ can have a root not decorated with $\oplus$, the corresponding exponential generating function is $T_{\mathrm{not}\oplus}^{\oplus}$
    \item The tree $t$ can have a root $r$ decorated with $\oplus$ and having $k$ children with $k\geq 2$. There are $k-1$ subtrees without blossom, and $1$ with a blossom. Thus the generating function of the set of the $k$ subtrees of $t_r$ is $\frac{T_{\mathrm{not}\oplus}^{k-1}}{(k-1)!}T_{\mathrm{not}\oplus}^{\oplus}$. Summing over all $k$ gives that the exponential generating function of all trees in case $(D3)$ with a root decorated with $\ominus$ is $$\sum \limits_{k\geq 2} \frac{T_{\mathrm{not}\oplus}^{k-1}}{(k-1)!}T_{\mathrm{not}\oplus}^{\oplus}=(\exp(T_{\mathrm{not}\oplus})-1)T_{\mathrm{not}\oplus}^{\oplus}$$
\end{itemize}
Summing gives $T^{\oplus}=1+\exp(T_{\mathrm{not}\oplus})T_{\mathrm{not}\oplus}^{\oplus}$ which implies \cref{b3}.
\end{proof}

\subsection{Asymptotic enumeration}\label{sec4.2}
In the following, we derive from the previously obtained equations the radii of the different series introduced, the asymptotic behavior of the different series in $R$ and an equivalent of the number of graphs in $\mathcal{G}_{\mathcal{P}}$

From now on, we assume that $P$ has a positive radius of convergence $R_0$.
\label{pageP}
Denote by $P(R_0)$ the limit in $[0,+\infty]$ of $P$  at $R_0^-$ (which exists as $H$ has nonnegative coefficients and radius of convergence at least $R_0$). Define $\Lambda(w)=P(\exp(w)-1)+\exp(w)-1-w$

\begin{lem}
    The radius of convergence of $\Lambda$ is $R_{\Lambda}:=\log(1+R_0)$.
\end{lem}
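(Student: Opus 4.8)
The statement claims that the radius of convergence of $\Lambda(w) = P(\exp(w)-1) + \exp(w)-1-w$ equals $R_\Lambda := \log(1+R_0)$, where $R_0$ is the radius of convergence of $P$. The natural approach is to analyze the composition $w \mapsto \exp(w)-1$ followed by $P$, combined with the entire function $\exp(w)-1-w$. First I would note that $\exp(w)-1-w$ is entire, so it contributes nothing to limiting the radius of convergence; the whole game is about the term $P(\exp(w)-1)$. Since $P$ has nonnegative coefficients and radius of convergence $R_0$, and $w \mapsto \exp(w)-1$ is a power series with nonnegative coefficients that is increasing on $[0,\infty)$ with $\exp(0)-1 = 0$, the composite $P(\exp(w)-1)$ is a power series with nonnegative coefficients.

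The key step is then a standard fact about compositions of power series with nonnegative coefficients: if $g(w) = \exp(w)-1$ has nonnegative coefficients and $g(0) = 0$, and $P$ has radius of convergence $R_0 \in (0,\infty]$, then $P \circ g$ has radius of convergence equal to $\sup\{w \geq 0 : g(w) < R_0\}$ (interpreting this appropriately when $R_0 = \infty$, in which case $P\circ g$ is entire and $\log(1+R_0) = \infty$). Concretely, for $w \geq 0$ real, the series $P(\exp(w)-1)$ converges if and only if $\exp(w)-1 < R_0$ (or $= R_0$ when $P(R_0) < \infty$, but this boundary case does not affect the radius), i.e. if and only if $w < \log(1+R_0)$. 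By Pringsheim's theorem, since $P\circ g$ has nonnegative coefficients, its radius of convergence is exactly this critical real value $\log(1+R_0)$. I would spell this out: for $w < \log(1+R_0)$ we have $\exp(w)-1 < R_0$ so $P(\exp(w)-1)$ converges, giving $R_\Lambda \geq \log(1+R_0)$; conversely at $w = \log(1+R_0)$ (if finite) we have $\exp(w)-1 = R_0$, and one checks that $P(\exp(w)-1)$ evaluated slightly beyond, or a comparison of coefficients, forces divergence — more cleanly, if $\Lambda$ converged at some $w_1 > \log(1+R_0)$ then $P$ would converge at $\exp(w_1)-1 > R_0$, contradicting the definition of $R_0$.

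The main (minor) obstacle is handling the boundary/degenerate cases cleanly: the case $R_0 = +\infty$ (where both sides are $+\infty$ and $\Lambda$ is entire, which is immediate since $P$ is then entire and $\exp(w)-1$ is entire), and making the coefficient-comparison argument rigorous rather than hand-wavy — specifically, justifying that convergence of a nonnegative-coefficient power series on $[0, r)$ for all $r < \log(1+R_0)$ but divergence just beyond is genuinely equivalent to the radius being $\log(1+R_0)$, which is exactly Pringsheim's theorem applied to $\Lambda$ (valid since $\Lambda$ has nonnegative coefficients, as already observed in the excerpt). I would organize the proof as: (1) observe $\Lambda$ has nonnegative coefficients and reduce to the term $P(\exp(w)-1)$; (2) show $P(\exp(w)-1)$ converges for real $w \in [0, \log(1+R_0))$ via monotonicity of $\exp(w)-1$; (3) show it diverges for real $w > \log(1+R_0)$ since then $\exp(w)-1 > R_0$; (4) invoke Pringsheim to conclude $R_\Lambda = \log(1+R_0)$; (5) dispatch $R_0 = \infty$ separately.
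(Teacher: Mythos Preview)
Your proposal is correct and follows essentially the same approach as the paper: both isolate the entire piece $\exp(w)-1-w$, use the nonnegativity of coefficients, handle $R_0=+\infty$ separately, and obtain the lower bound $R_\Lambda \ge \log(1+R_0)$ by noting that $P$ is analytic on $[0,R_0)$. The only minor difference is in the upper bound: you argue directly on the real line (convergence of $\Lambda$ at a real $w_1>\log(1+R_0)$ would force, by nonnegativity and Tonelli, convergence of $P$ at $\exp(w_1)-1>R_0$), whereas the paper assumes $R_\Lambda>\log(1+R_0)$, inverts $w\mapsto \exp(w)-1$ locally to analytically continue $P$ past $R_0$, and then invokes Pringsheim for $P$; both variants are standard and yield the same conclusion.
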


\begin{proof}
$\Lambda$ has positive coefficients and no singularity in $[0,\log(1+R_0))$ as $P$ has no singularity in $[0,R_0)$, thus it has radius of convergence at least $\log(1+R_0)$. If $R_0=+\infty$, it implies the lemma. Otherwise, by contradiction assume that the radius of convergence of $\Lambda$ is greater than $\log(1+R_0)$. The same holds for $P(\exp(w)-1)$ and $w\mapsto \exp(w)-1$ can be inverted on a neighbourhood of $\log(1+R_0)$ as its derivative is non zero, thus $S$ can be extended to a neighbourhood of $R_0$, which contradicts Pringsheim's lemma \cite[Theorem IV.6, p. 240]{flajolet2009analytic}.
\end{proof}

Denote by $\Lambda'(R_{\lambda})$ the limit in $[0,+\infty]$ of $\Lambda'$ in $R_{\Lambda}^-$. In the rest of this paper, we assume that Condition (C) defined \pageref{condi} is verified. Note that conditon (C) can be written as:

\begin{cond}\hspace{3,85cm}$\Lambda'(R_{\Lambda})>1$
\ \end{cond}

Denote by $\kappa$ the only solution in $[0,R_{\Lambda})$  of the equation: \begin{align}
\label{eqimp}
    \Lambda'(\kappa)=1
    \end{align}

Let $K:=\exp(\kappa)-1$, the equation $\Lambda'(\kappa)=1$ can be rewritten as follows:
\begin{align}
\label{k}
(1+K)(P'(K)+1)=2
\end{align}

Recall that a formal series $A$ is \emph{aperiodic} if there does not exist two integers $r\geq 0$ and $d\geq 2$ and $B$ a formal series such that $A(z)=z^rB(z^d)$.
\begin{lem}
The functions $T$, $T_{\mathrm{not}\oplus}$, $T^{\oplus}$, $T_{\mathrm{not}\oplus}^{\ominus}$, $T_{\mathrm{not}\oplus}^{\oplus}$, $T^{\mathrm{blo}}$, $T_{\mathrm{not}\oplus}^{\mathrm{blo}}$ are aperiodic.
\end{lem}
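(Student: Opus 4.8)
The plan is to prove aperiodicity of $T_{\mathrm{not}\oplus}$ first, and then deduce it for all the other series by their functional/differential relations to $T_{\mathrm{not}\oplus}$. The cleanest route for $T_{\mathrm{not}\oplus}$ is to exhibit two monomials in its power-series expansion whose exponents are coprime, since if $A(z)=z^r B(z^d)$ then every monomial of $A$ has exponent congruent to $r$ modulo $d$, so in particular the gcd of the set of exponents with nonzero coefficient must be divisible by $d\ge 2$. Concretely, I would argue directly from \cref{eq:equation55555}: the coefficient of $z^1$ in $T_{\mathrm{not}\oplus}$ is $1$ (the single leaf), and the term $\sum_{k\ge 2} T_{\mathrm{not}\oplus}^k/k!$ contributes a nonzero coefficient to $z^2$ (indeed $[z^2]\frac{T_{\mathrm{not}\oplus}^2}{2!}=\frac12>0$, and all coefficients of $T_{\mathrm{not}\oplus}$ are nonnegative so there is no cancellation). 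Hence $[z^1]T_{\mathrm{not}\oplus}\ne 0$ and $[z^2]T_{\mathrm{not}\oplus}\ne 0$; since $\gcd(1,2)=1$, no factorization $z^r B(z^d)$ with $d\ge 2$ is possible, so $T_{\mathrm{not}\oplus}$ is aperiodic. By \cref{ecutri}, $T_{\mathrm{not}\ominus}=T_{\mathrm{not}\oplus}$ is aperiodic as well.

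\textbf{Propagating aperiodicity.} For $T$: from \cref{ecu124}, $T=\exp(T_{\mathrm{not}\oplus})-1$, so $T$ has nonnegative coefficients, $[z^1]T=[z^1]T_{\mathrm{not}\oplus}=1\ne 0$, and $[z^2]T=[z^2]T_{\mathrm{not}\oplus}+\frac12([z^1]T_{\mathrm{not}\oplus})^2>0$; again $\gcd(1,2)=1$ gives aperiodicity. The same elementary argument works for each remaining series once we check that its $z^1$-coefficient and $z^2$-coefficient are both nonzero: all the series $T^{\oplus}$, $T_{\mathrm{not}\oplus}^{\ominus}$, $T_{\mathrm{not}\oplus}^{\oplus}$, $T^{\mathrm{blo}}$, $T_{\mathrm{not}\oplus}^{\mathrm{blo}}$ have nonnegative coefficients (they count combinatorial objects), so it suffices to produce, in each case, one object of size $1$ and one object of size $2$. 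Alternatively, and perhaps more uniformly, one can use the closed forms in \cref{4.4,4.5}: $T^{\mathrm{blo}}=T'$ and $T_{\mathrm{not}\oplus}^{\mathrm{blo}}=T_{\mathrm{not}\oplus}'$, and differentiation sends $z^r B(z^d)$ to $r z^{r-1}B(z^d) + d z^{r+d-1}B'(z^d)$, whose nonzero monomials all lie in a single class modulo $d$; so if $A$ is aperiodic then a short check shows $A'$ is aperiodic too (its low-order coefficients are those of $A$ shifted, and $\gcd$ of the shifted exponent set is still $1$). For $T^{\oplus}=T'\exp(-T_{\mathrm{not}\oplus})$ and the two identities \cref{b2,b3} expressing $T_{\mathrm{not}\oplus}^{\ominus}$ and $T_{\mathrm{not}\oplus}^{\oplus}$ in terms of $T^{\oplus}$, $\exp(T_{\mathrm{not}\oplus})$, I would simply extract the coefficients of $z^1$ and $z^2$ by hand and observe they are nonzero; the constant-term conventions ($T^{\oplus}$ has constant term $1$, $T_{\mathrm{not}\oplus}^{\oplus}$ has constant term $0$) are the only subtlety, and they do not affect the coprimality of the low-degree exponents.

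\textbf{Main obstacle.} The only real point of care is bookkeeping: making sure that for each of the seven series the relevant low-degree coefficients genuinely do not vanish, and in particular that the blossomed series with their special constant-term conventions ($T_{\mathrm{not}\oplus}^{\oplus}(0)=0$, others $=1$) still have a nonzero linear and a nonzero quadratic coefficient. Since all these series have nonnegative coefficients, there is never any cancellation, so this reduces to exhibiting a small explicit object of size $1$ and one of size $2$ in each family — a finite check. No analytic input (radius of convergence, singularity analysis) is needed here; aperiodicity is a purely formal statement about the supports of the power series, and the $\gcd(1,2)=1$ observation does all the work.
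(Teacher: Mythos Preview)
Your approach is essentially the paper's own: exhibit two coprime exponents with nonzero coefficient in each series. The paper does this in one line, checking that the coefficients of degree $3$ and $4$ are positive for all seven series.

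There is, however, a genuine slip in your execution. Your claim that the coefficients of $z^1$ and $z^2$ are nonzero in every case fails for $T_{\mathrm{not}\oplus}^{\ominus}$. Combinatorially: a tree in $\mathcal{T}_{\mathrm{not}\oplus}^{\ominus}$ of size $1$ would have a root with two children (the blossom and one labeled leaf); the root cannot be $\oplus$, and since prime graphs have at least four vertices it cannot be prime either, so the root must be $\ominus$. But then the blossom's parent is $\ominus$, so the blossom is not $\ominus$-replaceable. Hence there is no object of size $1$ and $[z^1]T_{\mathrm{not}\oplus}^{\ominus}=0$. (Algebraically: $T_{\mathrm{not}\oplus}^{\ominus}=T'\exp(-2T_{\mathrm{not}\oplus})$, and $[z^1]$ of this is $2\cdot 1 + 1\cdot(-2)=0$.) So your ``finite check'' does not go through as stated.

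The fix is immediate: use degrees $3$ and $4$ (or $2$ and $3$) instead of $1$ and $2$, as the paper does. Since $\gcd(3,4)=1$, nonzero coefficients at those degrees force $d\mid 1$, and these coefficients are indeed positive for all seven series (easy to verify either by direct coefficient extraction from \cref{b1,b2,b3,o1,o2} or by exhibiting small trees in each class). Nothing else in your write-up needs to change.
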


\begin{proof}

One can easily check that for each of the previous series, the coefficients of degree $3$ and $4$ are positive, and thus all the series are aperiodic.\end{proof}

\begin{dfn}
A set $\Delta$ is a $\Delta$-domain at $1$ if there exist two positive numbers $R$ and $\frac{\pi}{2}<\phi< \pi$ such that 

$$\Delta=\{z\in\C | |z|\leq R, z\neq 1, |\mathrm{arg}(1-z)|< \phi\}$$

For every $w\in \C^*$, a set is a $\Delta$-domain at $w$ if it is the image of a $\Delta$-domain by the mapping $z\mapsto zw$.
\end{dfn}

\begin{dfn}
A power series $U$ is said to be \emph{$\Delta$-analytic} if it has a positive radius of convergence $\rho$ and there exists a $\Delta$-domain $D$ at $\rho$ such that $U$ has an analytic continuation on $D$.
\end{dfn}

\begin{prop}\label{thm5}
Under Condition (C) defined p.\pageref{condi}, both $T$ and $T_{\mathrm{not}\oplus}$ have $R:=\kappa-\Lambda(\kappa)$ as radius of convergence and a unique dominant singularity at $R$. They are $\Delta$-analytic. Their asymptotic expansions near $R$ are: 

\begin{align}
&T_{\mathrm{not}\oplus}(z)=\kappa-\frac{2R}{\mu} \sqrt{1-\frac{z}{R}}+O\left(1-\dfrac{z}{R}\right)\\
\label{ecut}
&T(z)=K-(1+K)\frac{2R}{\mu}\sqrt{1-\frac{z}{R}}+O\left(1-\dfrac{z}{R}\right)\\
\label{toplus}
&T^{\oplus}=\frac{1}{\mu}\left(1-\frac{z}{R}\right)^{-\frac{1}{2}}+O\left(1\right)
\end{align}

where $\mu$ and $K$ are the constants given by:
\begin{align}\label{mu}
    \mu&=\sqrt{2R\Lambda''(\kappa)}\\
    \label{K}K&=\exp(\kappa)-1
\end{align}
\end{prop}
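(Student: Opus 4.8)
The heart of the matter is \cref{ecu123}, which we rewrite as $T_{\mathrm{not}\oplus}=z+\Lambda(T_{\mathrm{not}\oplus})$. This is a smooth implicit-function problem of the type handled by the classical smooth implicit-function schema in analytic combinatorics (Flajolet--Sedgewick, \cite{flajolet2009analytic}), so the natural strategy is to locate the \emph{critical point} $\kappa$ where the ``characteristic system'' degenerates and then read off the square-root singularity. First I would set $y=T_{\mathrm{not}\oplus}(z)$ and consider the equation $G(z,y):=z+\Lambda(y)-y=0$. The smooth-schema conditions require a positive solution pair $(R,\kappa)$ inside the domain of analyticity of $\Lambda$ with $G(R,\kappa)=0$ and $\partial_y G(R,\kappa)=0$, i.e. $\Lambda'(\kappa)=1$. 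Condition (C), namely $\Lambda'(R_\Lambda)>1$, together with $\Lambda'(0)=0$ (since $\Lambda$ has no linear term: $\Lambda(w)=P(\mathrm{e}^w-1)+\mathrm{e}^w-1-w$ starts at order $\geq 2$) and the fact that $\Lambda'$ is continuous and strictly increasing on $[0,R_\Lambda)$ (positive coefficients), guarantees a unique $\kappa\in(0,R_\Lambda)$ with $\Lambda'(\kappa)=1$; this is exactly \cref{eqimp}. Then $R:=\kappa-\Lambda(\kappa)$ is forced by $G(R,\kappa)=0$, and one checks $R>0$ because $\Lambda(\kappa)<\kappa\Lambda'(\kappa)=\kappa$ by strict convexity of $\Lambda$ (again all coefficients of $\Lambda''$ nonnegative and not all zero).

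\textbf{Key steps.} (1) \emph{Radius of convergence.} Show $T_{\mathrm{not}\oplus}$ has radius exactly $R$: it is the unique formal power-series solution of $G(z,y)=0$ with $y(0)=0$ by the last assertion of the quoted theorem; analytic continuation exists for $|z|<R$ by the analytic implicit function theorem since $\partial_y G=\Lambda'(y)-1\neq 0$ as long as $y<\kappa$, which holds on the real segment $[0,R)$, and by Pringsheim plus positivity of coefficients this real dominant behavior governs the whole disk. At $z=R$ we have $y\to\kappa$ and $\partial_y G\to 0$, producing the singularity. (2) \emph{Square-root expansion.} Near $(R,\kappa)$ expand $G(z,y)=0$: writing $G(z,y)=(z-R)\,\partial_z G(R,\kappa)+\tfrac12(y-\kappa)^2\partial_{yy}G(R,\kappa)+\cdots$ with $\partial_z G=1$ and $\partial_{yy}G(R,\kappa)=\Lambda''(\kappa)$, one gets $(y-\kappa)^2\sim -\tfrac{2(z-R)}{\Lambda''(\kappa)}=\tfrac{2R}{\Lambda''(\kappa)}\bigl(1-\tfrac zR\bigr)$, hence
\begin{align*}
T_{\mathrm{not}\oplus}(z)=\kappa-\sqrt{\tfrac{2R}{\Lambda''(\kappa)}}\sqrt{1-\tfrac zR}+O\!\left(1-\tfrac zR\right),
\end{align*}
which matches the claimed form once we set $\mu=\sqrt{2R\Lambda''(\kappa)}$, since $\sqrt{2R/\Lambda''(\kappa)}=2R/\mu$. (3) \emph{$\Delta$-analyticity and uniqueness of the dominant singularity.} Aperiodicity of $T_{\mathrm{not}\oplus}$ (already proved in the excerpt) rules out other singularities on $|z|=R$, and the standard argument (solving for $y$ via the implicit function theorem off the real axis in a slit neighborhood of $R$) gives an analytic continuation to a $\Delta$-domain; this is exactly the conclusion of the smooth implicit-function schema. (4) \emph{Transfer to $T$ and $T^{\oplus}$.} Use \cref{ecu124}: $T=\exp(T_{\mathrm{not}\oplus})-1$, so $T(z)=\mathrm{e}^{\kappa}\exp\!\bigl(-\tfrac{2R}{\mu}\sqrt{1-z/R}+O(1-z/R)\bigr)-1=K-(1+K)\tfrac{2R}{\mu}\sqrt{1-z/R}+O(1-z/R)$ with $K=\mathrm{e}^\kappa-1$, giving \cref{ecut}. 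For \cref{toplus}, differentiate \cref{ecu124} or use \cref{b1}: $T^{\oplus}=T'\exp(-T_{\mathrm{not}\oplus})$; differentiating the square-root expansion of $T_{\mathrm{not}\oplus}$ produces $T_{\mathrm{not}\oplus}'(z)\sim \tfrac{1}{\mu}(1-z/R)^{-1/2}\cdot\tfrac1R\cdot R=\tfrac{1}{\mu}(1-z/R)^{-1/2}$ up to $O(1)$ — more precisely $T'=(1+K)T_{\mathrm{not}\oplus}'$ near $R$ while $\exp(-T_{\mathrm{not}\oplus})\to 1/(1+K)$, so $T^{\oplus}=T'\exp(-T_{\mathrm{not}\oplus})\sim \tfrac1\mu(1-z/R)^{-1/2}+O(1)$; differentiation of asymptotic expansions in a $\Delta$-domain is legitimate by the transfer theorems.

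\textbf{Main obstacle.} The routine-but-delicate point is verifying that the \emph{smooth implicit-function schema applies}, i.e. that $\kappa$ lies strictly inside the domain where $\Lambda$ is analytic, so that $\Lambda''(\kappa)$ is finite and the Taylor expansion in step (2) is valid; this is precisely what Condition (C) is designed to ensure. If one only had $\Lambda'(R_\Lambda)=1$ with equality, or $\Lambda'(R_\Lambda)<1$, the critical point would sit at the boundary $R_\Lambda$ or not exist, and the singularity of $T_{\mathrm{not}\oplus}$ would instead be inherited from that of $\Lambda$ (a different, non-universal exponent) — this is the alternative behavior alluded to after \cref{cgbintro}. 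So the argument must invoke $\Lambda'(R_\Lambda)>1$ to place $\kappa<R_\Lambda$, and then continuity and strict monotonicity of $\Lambda'$ do the rest. A secondary technical check is that the candidate singularity $R=\kappa-\Lambda(\kappa)$ is indeed smaller than $R_\Lambda$ as a value of $z$ (so that we never leave the analyticity domain of the composed functions on $[0,R)$); this follows from the a priori bound $T_{\mathrm{not}\oplus}(z)<\kappa$ on $[0,R)$ combined with $R<\kappa<R_\Lambda$.
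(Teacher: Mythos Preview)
Your proposal is correct and follows essentially the same route as the paper: both invoke the smooth implicit-function schema of Flajolet--Sedgewick applied to $T_{\mathrm{not}\oplus}=z+\Lambda(T_{\mathrm{not}\oplus})$, identify the characteristic point $(R,\kappa)$ via $\Lambda'(\kappa)=1$ and $R=\kappa-\Lambda(\kappa)$, read off the square-root expansion from $G_z=1$ and $G_{ww}=\Lambda''(\kappa)$, then transfer to $T$ via \cref{ecu124} and to $T^{\oplus}$ via \cref{b1} together with singular differentiation. Your write-up is in fact more explicit than the paper's about why Condition~(C) forces $\kappa<R_\Lambda$ and why $R>0$; the only slightly muddled remark is the last sentence (what matters is not that $R<R_\Lambda$ as values, but that $T_{\mathrm{not}\oplus}$ stays below $\kappa<R_\Lambda$ on $[0,R)$, which you already observed).
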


\begin{proof}
First of all, note that since $\Lambda'<1$ on $[0,\kappa)$, $R$ is positive

We begin with the expansion of $T_{\mathrm{not}\oplus}$ for which we apply the smooth implicit theorem \cite[Theorem VII.3, p.467]{flajolet2009analytic}. Following \cite[Sec VII.4.1]{flajolet2009analytic} we claim that $T_{\mathrm{not}\oplus}$ satisfies the settings of the so-called \emph{smooth implicit-function schema}: $T_{\mathrm{not}\oplus}$ is solution of
$$T=G(z,T),$$
where $G(z,w)=z+P(\exp(w)-1)+(\exp(w)-1-w)=z+\Lambda(w)$.

The singularity analysis of $T_{\mathrm{not}\oplus}$ goes through the study of the characteristic system:
$$\begin{cases}
&G(r,s)=s,\\
&G_w(r,s)=1
\end{cases}\qquad \text{ with }0<r<R, \ s>0
$$
where $F_x=\frac{\partial F}{\partial x}$.\\
Note that $(r,s)=\left(R,\kappa \right)$ is a solution of the characteristic system of $G$ since 
\begin{itemize}
\item $G_w(r,s)=\Lambda'(\kappa)=1$
\item $G(r,s)=R+\Lambda(\kappa)=\kappa=s$
\end{itemize}
Moreover
\begin{itemize}
    \item $G_z(r,s)=1$
    \item $G_{w,w}(r,s)=\Lambda''(\kappa)$
\end{itemize}

The expansion of $T$ is then a consequence of \cref{ecu124} and of the expansion of $T_{\mathrm{not}\oplus}$.

For the expansion of $T^{\oplus}$, since $T$ and $T_{\mathrm{not}\oplus}$ can be extended to a $\Delta$-domain at $R$, singular differentiation \cite[Theorem VI.8 p.419]{flajolet2009analytic} yields the announced expansions when $z$ tends to $R$ and that $T^{\oplus}$ can be extended to a $\Delta$-domain at $R$. These expansions show that $T^{\oplus}$ a radius of convergence exactly equal to $R$.\end{proof}

Applying the Transfer Theorem \cite[Corollary VI.1 p.392]{flajolet2009analytic} to the results of \cref{thm5}, we obtain an equivalent of the number of trees of size $n$ in $\mathcal{T}_{\mathcal{P}}$. Since there is a one-to-one correspondence between graphs in $\mathcal{G}_{\mathcal{P}}$ and trees in $\mathcal{T}_{\mathcal{P}}$, we get the following result:

\begin{thm}\label{cor1}
Under Condition (C) defined p.\pageref{condi}, the number of graphs in $\mathcal{G}_{\mathcal{P}}$ of size $n$ is asymptotically equivalent to 
$$C \frac{n!}{R^nn^{\frac{3}{2}}}\quad \text{where}\quad C=\frac{(1+K)R}{\mu\sqrt{\pi}}.$$
\end{thm}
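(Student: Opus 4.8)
The plan is to combine the analytic expansions of \cref{thm5} with the standard singularity-analysis toolkit of Flajolet--Sedgewick. By \cref{thm5}, the series $T$ (the exponential generating function of $\mathcal{T}_{\mathcal{P}}$ counted by leaves) is $\Delta$-analytic with a unique dominant singularity at $R=\kappa-\Lambda(\kappa)$, and near $R$ it admits the expansion
\begin{align*}
T(z)=K-(1+K)\frac{2R}{\mu}\sqrt{1-\frac{z}{R}}+O\!\left(1-\frac{z}{R}\right).
\end{align*}
First I would invoke the Transfer Theorem \cite[Corollary VI.1 p.392]{flajolet2009analytic}: the only term contributing to the asymptotics of $[z^n]T$ is the square-root term, since the constant $K$ contributes nothing to coefficients of positive order and the $O(1-z/R)$ term transfers to a lower-order contribution of size $O(R^{-n}n^{-2})$. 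Using the classical coefficient asymptotics $[z^n](1-z/R)^{1/2}\sim \frac{-1}{2\sqrt{\pi}}R^{-n}n^{-3/2}$, the $-(1+K)\frac{2R}{\mu}$ prefactor yields
\begin{align*}
[z^n]T(z)\sim \frac{(1+K)R}{\mu\sqrt{\pi}}\,\frac{1}{R^n n^{3/2}}.
\end{align*}

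Next I would translate this from the exponential generating function back to actual counts. Since $T$ counts trees in $\mathcal{T}_{\mathcal{P}}$ by their number of leaves exponentially, the number of trees with $n$ leaves is $n!\,[z^n]T(z)$, giving the stated $C\,n!/(R^n n^{3/2})$ with $C=\frac{(1+K)R}{\mu\sqrt{\pi}}$. Finally I would invoke the one-to-one correspondence between labeled graphs in $\mathcal{G}_{\mathcal{P}}$ and modular decomposition trees in $\mathcal{T}_{\mathcal{P}}$ established in \cref{sec2}, which maps the size of a graph to the number of leaves of its tree; hence the number of graphs of size $n$ in $\mathcal{G}_{\mathcal{P}}$ equals the number of trees with $n$ leaves in $\mathcal{T}_{\mathcal{P}}$, and the result follows. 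Positivity of $C$ is immediate: $R>0$ was noted in the proof of \cref{thm5}, $K=\exp(\kappa)-1>0$ since $\kappa>0$ (as $\Lambda'(0)=0<1$ forces $\kappa>0$), and $\mu=\sqrt{2R\Lambda''(\kappa)}>0$ since $\Lambda$ has nonnegative coefficients with at least one positive coefficient of order $\geq 2$.

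There is essentially no obstacle here beyond bookkeeping, since all the analytic heavy lifting is already done in \cref{thm5}; the one point deserving a word of care is checking that the error term $O(1-z/R)$ in the expansion of $T$ is itself $\Delta$-analytic (so that the Transfer Theorem applies to it), which is guaranteed because $T$ and the explicit singular part are both $\Delta$-analytic, hence so is their difference. I would state this transfer step explicitly rather than leave it implicit. The aperiodicity established earlier ensures $R$ is the unique dominant singularity, so no roots of unity other than the trivial direction contribute, and the single-term asymptotic is valid without an oscillating correction.
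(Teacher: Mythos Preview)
Your proof is correct and follows essentially the same approach as the paper: apply the Transfer Theorem to the square-root expansion of $T$ from \cref{thm5}, then invoke the bijection between $\mathcal{G}_{\mathcal{P}}$ and $\mathcal{T}_{\mathcal{P}}$. You have simply made explicit the coefficient computation and the positivity check that the paper leaves to the reader.
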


\section{Graphs in $\mathcal{G}_{\mathcal{P}}$: enumeration of graphs with a given induced subgraph}\label{sec5}

The heart of this section is \cref{thmcoeur}, the combinatorial decomposition of graphs in $\mathcal{G}_{\mathcal{P}}$ with a given induced subgraph.  

\subsection{Induced subtrees and subgraphs}
We recall that the size of a graph is its number of vertices, and the size of a tree is its number of leaves.

\begin{dfn}[First common ancestor]
Let $t$ be a rooted tree and let $\ell_1,\ell_2$ be two distinct leaves of $t$. The \emph{first common ancestor} of $\ell_1$ and $\ell_2$ is the internal node of $t$ that is the furthest from the root and that belongs to the shortest path from the root to $\ell_1$, and the shortest path from the root to $\ell_2$.
\end{dfn}

The notations of the next definition are illustrated on \cref{dessininduit}.
\begin{dfn}[Induced subtree]\label{defg}
Let $(t,\mathfrak{I})$ be a marked tree in $\mathcal{T}_0$ ($\mathcal{T}_0$ is defined in \cref{d3}, and the notion of marked tree in \cref{mark}). The \emph{induced subtree $t_\mathfrak{I}$} of $t$ induced by $\mathfrak{I}$ is defined as:
\begin{itemize}
\item The leaves of $t_\mathfrak{I}$ are the leaves of $t$ that are marked. For every such leaf labeled with an integer $\ell$, the new label of $\ell$ is $\mathfrak{I}(\ell)$;

\item The internal nodes of $t_\mathfrak{I}$ are the internal nodes of $t$ that are first common ancestors of two or more leaves of $t_\mathfrak{I}$;

\item The ancestor-descendent relation in $t_\mathfrak{I}$ is inherited from the one in $t$;

\item For every internal node $\mathfrak{n}$ of $t$ that appears in $t_\mathfrak{I}$, let $H$ be its decoration in $t$. Set
$$J=\left\{k;\ k\text{-th tree of $t_{\mathfrak{n}}$ contains a marked leaf}\right\}$$
For $k$ in $J$, set

$$\mathfrak{L}(k)=\min \left\{\mathfrak{J}(\ell);\ \ell\ \text{marked leaf in the}\ k\text{-th tree of $\mathfrak{L}(k)$} \right\}.$$
The decoration of $\mathfrak{n}$ in $t_\mathfrak{I}$ is the reduction of $H_{\mathfrak{L}}$ as defined in \cref{d12}.

\end{itemize}

For every internal node $\mathfrak{n}$ (resp.~leaf $\ell)$ of $t_\mathfrak{I}$, we also define $\phi(\mathfrak{n})$ to be the only internal node (resp.~leaf) of $t$ corresponding to $\mathfrak{n}$.
\end{dfn}

\begin{rem}
When $(t,\mathfrak{I})$ is a marked tree and $t'$ is a subtree of $t$, we denote $t'_\mathfrak{I}$ the tree induced by the restriction of $\mathfrak{I}$ to the set of labels of leaves of $t'$.
\end{rem}

As a consequence of Definitions \ref{d12} and \ref{defg}, we obtain:

\begin{lem}\label{çasevoit}
Let $(t,\mathfrak{I})$ be a marked tree in $\mathcal{T}_0$. Then $$\mathrm{Graph}(t)_{\mathfrak{I}}=\mathrm{Graph}(t_{\mathfrak{I}}).$$
\end{lem}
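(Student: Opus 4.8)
The statement $\mathrm{Graph}(t)_{\mathfrak{I}} = \mathrm{Graph}(t_{\mathfrak{I}})$ is an identity between two labeled graphs, so I would prove it by induction on the number of internal nodes of $t$ (equivalently, on the size of $t$). First I would dispose of the degenerate cases: if $t$ is a single leaf, or more generally if $\mathfrak{I}$ marks zero or one leaf of $t$, then both sides are the empty graph or the one-vertex graph with the appropriate label, and the identity is immediate. So assume $\mathfrak{I}$ marks at least two leaves; then the root $r$ of $t$ is a first common ancestor of two marked leaves and hence appears in $t_{\mathfrak{I}}$, so it is the root of $t_{\mathfrak{I}}$ as well.

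\textbf{Main step.} Write the root decoration of $t$ as a graph $H$ with $|H| = m$, and let $t_1, \dots, t_m$ be the subtrees hanging from $r$, so that by \Cref{d2} one has $\mathrm{Graph}(t) = H[\mathrm{Graph}(t_1), \dots, \mathrm{Graph}(t_m)]$. Let $J = \{k : t_k \text{ contains a marked leaf}\}$ as in \Cref{defg}, and let $\mathfrak{L}$ be the associated partial injection $k \mapsto \min\{\mathfrak{I}(\ell) : \ell \text{ marked leaf of } t_k\}$ on $J$. By \Cref{defg} the root of $t_{\mathfrak{I}}$ carries decoration $\mathrm{red}(H_{\mathfrak{L}})$ (the reduction of the subgraph of $H$ induced by $\mathfrak{L}$), and its children are exactly the induced subtrees $(t_k)_{\mathfrak{I}}$ for $k \in J$, reduced. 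The plan is to compare this, via \Cref{d2} applied to $t_{\mathfrak{I}}$, with the graph obtained from $\mathrm{Graph}(t)$ by keeping only the marked vertices. Now $\mathrm{Graph}(t)_{\mathfrak{I}}$ keeps precisely those vertices of $\mathrm{Graph}(t)$ lying in some block $V_{\mathrm{Graph}(t_k)}$ with $k \in J$; the vertices kept inside block $k$ form exactly $\mathrm{Graph}(t_k)_{\mathfrak{I}}$, which by the induction hypothesis equals $\mathrm{Graph}((t_k)_{\mathfrak{I}})$. For the edges: by \Cref{2.3}, two kept vertices in the same block $k$ are adjacent in $\mathrm{Graph}(t)$ iff they are adjacent in $\mathrm{Graph}(t_k)$, matching the internal edges of the $k$-th substituted factor; and two kept vertices in distinct blocks $k \neq k'$ (with $k, k' \in J$) are adjacent iff $\{v_k, v_{k'}\} \in E_H$, which by \Cref{d12} is exactly the adjacency of the corresponding two vertices in $H_{\mathfrak{L}}$, hence in $\mathrm{red}(H_{\mathfrak{L}})$. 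Therefore $\mathrm{Graph}(t)_{\mathfrak{I}} = \mathrm{red}(H_{\mathfrak{L}})\big[\mathrm{Graph}((t_k)_{\mathfrak{I}})\big]_{k \in J} = \mathrm{Graph}(t_{\mathfrak{I}})$.

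\textbf{Bookkeeping.} The only delicate point — and the step I expect to require the most care — is the matching of \emph{labels} rather than just the abstract graph structure, because both \Cref{defg} and \Cref{d12} perform reductions at every internal node. One must check that the label $\mathfrak{I}(\ell)$ assigned to a surviving leaf $\ell$ is carried through consistently: the reduction on the root decoration of $t_{\mathfrak{I}}$ relabels the blocks of $J$ according to the order of their minima $\mathfrak{L}(k)$, and this is compatible with the convention (stated before \Cref{d2}) that the subtrees of a node are ordered by minimal leaf label, so that the block minima in $t_{\mathfrak{I}}$ are in the same relative order as the values $\mathfrak{L}(k)$. One then verifies that iterating \Cref{d2} through $t_{\mathfrak{I}}$ reproduces exactly the label $\mathfrak{I}(\ell)$ on leaf $\ell$, which is also the label that vertex receives in $\mathrm{Graph}(t)_{\mathfrak{I}}$ by \Cref{d12}. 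Once this label-tracking is set up cleanly, the inductive step above closes and the lemma follows.
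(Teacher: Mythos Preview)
Your proof is correct and is precisely the natural inductive argument that unpacks Definitions \ref{d12} and \ref{defg}. The paper itself provides no proof, stating only that the lemma is ``a consequence of Definitions \ref{d12} and \ref{defg}''; your write-up makes explicit the label-tracking and the compatibility between the ordering convention on subtrees and the reduction of $H_{\mathfrak{L}}$, which is exactly the content the paper leaves to the reader.
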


\begin{figure}[htbp]
\begin{center}
\centering
\includegraphics[scale=0.8]{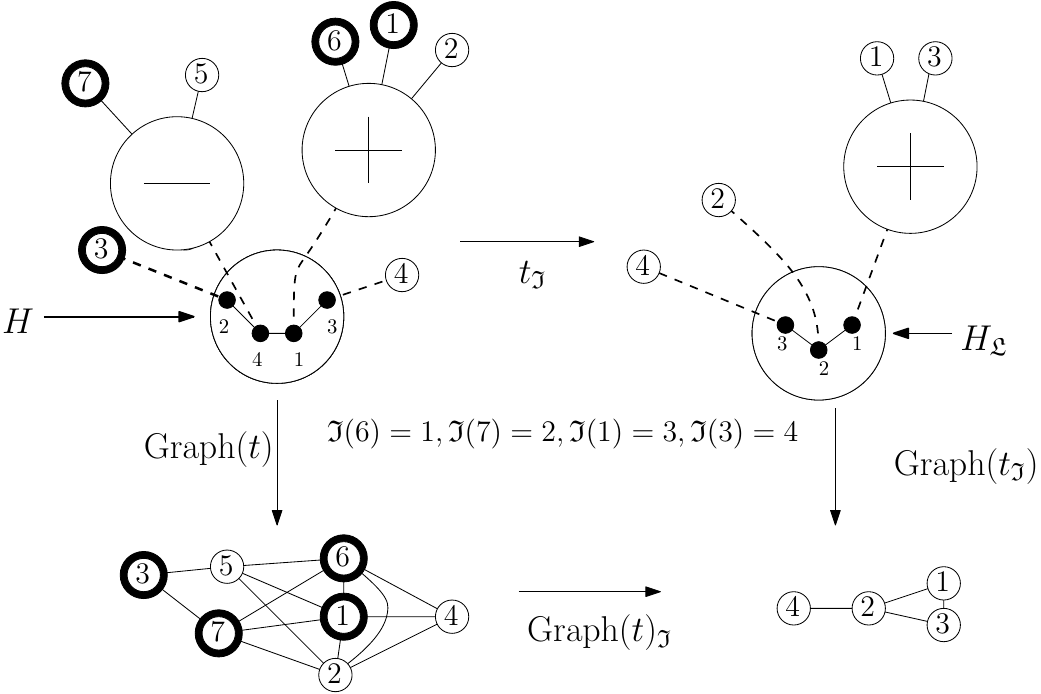}
\caption{Illustration of \cref{çasevoit}: relations between induced subgraph and induced subtree.}\label{dessininduit}
\end{center}
\end{figure}

\begin{figure}[htbp]
\begin{center}
\centering
\includegraphics[scale=0.6]{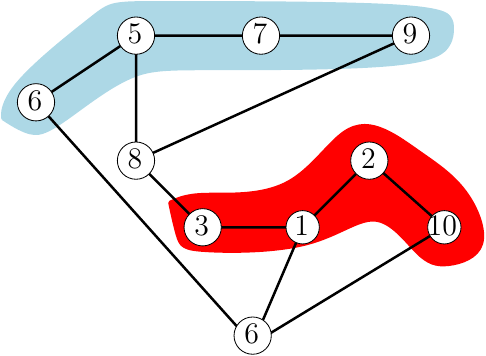}
\caption{Two occurrences of a $P_4$ (line of size $4$) in a blossomed graph $H$.}
\end{center}
\end{figure}

\begin{dfn}
For every graph $G$ without blossom, set:
\begin{align*}
&\mathrm{Occ}_{G, \mathcal{P}}(z):=\sum\limits_{H\in \mathcal{P}}\frac{\mathrm{Occ}_{G}(H)z^{|H|-|G|}}{|H|!}
\end{align*}

\end{dfn}

\begin{prop}
For every $k\geq 1$:
\begin{align}
  \label{e1}
&\sum\limits_{G: \ |G|=k}\mathrm{Occ}_{G,\mathcal{P}}(z)=P^{(k)}(z)
\end{align}

Thus for every graph $G$, $\mathrm{Occ}_{G,\mathcal{P}}$, has a radius of convergence strictly greater than $R$, the radius of convergence of $T$.
\end{prop}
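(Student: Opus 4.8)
The key identity \eqref{e1} is just a bookkeeping statement about how the $k$-th derivative of $P$ distributes over graphs of size $k$, so the plan is to prove it by a double-counting argument and then deduce the radius-of-convergence claim by combining with Condition (C) (specifically the location of $R$ relative to $R_0=R_\Lambda$).

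\begin{proof}
We first prove \eqref{e1}. Fix $k\geq 1$. Expanding the definition,
\begin{align*}
\sum_{G:\ |G|=k}\mathrm{Occ}_{G,\mathcal{P}}(z)
&=\sum_{G:\ |G|=k}\ \sum_{H\in\mathcal{P}}\frac{\mathrm{Occ}_G(H)\,z^{|H|-k}}{|H|!}
=\sum_{H\in\mathcal{P}}\frac{z^{|H|-k}}{|H|!}\sum_{G:\ |G|=k}\mathrm{Occ}_G(H).
\end{align*}
Now $\sum_{G:\,|G|=k}\mathrm{Occ}_G(H)$ counts, over all (isomorphism classes of) graphs $G$ of size $k$, the number of partial injections $\mathfrak{I}$ from the labels of $H$ to $\N$ with $H_{\mathfrak{I}}\simeq G$; since every such $H_{\mathfrak{I}}$ is \emph{some} graph of size $k$, this total is simply the number of partial injections $\mathfrak{I}$ from $V_H$ with domain of size exactly $k$. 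Choosing such an injection amounts to choosing an ordered $k$-tuple of distinct vertices of $H$ together with their images, but the images only shift labels and do not change the count of induced subgraphs summed over all $G$; more precisely, for the purpose of counting we may take $\mathfrak{I}$ to have image $\{1,\dots,k\}$, giving $|H|(|H|-1)\cdots(|H|-k+1)=\frac{|H|!}{(|H|-k)!}$ choices when $|H|\geq k$, and $0$ otherwise. Hence
\begin{align*}
\sum_{G:\ |G|=k}\mathrm{Occ}_{G,\mathcal{P}}(z)
=\sum_{\substack{H\in\mathcal{P}\\ |H|\geq k}}\frac{z^{|H|-k}}{|H|!}\cdot\frac{|H|!}{(|H|-k)!}
=\sum_{\substack{H\in\mathcal{P}\\ |H|\geq k}}\frac{z^{|H|-k}}{(|H|-k)!}
=P^{(k)}(z),
\end{align*}
the last equality being the term-by-term $k$-fold differentiation of $P(z)=\sum_{H\in\mathcal{P}}\frac{z^{|H|}}{|H|!}$.

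\smallskip
It remains to see that each $\mathrm{Occ}_{G,\mathcal{P}}$ has radius of convergence strictly larger than $R$. Since $\mathrm{Occ}_G(H)\leq \frac{|H|!}{(|H|-|G|)!}\leq |H|!$, the series $\mathrm{Occ}_{G,\mathcal{P}}(z)$ is dominated coefficientwise by $\sum_{H\in\mathcal{P}}z^{|H|-|G|}=z^{-|G|}\sum_{H\in\mathcal{P}}z^{|H|}$, whose radius of convergence equals that of $P$, namely $R_0$; alternatively, $\mathrm{Occ}_{G,\mathcal{P}}$ is up to the factor $z^{-|G|}$ a sum of finitely many derivatives of a subseries of $P$ (by \eqref{e1} applied for $1\le j\le|G|$), so its radius of convergence is exactly $R_0=R_\Lambda=\log(1+R_0)$ when $R_0<\infty$, and $+\infty$ otherwise. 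Thus it suffices to check $R<R_\Lambda$. Recall $R=\kappa-\Lambda(\kappa)$ with $\kappa\in[0,R_\Lambda)$. Since $\Lambda$ has nonnegative coefficients and $\Lambda(0)=0$, we have $\Lambda(\kappa)\geq 0$, hence $R\leq\kappa<R_\Lambda$. (When $R_0=+\infty$ there is nothing to prove.) This gives the claim.
\end{proof}

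\textbf{Main obstacle.} The only genuinely delicate point is the combinatorial identity $\sum_{|G|=k}\sum_H \mathrm{Occ}_G(H)=\frac{|H|!}{(|H|-k)!}$ per $H$: one must be careful that summing $\mathrm{Occ}_G(H)$ over \emph{all} graphs $G$ of size $k$ (including over isomorphism types, with the labels of $\mathfrak{I}$'s image being irrelevant to the count) precisely recovers the number of injective partial maps of domain size $k$ — i.e.\ that the map $\mathfrak{I}\mapsto H_{\mathfrak{I}}$ is onto the size-$k$ graphs with the fibers partitioning the set of such $\mathfrak{I}$. The radius-of-convergence statement is then essentially immediate from Condition (C) via the crude bound $\mathrm{Occ}_G(H)\le|H|!$ together with $R\le\kappa<R_\Lambda$.
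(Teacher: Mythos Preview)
Your proof of the identity \eqref{e1} is correct and essentially identical to the paper's: both interchange the sums and observe that $\sum_{|G|=k}\mathrm{Occ}_G(H)=\frac{|H|!}{(|H|-k)!}$, which is just the count of partial injections from the labels of $H$ with image $\{1,\dots,k\}$.

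The radius-of-convergence argument, however, contains two errors. First, the equation $R_0=R_\Lambda=\log(1+R_0)$ is false: by definition $R_\Lambda=\log(1+R_0)$, and for $0<R_0<\infty$ one has $\log(1+R_0)<R_0$ strictly. Second, the crude bound $\mathrm{Occ}_G(H)\le |H|!$ gives coefficientwise domination by $\sum_{H\in\mathcal{P}}z^{|H|-|G|}=\sum_n |\mathcal{P}_n|\,z^{n-|G|}$, whose radius of convergence is \emph{not} $R_0$ in general (it is $R_0$ divided by the exponential growth of $n!$, so typically much smaller). The cleaner route, which the paper takes, is to use \eqref{e1} itself: since every $\mathrm{Occ}_{G,\mathcal{P}}$ has nonnegative coefficients and appears as one summand on the left-hand side, it is dominated termwise by $P^{(k)}$, whose radius is $R_0$. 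Your final inequality $R=\kappa-\Lambda(\kappa)\le\kappa<R_\Lambda$ is correct, and combined with $R_\Lambda<R_0$ (or $R_0=+\infty$) it does yield $R<R_0$; so the skeleton of your argument is right, but the two slips above need fixing.
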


\begin{proof}
Let $H$ be an element of $\mathcal{P}$. Since there are $\frac{|H|!}{(|H|-k)!}$ choices of partial injection whose image is $\{1,\dots, k\}$, we have:
$$\sum\limits_{G:\ |G|=k}\mathrm{Occ}_{G,\mathcal{P}}(z)=\sum \limits_{H\in \mathcal{P}}\sum\limits_{G: \ |G|=k}\frac{\mathrm{Occ}_{G}(H)z^{|H|-k}}{|H|!}=\sum \limits_{H\in \mathcal{P}}\frac{z^{|H|-k}}{(|H|-k)!}=P^{(k)}(z)$$

For every graph $G$, $\mathrm{Occ}_{G,\mathcal{P}}$ has nonnegative coefficients and for every $k\geq 0$, as mentioned in \Cref{sec4.2}, $P^{(k)}$ has a radius of convergence at least $R_0$, the radius of convergence of $P$, which is greater than $R$. This implies that $\mathrm{Occ}_{G,\mathcal{P}}$ has a radius of convergence greater than $R$.\end{proof}

\subsection{Enumerations of trees with a given induced subtree}

The key step in the proof of our main theorem is to compute the limiting probability (when $n\to \infty$) that a uniform induced subtree of a uniform tree in $\mathcal{T}_{\mathcal{P}}$ with $n$ leaves is a given substitution tree. 

In the following, let $\tau\in \mathcal{T}_0$ be a fixed substitution tree of size at least $2$.

\begin{dfn}
We define \emph{$\mathcal{T}_{\tau}$} to be the set of marked trees $(t,\mathfrak{I})$ where $t\in \mathcal{T}_{\mathcal{P}}$ and such that $t_\mathfrak{I}$ is isomorphic to $\tau$. We also define $T_{\tau}$ to be the corresponding exponential generating function (where the size parameter is the total number of leaves, including the marked ones).
\end{dfn}
 The aim now is to decompose a tree admitting $\tau$ as a subtree in smaller trees. Let $(t,\mathfrak{I})$ be in $\mathcal{T}_{\tau}$, note that the image of $\mathfrak{I}$ is $\{1,\dots,|\tau|\}$. A prime node $\mathfrak{n}$ of $\tau$ is such that $\phi(\mathfrak{n})$ must be a prime node. In constrast, knowing that an internal node $\mathfrak{n}'$ of $\tau$ is decorated with $\oplus$ or $\ominus$ does not give any information about the decoration of $\phi(\mathfrak{n}')$.

In order to state \cref{thmcoeur} below, we need to partition the internal nodes of $\tau$:

\begin{dfn}
Let $(t,\mathfrak{I})$ be in $\mathcal{T}_{\tau}$. We denote by $\mathcal{N}(t,\mathfrak{I})$ the set of internal nodes $\mathfrak{n}$ of $\tau$ such that $\phi(\mathfrak{n})$ is non-linear.
\end{dfn}

Note that the set of non-linear nodes of $\tau$ must be included in $\mathcal{N}(t,\mathfrak{I})$.

\begin{thm}\label{thmcoeur}
Let $\tau$ be a substitution tree of size at least $2$. Let $\Vs$ be a set of internal nodes of $\tau$ that contains every non-linear node of $\tau$ and $\overline{\Vs}$ the set of internal nodes of $\tau$ not in $\Vs$. 

Let $\mathcal{T}_{\tau,\Vs}$ be the set of marked trees $(t,\mathfrak{I})$ in $\mathcal{T}_{\tau}$ such that $\mathcal{N}(t,\mathfrak{I})=\Vs$, and let $T_{\tau,\Vs}$ be its exponential generating function.

Then
\begin{align}\label{eqlongue}
T_{\tau,\Vs}=z^{|\tau|}&T^{\mathrm{root}}\left(T_{\mathrm{not}\oplus}^{\oplus}\right)^{d_{=}}\left(T_{\mathrm{not}\oplus}^{\ominus}\right)^{d_{\neq }}\left(T_{\mathrm{not}\oplus}^{\mathsf{blo}}\right)^{d_{\overline{\Vs}\to\Vs}}\left(T_{\mathrm{not}\oplus}^{'}\right)^{d_{\overline{\Vs}\to \ell}}\exp(n_L T_{\mathrm{not}\oplus})\\
&\times (T^{\oplus})^{d_{\Vs\to\overline{\Vs}}}(T^{\mathsf{blo}})^{d_{\Vs \to \Vs}}(T')^{d_{\Vs\to \ell}}\prod\limits_{\mathfrak{n}\in \Vs}\mathrm{Occ}_{\mathrm{dec}(\mathfrak{n}),\mathcal{P}}(T)
\end{align}
and:
\begin{itemize}
    \item $d_{=}$ is the number of edges between two internal nodes not in \Vsr with the same decoration ($\oplus$ and $\oplus$, or $\ominus$ and $\ominus$);
    \item $d_{\neq }$ is the number of edges between two internal nodes not in \Vsr decorated with different decorations ($\oplus$ and $\ominus$);
    \item $d_{\overline{\Vs}\to\Vs}$ is the number of edges between an internal node not belonging to \Vsr and one of its children belonging to \Vsr;
    \item $d_{\Vs\to\overline{\Vs}}$ is the number of edges between an internal node belonging to \Vsr and one of its children not belonging to \Vsr;
    \item $d_{\Vs\to\Vs}$ is the number of edges between an internal node belonging to \Vsr and one of its children belonging to \Vsr;
    \item $d_{\overline{\Vs}\to \ell}$ is the number of edges between an internal node not in \Vsr and a leaf;
    \item $d_{\Vs\to \ell}$ is the number of edges between an internal node not in \Vsr and a leaf;
    \item $n_L$ is the number of internal nodes not in \Vsr;
    \item $\mathrm{dec}(\mathfrak{n})$ is the decoration of $\mathfrak{n}$;
    \item $T^{\mathrm{root}}=T^{\oplus}$ if the root of $\tau$ is not in \Vsr, $T^{\mathrm{root}}=T^{\mathsf{blo}}$ otherwise.
\end{itemize}

\end{thm}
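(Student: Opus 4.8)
The plan is to prove the formula \eqref{eqlongue} by a direct combinatorial decomposition of a marked tree $(t,\mathfrak{I})\in\mathcal{T}_{\tau,\Vs}$, reading off the generating function as a product over the "pieces" of $t$ sitting above and around the image of $\tau$. First I would fix $(t,\mathfrak{I})$ and recall from \Cref{defg} that the induced subtree $t_\mathfrak{I}$ comes with an injection $\phi$ sending each internal node (resp.\ leaf) of $\tau\cong t_\mathfrak{I}$ to the corresponding node of $t$. The key structural observation is that the nodes $\phi(\mathfrak{n})$ for $\mathfrak{n}$ ranging over internal nodes of $\tau$, together with the marked leaves, organise $t$ into: (i) the "core" consisting of the images $\phi(\mathfrak{n})$ and the edges of $t$ lying on the $\phi$-images of the edges of $\tau$; and (ii) a collection of pendant subtrees of $\mathcal{T}_{\mathcal{P}}$ (with no marked leaves, so counted by $T$, $T_{\mathrm{not}\oplus}$, or their blossomed/pointed variants) hanging off this core. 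The exponent bookkeeping $d_{=},d_{\neq},d_{\overline{\Vs}\to\Vs},\ldots$ is precisely the count of edges of $\tau$ of each combinatorial type, which dictates which generating-function factor each edge contributes.

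\textbf{Key steps.} I would carry this out as follows. (1) Analyse a single edge $e=(\mathfrak{m},\mathfrak{n})$ of $\tau$ from parent $\mathfrak{m}$ to child $\mathfrak{n}$: the path in $t$ from $\phi(\mathfrak{m})$ to $\phi(\mathfrak{n})$ may pass through a chain of intermediate internal nodes of $t$ that are \emph{not} in the image of $\phi$ — these are exactly nodes all of whose marked descendants lie in a single child-subtree. Show each such intermediate node is necessarily linear (it cannot be a prime node, since a prime node of $t$ that is an ancestor of at least two marked leaves distributed in $\geq 2$ of its children would be a first common ancestor, hence in $\phi(\tau)$; and if all marked leaves below it lie in one child it contributes nothing). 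Then the contribution of the "bridge" along $e$, together with the pendant subtrees hanging off the intermediate linear nodes and off $\phi(\mathfrak{n})$'s "entry side", is captured by one of $T_{\mathrm{not}\oplus}^{\oplus}$, $T_{\mathrm{not}\oplus}^{\ominus}$, $T_{\mathrm{not}\oplus}^{\mathsf{blo}}$, $T'_{\mathrm{not}\oplus}$, $T^{\oplus}$, $T^{\mathsf{blo}}$, $T'$ — which one depending on whether $\mathfrak{m},\mathfrak{n}$ are in $\Vs$ or $\overline\Vs$ (determining whether we are entering/leaving a linear or a non-linear region, which in turn determines the "not $\oplus$" restriction by the no-two-successive-same-linear-decoration rule and the involution $T_{\mathrm{not}\oplus}=T_{\mathrm{not}\ominus}$ from \eqref{ecutri}), and whether $\mathfrak{n}$ is a leaf. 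This is where \Cref{4.4} and \Cref{4.5} supplying $T^{\mathsf{blo}}$, $T^{\oplus}$, etc.\ get used. (2) Analyse a node $\mathfrak{n}\in\Vs$: here $\phi(\mathfrak{n})$ is non-linear, i.e.\ decorated by a prime graph $H\in\mathcal{P}$, and the restriction of $\mathfrak{I}$ forces the induced decoration $\mathrm{dec}(\mathfrak{n})$; the number of ways to choose $H$ together with the partial injection realising $\mathrm{dec}(\mathfrak{n})$ as an induced subgraph of $H$, with the remaining $|H|-|\mathrm{dec}(\mathfrak{n})|$ vertices of $H$ filled in by further subtrees (counted by $T$), is exactly $\mathrm{Occ}_{\mathrm{dec}(\mathfrak{n}),\mathcal{P}}(T)$ by the definition of $\mathrm{Occ}_{G,\mathcal{P}}$. (3) Assemble: multiply the root factor $T^{\mathrm{root}}$ (accounting for the portion of $t$ above $\phi(\mathrm{root}(\tau))$, which is a blossomed tree if the root is in $\Vs$ and a $\oplus$-replaceable-blossomed tree otherwise), the per-edge factors raised to the powers $d_\bullet$, the $\exp(n_L T_{\mathrm{not}\oplus})$ factor for the arbitrary forest of unmarked non-$\oplus$-rooted subtrees hanging off the $n_L$ linear core nodes not in the image, the $z^{|\tau|}$ for the $|\tau|$ marked leaves, and the product of $\mathrm{Occ}_{\mathrm{dec}(\mathfrak{n}),\mathcal{P}}(T)$ over $\mathfrak{n}\in\Vs$. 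Finally check that every internal node and every edge of $\tau$ is accounted for exactly once by cross-referencing with the eight bullet definitions of $d_=,\ldots,n_L$, and that the exponential-generating-function product is legitimate because the pieces partition the leaf set (labels distribute freely, which is what "exponential" product encodes).

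\textbf{Main obstacle.} The hard part will be step (1): setting up a clean bijection between marked trees in $\mathcal{T}_{\tau,\Vs}$ and tuples of (root-piece, one bridge-piece per edge of $\tau$, one decoration-with-filler per node of $\Vs$, one unmarked forest per core linear node outside the image), and verifying it is \emph{both} well-defined and inverse-well-defined. In particular one must be careful that: the intermediate linear nodes along a bridge cannot collide with the "two successive linear nodes of the same type" prohibition in a modular decomposition tree — this is exactly why the bridge generating functions carry the $\mathrm{not}\oplus$ (equivalently $\mathrm{not}\ominus$) subscript and why the blossom must be $\oplus$- or $\ominus$-\emph{replaceable} in the right cases; and that a node $\mathfrak{n}\in\overline\Vs$ really does force $\phi(\mathfrak{n})$ linear, so that no prime decoration is double-counted between the $\mathrm{Occ}$ product and the bridge factors. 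I would handle this by an induction on the structure of $\tau$ from the root down, at each internal node of $\tau$ peeling off the bridge above it and then recursing into its children, using \Cref{çasevoit} to guarantee that $\mathrm{Graph}(t)_\mathfrak{I}$ is unaffected and hence that the reconstructed $t$ genuinely lies in $\mathcal{T}_\tau$ with $\mathcal{N}(t,\mathfrak{I})=\Vs$.
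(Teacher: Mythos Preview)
Your decomposition is exactly the paper's: split $t$ into a root piece $t_{\to\mathrm{root}}$, a blossomed bridge $t_{\mathfrak{n}\to\mathfrak{n}'}$ for each internal edge of $\tau$, a pointed piece $t_{\mathfrak{n}\to f}$ for each leaf-edge, and a residual piece $t_{\mathfrak{n}\to}$ at each internal node (an unmarked forest if $\mathfrak{n}\in\overline\Vs$; a prime decoration containing $\mathrm{dec}(\mathfrak{n})$ as an induced subgraph plus pendant $T$-subtrees if $\mathfrak{n}\in\Vs$), then multiply. The paper carries this out via ten cases (i)--(x) and spells out the inverse gluing.

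One claim in your step~(1) is false and should simply be dropped: the intermediate nodes of $t$ along a bridge from $\phi(\mathfrak{m})$ down to $\phi(\mathfrak{n})$ are \emph{not} necessarily linear. A prime node of $t$ can sit on such a path whenever all marked leaves below it lie in a single child-subtree; it is then not a first common ancestor of marked leaves and hence absent from $t_{\mathfrak{I}}$, but it remains a perfectly legal prime node of $t\in\mathcal{T}_{\mathcal{P}}$. Your parenthetical ``if all marked leaves below it lie in one child it contributes nothing'' establishes only that such a node is not in the image of $\phi$, not that it is linear. Fortunately the error is harmless for the formula: the blossomed series $T^{\oplus}$, $T^{\mathsf{blo}}$, $T_{\mathrm{not}\oplus}^{\oplus}$, $T_{\mathrm{not}\oplus}^{\ominus}$, $T_{\mathrm{not}\oplus}^{\mathsf{blo}}$ already enumerate trees in $\mathcal{T}_{\mathcal{P}}$ with unrestricted internal structure, prime nodes included. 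The ``$\mathrm{not}\oplus$'' and ``$\oplus$-replaceable'' constraints are purely boundary conditions at the two gluing points (root of the bridge and parent of its blossom), encoding the modular-decomposition rule that consecutive linear nodes cannot share a decoration --- they say nothing about the interior of the bridge. Once you remove that spurious linearity claim, your outline and the paper's proof coincide.
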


\begin{figure}
\begin{center}
\centering
\includegraphics[width=9cm]{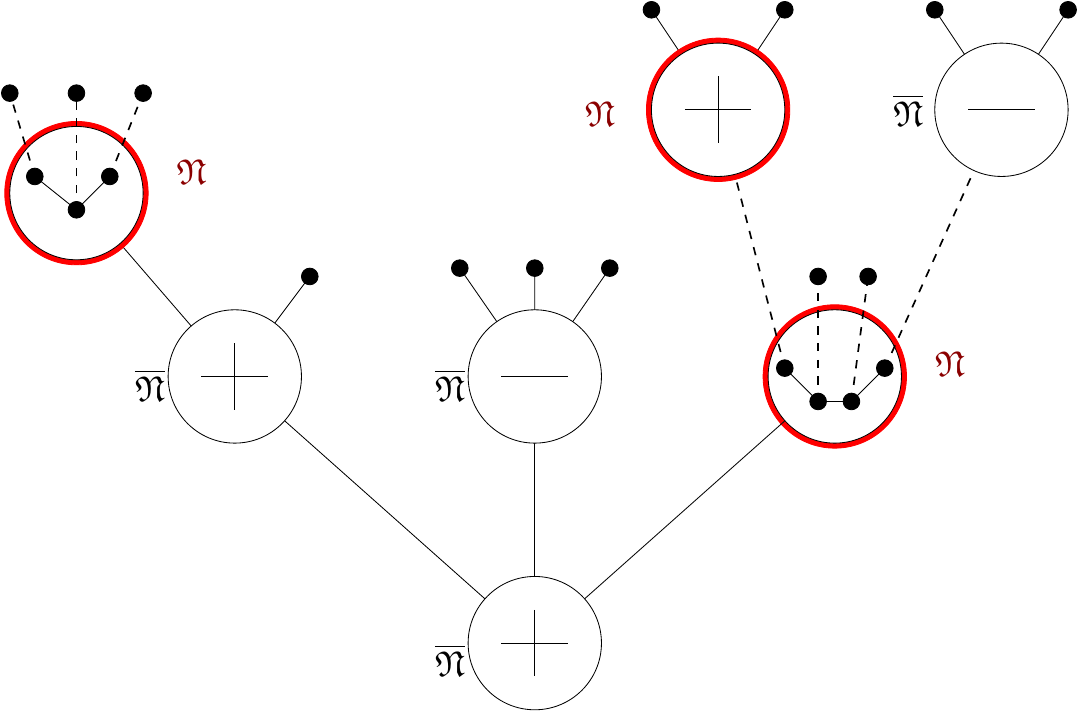}
\caption{A substitution tree $\tau$. Vertices in \Vsr are surrounded in red.}\label{fig13}
\end{center}
\end{figure}

\begin{figure}[htbp]
\begin{center}
\centering
\includegraphics[width=16cm]{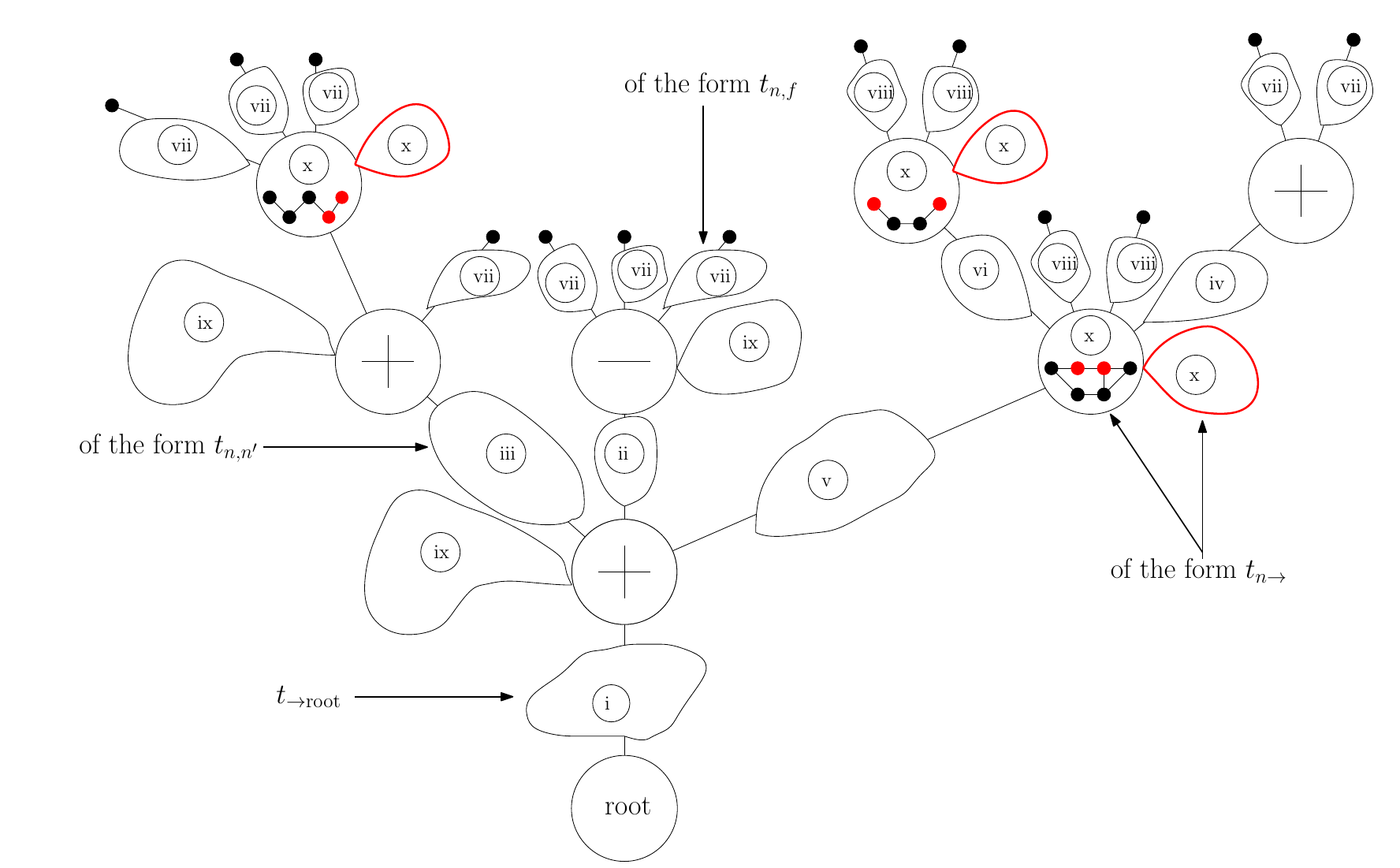}
\caption{The decomposition of a tree admitting the graph $\tau$ of \cref{fig13} as an induced tree. Roman numerals correspond to the different cases of the proof of \Cref{thmcoeur}.}
\end{center}
\end{figure}

\begin{proof}
Let $t$ be a tree in $\mathcal{T}_{\tau,\Vs}$. The proof consists in decomposing $t$ into several disjoint (blossomed) subtrees ($t_{\to \mathrm{root}},t_{\mathfrak{n}\to}, t_{\mathfrak{n}\to \mathfrak{n}'}, t_{\mathfrak{n}\to f}$) of various kinds (those who arise in the RHS of formula \cref{eqlongue}). All these subtrees are meant to be glued at blossoms, in order to recover the initial $t$. (Thus they are not counted in the generating function, in order to avoid counting them twice.)  In the following, every defined tree is assumed to be reduced.

\begin{itemize}
\item We define $t_{\to\mathrm{root}}$ to be the tree $t$ blossomed at $\phi(r_0)$, where $r_0$ is the root of $\tau$.

\item For each internal node $\mathfrak{n}$, we define $t_{\mathfrak{n}\to}$ in the following way: 

\begin{itemize}
    \item  If $\mathfrak{n}$ is not in \Vsr,  $t_{\mathfrak{n}\to}$ is the subtree of $t$ containing $\phi(\mathfrak{n})$ and all the trees of $t_{\phi(\mathfrak{n})}$ that do not contain a marked leaf of $t$.
    \item If $\mathfrak{n}$ is in $\Vs$, $t_{\mathfrak{n}\to}$ is given by the forest and the graph defined as follows (see \cref{Cas(x)}):
    \begin{itemize}
        \item The forest is composed of all subtree of $t_{\phi(\mathfrak{n})}$ which do not contain any marked leaf;
    \item  The graph is $G$ the decoration of $\phi(\mathfrak{n})$, blossomed sucessively at $v_1,\dots, v_k$, where $v_1,\dots, v_k$ are defined as follows. Let $t_{\mathfrak{n},1},\dots, t_{\mathfrak{n},k}$ be the trees in $t_{\mathfrak{n}}$ with a marked leaf, ordered by \emph{minimal mark label}. For each $j\in\{1,\dots,k\}$, we define $v_j$ the vertex in $G$ of label the rank of $ t_{\mathfrak{n},j}$ in  $t_{\mathfrak{n}}$ \emph{by minimal leaf label}.
    \end{itemize}
\end{itemize}

\item For every internal nodes $\mathfrak{n},\mathfrak{n}'$ in $\tau$ such that $\mathfrak{n}'$ is a child of $\mathfrak{n}$, let $t_{\mathfrak{n}\to \mathfrak{n}'}$ be the unique tree of $t_{\phi(\mathfrak{n})}$ containing $\phi(\mathfrak{n}')$, blossomed at $\phi(\mathfrak{n}')$.

\item For every internal node $\mathfrak{n}$ in $\tau$, and every leaf $f$ which is a child of $\mathfrak{n}$ in $\tau$, we define $t_{\mathfrak{n}\to f}$ to be the tree of $t_{\phi(\mathfrak{n})}$ containing $\phi(f)$.

\end{itemize}

\begin{figure}[htbp]
\begin{center}
\includegraphics[scale=0.6]{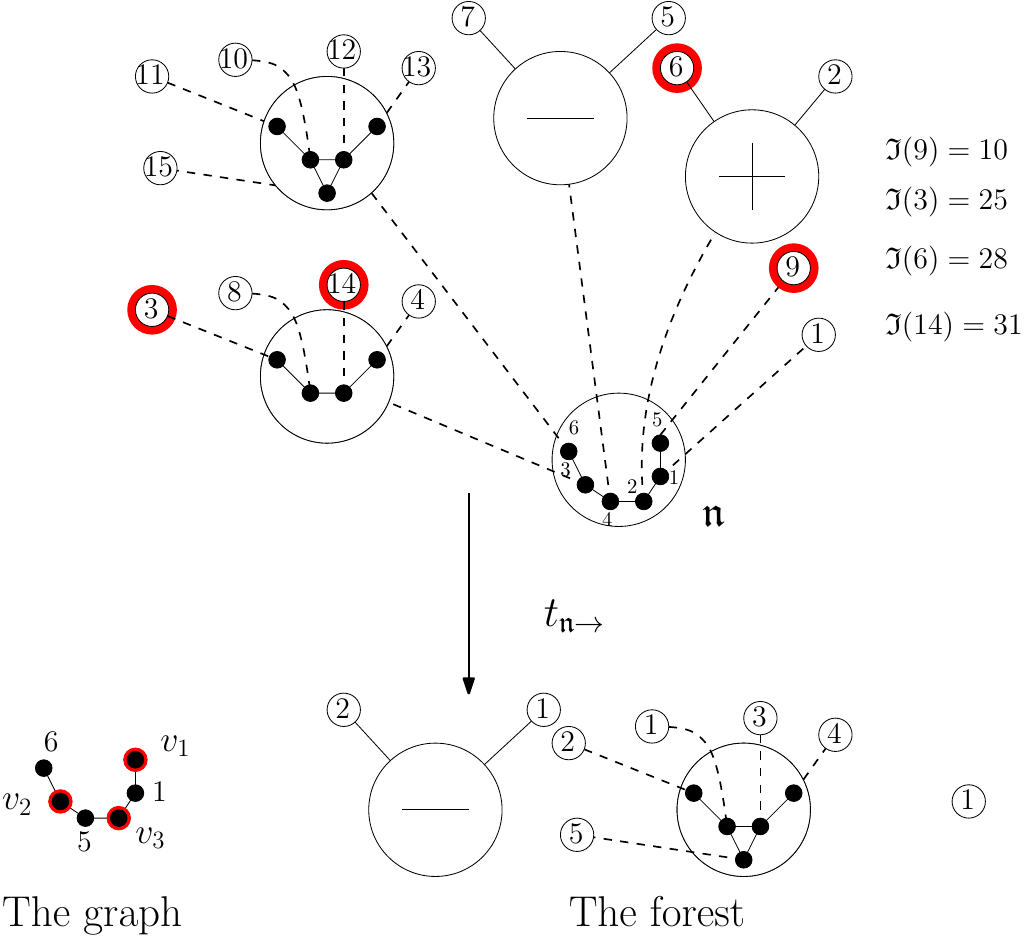}
\caption{Example of the construction of $t_{n\to}$ when $n\in \Vs$ (case (x)).}\label{Cas(x)}
\end{center}
\end{figure}

We now characterize the trees that appear in this decomposition and compute the corresponding exponential generating function. In the rest of the proof, we say abusively that every blossomed tree belongs to $\mathcal{T}_{\mathcal{P}}$, and that two nodes both decorated with $\oplus$ or $\ominus$ have the same decoration, even if they do not have the same number of children.

\noindent{\bf (i): analysis of $t_{\to\mathrm{root}}$}

The tree $t_{\to\mathrm{root}}$ is a tree in $\mathcal{T}_{\mathcal{P}}$, it has no marked leaf and a unique blossom. If the root is not in \Vsr and decorated with $\oplus$ (resp.~$\ominus$), the blossom is $\oplus$-replaceable (see \Cref{dfnblossom}) (resp.~$\ominus$-replaceable). If the root is in \Vsr, the blossom is replaceable.

The corresponding exponential generating function is equal to $T^{\oplus}$ if the root is not in \Vsr and equal to $T^{\mathsf{blo}}$ otherwise.

\noindent{\bf (ii): analysis of $t_{\mathfrak{n}\to \mathfrak{n}'}$ where $\mathfrak{n}\not\in \Vs$ and $\mathfrak{n}'$ is a child of $\mathfrak{n}$ not in \Vsr with the same decoration}

The tree $t_{\mathfrak{n}\to \mathfrak{n}'}$ is a tree in $\mathcal{T}_{\mathcal{P}}$ whose root is not decorated with the same decoration as $\mathfrak{n}$ and with one blossom $\oplus$-replaceable if $\mathfrak{n}'$ is decorated with $\oplus$, $\ominus$-replaceable otherwise and no marked leaf. 

The exponential generating function of such trees is either $T_{\mathrm{not}\oplus}^{\oplus}$ if both nodes are decorated with $\oplus$ or $T_{\mathrm{not}\ominus}^{\ominus}$ ($=T_{\mathrm{not}\oplus}^{\oplus}$) if both nodes are decorated with $\ominus$.

\noindent{\bf (iii): analysis of $t_{\mathfrak{n}\to \mathfrak{n}'}$ where $\mathfrak{n}\not\in \Vs$ and $\mathfrak{n}'$ is a child of $\mathfrak{n}$ not in \Vsr with a different decoration}

The tree $t_{\mathfrak{n}\to \mathfrak{n}'}$ is a tree in $\mathcal{T}_{\mathcal{P}}$ whose root has a different decoration from that of $\mathfrak{n}$ and with one blossom $\oplus$-replaceable if $\mathfrak{n}'$ is decorated with $\oplus$, $\ominus$-replaceable otherwise and no marked leaf.

The exponential generating function of such trees is either $T_{\mathrm{not}\oplus}^{\ominus}$ if $\mathfrak{n}$ is decorated with $\oplus$ and $\mathfrak{n}'$ with $\ominus$ or $T_{\mathrm{not}\ominus}^{\oplus}$ ($=T_{\mathrm{not}\oplus}^{\ominus}$) if $\mathfrak{n}$ is decorated with $\ominus$ and $\mathfrak{n}'$ with $\oplus$.

\noindent{\bf (iv): analysis of $t_{\mathfrak{n}\to \mathfrak{n}'}$ where $\mathfrak{n}\in \Vs$ and $\mathfrak{n}'$ is a child of $\mathfrak{n}$ not in \Vsr }

The tree $t_{\mathfrak{n}\to \mathfrak{n}'}$ is a tree in $\mathcal{T}_{\mathcal{P}}$ with one blossom $\oplus$-replaceable if $v'$ is decorated with $\oplus$, $\ominus$-replaceable otherwise and no marked leaf.

The exponential generating function of such trees is either $T^{\oplus}$ if $\mathfrak{n}'$ is decorated with $\oplus$ or $T^{\ominus}$ ($=T^{\oplus}$) if $\mathfrak{n}'$ is decorated with $\ominus$.

\noindent{\bf (v): analysis of $t_{\mathfrak{n} \to \mathfrak{n}'}$ where $\mathfrak{n}\not\in \Vs$ and $\mathfrak{n}'$ is a child of $\mathfrak{n}$ in \Vsr}

The tree $t_{\mathfrak{n}\to \mathfrak{n}'}$ is a tree in $\mathcal{T}_{\mathcal{P}}$ whose root is not decorated with the decoration of $\mathfrak{n}$ with one blossom and no marked leaf. 

The corresponding exponential generating function is $T_{\mathrm{not}\oplus}^{\mathsf{blo}}$.

\noindent{\bf (vi): analysis of $t_{\mathfrak{n} \to \mathfrak{n}'}$ where $\mathfrak{n}\in \Vs$ and $\mathfrak{n}'$ is a child of $\mathfrak{n}$ in \Vsr}

The tree $t_{\mathfrak{n}\to \mathfrak{n}'}$ is a tree in $\mathcal{T}_{\mathcal{P}}$  with one blossom and no marked leaf. 

The corresponding exponential generating function is $T^{\mathsf{blo}}$.

\noindent{\bf (vii): analysis of $t_{\mathfrak{n}\to f}$ where $\mathfrak{n}\not\in \Vs$ and $f$ is a leaf which is a child of $\mathfrak{n}$}

The tree $t_{\mathfrak{n}\to f}$ is a tree in $\mathcal{T}_{\mathcal{P}}$ whose root is not decorated with the decoration of $\mathfrak{n}$ with one marked leaf and no blossom.

The corresponding exponential generating function is $zT_{\mathrm{not}\oplus}'$.

\noindent{\bf (viii): analysis of $t_{\mathfrak{n}\to f}$ where $\mathfrak{n}\in \Vs$ and $f$ is a leaf which is a child of $\mathfrak{n}$}

The tree $t_{\mathfrak{n}\to f}$ is a tree in $\mathcal{T}_{\mathcal{P}}$  with one marked leaf and no blossom.

The corresponding exponential generating function is $zT'$.

\noindent{\bf (ix): analysis of $t_{\mathfrak{n}\to }$ where $\mathfrak{n}\not\in \Vs$ }

The tree $t_{\mathfrak{n}\to}$ is a tree whose root denoted is decorated with the same decoration as $\mathfrak{n}$, who has no marked leaf and no blossom. It verifies all the conditions of being $(\mathcal{P}$-consistent, except that the root can have $0$ or $1$ child.

The corresponding exponential generating function is $\sum\limits_{k\geq 0}T_{\mathrm{not}\oplus}^k=\exp(T_{\mathrm{not}\oplus})$.

\noindent{\bf (x): analysis of $t_{\mathfrak{n}\to}$ where $\mathfrak{n}\in \Vs$ }
This case is the more subtle one and the main reason we assume $\mathcal{P}$ to be stable by relabelling throughout the paper.

The tree $t_{\mathfrak{n}\to}$ is composed of a graph $G$ with $k$ blossoms, and a forest of $|G|-|\mathrm{dec}(\mathfrak{n})|$ trees.

Let $n$ be an integer such that $n\geq |\mathrm{dec}(\mathfrak{n})|$. We need to compute the cardinality of the set $B_n$ of all graphs $G$ of size $n$ that can be obtained in $t_{n\to}$. To that extent, we define $A_{n,\mathrm{dec}(\mathfrak{n})}$ as the set of marked graphs $(H,\mathfrak{J})$ such that $H$ is in $\mathcal{P}_n$ and the subgraph of $H$ induced by $\mathfrak{J}$ is $\mathrm{dec}(\mathfrak{n})$.

Define the function $\phi: A_{n,\mathrm{dec}(\mathfrak{n})}\to B_n$ such that $\phi(H,\mathfrak{J})$ is the graph with $k$ blossoms obtained after blossoming successively $H$ at $\mathfrak{J}^{-1}(1),\dots, \mathfrak{J}^{-1}(k)$. By construction $\phi$ is well-defined and surjective.

Moreover, for every $G\in B_n$ , since $\mathcal{P}$ is stable by relabelling, the size of $\phi^{-1}(G)$ is the same as the number of choices of labels of the $k$ blossoms: $\frac{n!}{(n-|\mathrm{dec}(\mathfrak{n})|)!}$.

Thus for every $n\geq |\mathrm{dec}(\mathfrak{n})|$, the number of graphs in $B_n$ is 
$$\frac{(n-|\mathrm{dec}(\mathfrak{n})|)!}{n!}| A_{n,\mathrm{dec}(\mathfrak{n})}|=\frac{(n-|\mathrm{dec}(\mathfrak{n})|)!}{n!}\sum\limits_{H'\in\mathcal{P}_n}\mathrm{Occ}_{\mathrm{dec}(\mathfrak{n})}(H').$$

Since the generating function of the forest is $\frac{T^{n-|\mathrm{dec}(\mathfrak{n})|}}{(n-|\mathrm{dec}(\mathfrak{n})|)!}$, the corresponding generating function of $t_{\mathfrak{n}\to}$ is 

$$\sum\limits_{H'\in \mathcal{P}}\frac{\mathrm{Occ}_{\mathrm{dec}(\mathfrak{n})}(H')T^{|H'|-|\mathrm{dec}(\mathfrak{n})|}}{|H'|!}=\mathrm{Occ}_{\mathrm{dec}(\mathfrak{n}), \mathcal{P}}(T)$$\medskip

Note that, with all these conditions, the process described in this proof is a bijection between $\mathcal{T}_{\tau, \Vs}$ and the labelled forest of all previous mentioned trees (without doing reduction). Here is a reciprocal to recover the original tree.

Simultaneously, 
\begin{itemize}
\item For every internal node $v$ of $\tau$ in $\Vs$, we replace $t_{v\to}$ by a tree whose root is decorated with the graph of $t_{v\to}$ denoted by $G$, and whose subtrees are composed of the forest in $t_{v\to}$ and all the different trees of the form $t_{\mathfrak{n}, \mathfrak{n}_0}$ and $t_{\mathfrak{n}\to \ell}$. We change the labels of the blossoms of the decoration such that for every $j$ such that $*_j$ is a blossom of the decoration, the label $*_j$ is replaced by the rank of $t_{\mathfrak{n}\to \mathfrak{n}_j}$ in the set of subtrees, where $\mathfrak{n}_j$ is the $j$-th children of $\mathfrak{n}$ in $\tau$. The other labels are replaced so that the decoration is labeled, and the order between the previous and the new label stay the same;

    \item For every internal node $\mathfrak{n}$ of $\tau$ not in $\Vs$, we add to the subtrees of the root of $t_{\mathfrak{n}\to }$ the different trees of the form $t_{\mathfrak{n}, \mathfrak{n}_0}$ and $t_{\mathfrak{n}\to \ell}$. 
    \item For every internal node, we replace the blossom of $t_{\mathfrak{n}'\to \mathfrak{n}}$ with $\mathfrak{n}'$ the parent of $v$ (resp. $t_{\to \mathrm{root}}$ if $\mathfrak{n}$ is the root of $\tau$) by the tree $t_{\mathfrak{n}\to}$ after the previous modifications to get a tree $t'$. For every internal decoration of a node $\mathfrak{n}_0$ blossomed at some $t_{\mathfrak{n}'\to \mathfrak{n}}$, we replace the decoration of the blossom with the rank $r$ of the tree of $t'_{\mathfrak{n}_0}$ containing $t_{\mathfrak{n}\to}$, and increase all the labels greater or equal to $r$ by one.

\end{itemize}

After these operations, we recover the original tree.

\begin{figure}
\begin{center}
\centering
\includegraphics[scale=0.6]{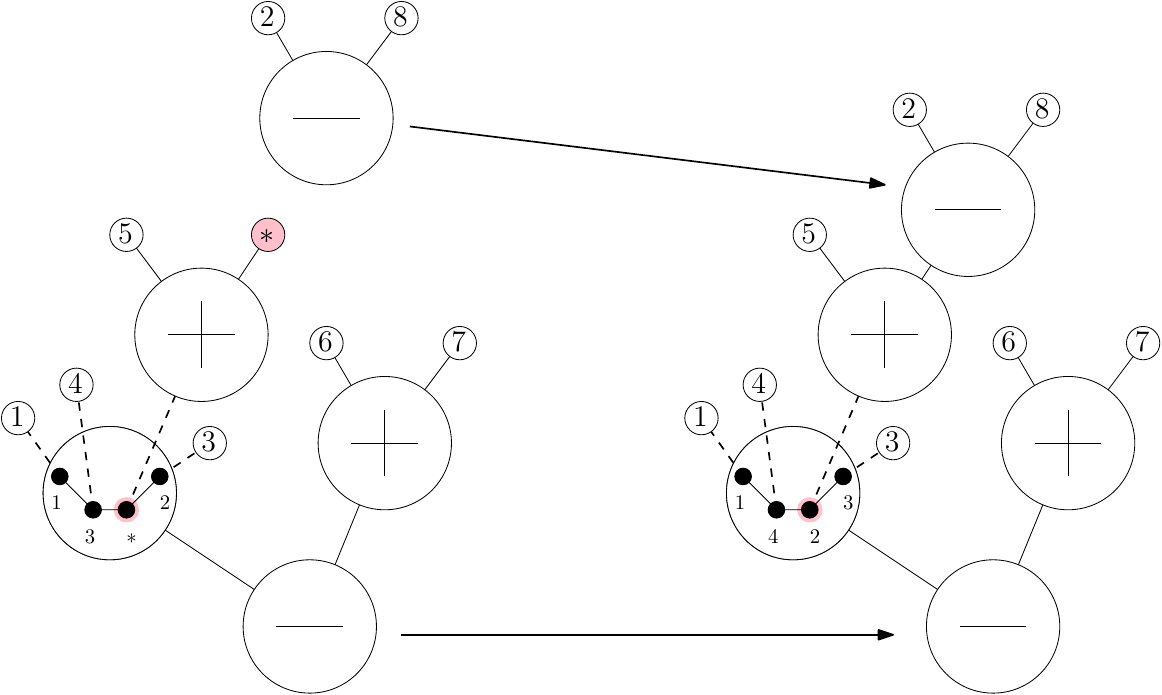}
\caption{Gluing of two trees.}
\end{center}
\end{figure}

Thus, $T_{\tau,\Vs}$ is the product of the generating functions and this concludes the proof of the theorem.

\end{proof}

\begin{cor}\label{cort}
    With the notations of \Cref{thmcoeur}, 
    \begin{align*}
T_{\tau,\Vs}=z^{|\tau|}&T^{\mathrm{root}}\left(T^{\oplus}\right)^{e}\exp((d_{\Vs\to \ell}+d_{\Vs \to \Vs}+n_L-d_{=}-d_{\neq}) T_{\mathrm{not}\oplus})\prod\limits_{\mathfrak{n}\in \Vs}\mathrm{Occ}_{\mathrm{dec}(\mathfrak{n}),\mathcal{P}}(T)
\end{align*}
where $e$ is the number of edges of $\tau$.
\end{cor}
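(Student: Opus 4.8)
The plan is to derive \Cref{cort} from \Cref{thmcoeur} by substituting the closed-form expressions for the various blossomed generating functions obtained in \Cref{4.4} and \Cref{4.5}, and then collecting the exponents of $T^{\oplus}$ and of $\exp(T_{\mathrm{not}\oplus})$. Recall from those results that $T^{\mathsf{blo}}=T'=T^{\oplus}\exp(T_{\mathrm{not}\oplus})$, that $T_{\mathrm{not}\oplus}^{\oplus}=\frac{T^{\oplus}-1}{\exp(T_{\mathrm{not}\oplus})}$ and $T_{\mathrm{not}\oplus}^{\ominus}=\frac{T^{\oplus}}{\exp(T_{\mathrm{not}\oplus})}$, and that $T_{\mathrm{not}\oplus}^{\mathsf{blo}}=T_{\mathrm{not}\oplus}'=T'\exp(-T_{\mathrm{not}\oplus})=T^{\oplus}$. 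So first I would rewrite every factor in \cref{eqlongue} in terms of the two ``atoms'' $T^{\oplus}$ and $\exp(T_{\mathrm{not}\oplus})$ (together with $T^{\mathrm{root}}$ and the product over $\Vs$, which are left untouched except that $T^{\mathsf{blo}}$ is rewritten where it occurs).

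Next I would count the total power of $T^{\oplus}$. It receives contributions $+1$ from each of $d_{=}$, $d_{\neq}$, $d_{\Vs\to\overline{\Vs}}$, $d_{\overline{\Vs}\to\Vs}$ (via $T_{\mathrm{not}\oplus}^{\mathsf{blo}}=T^{\oplus}$), $d_{\overline{\Vs}\to\ell}$ (via $zT_{\mathrm{not}\oplus}'=zT^{\oplus}$), $d_{\Vs\to\Vs}$ and $d_{\Vs\to\ell}$ (via $T^{\mathsf{blo}}=T'=T^{\oplus}\exp(T_{\mathrm{not}\oplus})$). The key combinatorial identity is that, since every internal node of $\tau$ except the root has exactly one parent edge, the sum of all these edge-counts equals (number of internal nodes) $+$ (number of leaves) $-1 = $ (number of edges of $\tau$), once one also accounts for the edges counted by $T^{\mathrm{root}}$ and the ones absorbed into the product over $\Vs$. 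Concretely: every edge of $\tau$ is of exactly one of the listed types (parent linear / child linear with same or different decoration; parent in $\Vs$ / not, child in $\Vs$ / not / a leaf), plus the root contributes one extra blossom factor, so $d_{=}+d_{\neq}+d_{\overline{\Vs}\to\Vs}+d_{\overline{\Vs}\to\ell}+d_{\Vs\to\overline{\Vs}}+d_{\Vs\to\Vs}+d_{\Vs\to\ell}$ counts all edges whose lower endpoint is not the root, i.e. all edges, hence equals $e$. This is exactly the exponent of $T^{\oplus}$ claimed. The hyperbolic care needed here — checking that the convention $T^{\mathrm{root}}=T^{\oplus}$ or $T^{\mathsf{blo}}$ is consistent with not double-counting the root's parent edge — is the main bookkeeping obstacle.

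Then I would collect the power of $\exp(T_{\mathrm{not}\oplus})$. Negative contributions $-1$ come from each factor $T_{\mathrm{not}\oplus}^{\oplus}$ (there are $d_{=}$ of them) and each $T_{\mathrm{not}\oplus}^{\ominus}$ (there are $d_{\neq}$), since both have a $\exp(-T_{\mathrm{not}\oplus})$ in the denominator. Positive contributions come from $\exp(n_L T_{\mathrm{not}\oplus})$, giving $+n_L$, and from each $T^{\mathsf{blo}}=T'=T^{\oplus}\exp(T_{\mathrm{not}\oplus})$, giving $+1$ for each of the $d_{\Vs\to\Vs}$ and $d_{\Vs\to\ell}$ such factors; the factors $T_{\mathrm{not}\oplus}^{\mathsf{blo}}=T^{\oplus}$ contribute $0$, and $zT_{\mathrm{not}\oplus}'=zT^{\oplus}$ also contributes $0$. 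If $T^{\mathrm{root}}=T^{\mathsf{blo}}=T'$ it adds one more $\exp(T_{\mathrm{not}\oplus})$, but this is exactly matched by the bookkeeping of whether the root lies in $\Vs$ (so whether it should be thought of as a ``$\Vs$-to-parent'' edge) — again a convention check. Summing, the total exponent of $\exp(T_{\mathrm{not}\oplus})$ is $n_L + d_{\Vs\to\Vs}+d_{\Vs\to\ell} - d_{=} - d_{\neq}$, which is the claimed $(d_{\Vs\to\ell}+d_{\Vs\to\Vs}+n_L-d_{=}-d_{\neq})$. Finally, the prefactor $z^{|\tau|}$ and the product $\prod_{\mathfrak{n}\in\Vs}\mathrm{Occ}_{\mathrm{dec}(\mathfrak{n}),\mathcal{P}}(T)$ carry over unchanged, completing the identification. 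The only genuinely delicate point, worth spelling out carefully in the final write-up, is the root convention: one must verify that in both cases ($r_0\in\Vs$ and $r_0\notin\Vs$) the substitution $T^{\mathsf{blo}}=T'=T^{\oplus}\exp(T_{\mathrm{not}\oplus})$ versus $T^{\mathrm{root}}=T^{\oplus}$ produces precisely the stated exponents, with no leftover factor.
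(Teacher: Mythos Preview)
Your approach is exactly the paper's: the paper's proof is the single sentence ``Simple consequence of \cref{ecu123,ecu124,b1,b2,b3,o1,o2} and \Cref{thmcoeur}'', and you have spelled out precisely that substitution-and-collection argument. Two small remarks: first, your worry about the root convention is unnecessary, since in the corollary $T^{\mathrm{root}}$ is kept as a symbol (it is \emph{not} rewritten as $T^{\oplus}\exp(T_{\mathrm{not}\oplus})$ when the root lies in $\Vs$), so no extra bookkeeping is needed there; second, note that $T_{\mathrm{not}\oplus}^{\oplus}=(T^{\oplus}-1)\exp(-T_{\mathrm{not}\oplus})$ contributes a factor $T^{\oplus}-1$ rather than $T^{\oplus}$, so the literal substitution yields $(T^{\oplus}-1)^{d_{=}}(T^{\oplus})^{e-d_{=}}$ --- this is a feature of the statement as written and is harmless for the subsequent asymptotics in \Cref{corrr}, since $T^{\oplus}\to\infty$ at $R$.
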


\begin{proof}
Simple consequence of \cref{ecu123,ecu124,b1,b2,b3,o1,o2} and \Cref{thmcoeur}.
\end{proof}

\begin{cor}\label{corrr}
Under Condition (C) defined p.\pageref{condi}, the series $T_{\tau,\Vs}$ has radius of convergence $R$, is $\Delta$-analytic and its asymptotic expansion near $R$ is: 
$$T_{\tau,\Vs}=\frac{(1+K)}{\mu}C_{\tau,\Vs}\left(1-\frac{z}{R}\right)^{-\frac{e+1}{2}}\left(1+o(1)\right)$$
where
$$C_{\tau,\Vs}:= \frac{(1+K)^{f}R^{|\tau|}}{\mu^{e}}\times \prod\limits_{\mathfrak{n}\in \Vs}\mathrm{Occ}_{\mathrm{dec}(\mathfrak{n}),\mathcal{P}}(K)$$
where $e$ is the number of edges of $\tau$, $f$ the number of edges between two nodes $(n,n')$ such that $n'$ is a child of $n$ and $n$ is in $\Vs$ and $\mu$ and $K$ are defined by \cref{mu,K}.

\end{cor}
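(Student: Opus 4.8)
The plan is to start from the closed-form expression for $T_{\tau,\Vs}$ provided by \Cref{cort}, namely
\begin{align*}
T_{\tau,\Vs}=z^{|\tau|}\,T^{\mathrm{root}}\left(T^{\oplus}\right)^{e}\exp\!\big((d_{\Vs\to \ell}+d_{\Vs \to \Vs}+n_L-d_{=}-d_{\neq}) T_{\mathrm{not}\oplus}\big)\prod_{\mathfrak{n}\in \Vs}\mathrm{Occ}_{\mathrm{dec}(\mathfrak{n}),\mathcal{P}}(T),
\end{align*}
and to read off the dominant singularity by substituting, factor by factor, the singular expansions at $R$ established earlier. The key inputs are: \Cref{thm5}, which gives $T^{\oplus}=\tfrac1\mu(1-z/R)^{-1/2}+O(1)$, $T(z)=K+O\big((1-z/R)^{1/2}\big)$ and $T_{\mathrm{not}\oplus}(z)=\kappa+O\big((1-z/R)^{1/2}\big)$, together with $\Delta$-analyticity of $T$, $T_{\mathrm{not}\oplus}$ and $T^{\oplus}$; the relations \cref{o1,o2,b1,b2,b3} expressing $T^{\mathrm{blo}}=T'$, $T^{\oplus}=T'\exp(-T_{\mathrm{not}\oplus})$ etc., which via singular differentiation (\cite[Theorem VI.8]{flajolet2009analytic}) give $T'=T^{\mathrm{blo}}$, $T'_{\mathrm{not}\oplus}=T^{\mathrm{blo}}_{\mathrm{not}\oplus}$ each of order $(1-z/R)^{-1/2}$; and the fact (proved just before \Cref{thmcoeur}'s section) that each $\mathrm{Occ}_{\mathrm{dec}(\mathfrak{n}),\mathcal{P}}$ has radius of convergence strictly larger than $R$, hence is analytic at $R$ and contributes the finite nonzero constant $\mathrm{Occ}_{\mathrm{dec}(\mathfrak{n}),\mathcal{P}}(K)$ (using $T(R^-)=K$).

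The first step is bookkeeping of the exponent of $(1-z/R)^{-1/2}$. The factor $z^{|\tau|}$ contributes $R^{|\tau|}$ and no singularity. The factor $T^{\mathrm{root}}$ is either $T^{\oplus}$ or $T^{\mathsf{blo}}=T'$; in both cases it is of order $(1-z/R)^{-1/2}$ with leading constant $1/\mu$ (for $T^{\oplus}$ directly, for $T^{\mathsf{blo}}=T'$ by singular differentiation of \cref{ecut}, which also gives leading constant $(1+K)\cdot\tfrac1{\mu}\cdot\tfrac1R\cdot R = (1+K)/\mu$ — the precise constant must be tracked carefully here). The factor $(T^{\oplus})^e$ contributes $(1-z/R)^{-e/2}$ with constant $\mu^{-e}$. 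The exponential factor $\exp(c\,T_{\mathrm{not}\oplus})$ is analytic at $R$ and contributes $\exp(c\kappa)$, a positive constant; similarly the $\mathrm{Occ}$ product contributes $\prod_{\mathfrak{n}\in\Vs}\mathrm{Occ}_{\mathrm{dec}(\mathfrak{n}),\mathcal{P}}(K)$. Multiplying, the singular exponent is $-\tfrac12(e+1)$ (one half from $T^{\mathrm{root}}$, $e/2$ from $(T^{\oplus})^e$), which matches the claim; and since each factor is $\Delta$-analytic (products and compositions of $\Delta$-analytic functions with analytic-at-$R$ functions remain $\Delta$-analytic, and $\mathrm{Occ}_{\mathrm{dec}(\mathfrak{n}),\mathcal{P}}(T)$ is $\Delta$-analytic as a composition), so is $T_{\tau,\Vs}$, with dominant singularity exactly at $R$ because none of the factors degenerates.

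The remaining — and genuinely fiddly — step is to verify that the collected constant equals the stated $\tfrac{(1+K)}{\mu}C_{\tau,\Vs}$ with $C_{\tau,\Vs}=\tfrac{(1+K)^{f}R^{|\tau|}}{\mu^{e}}\prod_{\mathfrak{n}\in\Vs}\mathrm{Occ}_{\mathrm{dec}(\mathfrak{n}),\mathcal{P}}(K)$. This requires two sub-checks. First, the combinatorial identity $d_{\Vs\to\ell}+d_{\Vs\to\Vs}+n_L-d_{=}-d_{\neq}=0$, so that the troublesome $\exp(\,\cdot\,T_{\mathrm{not}\oplus})$ factor is actually trivial ($\exp(0)=1$); this is a pure edge-counting argument on $\tau$ (each edge of $\tau$ is incident to a unique parent node, classify parents by membership in $\Vs$ and child type, and note $n_L$ equals the number of non-$\Vs$ parents, which by the child-sum telescopes against $d_{=}+d_{\neq}+d_{\overline\Vs\to\Vs}+d_{\overline\Vs\to\ell}$; one must be careful that the exponent in \Cref{cort} is precisely the combination that cancels — indeed re-deriving \Cref{cort} via \cref{ecu123} shows the $\exp(T_{\mathrm{not}\oplus})$ powers are arranged so the net power is $0$). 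Second, one must confront the factors $(1+K)$. The leading constant of $T^{\mathrm{blo}}=T'$ carries an extra $(1+K)$ relative to that of $T^{\oplus}$, coming from \cref{ecut} vs.\ \cref{toplus}; each edge counted by $f$ is an edge from a node in $\Vs$ to a child, and in the \Cref{thmcoeur} decomposition such an edge is glued through a $T^{\mathsf{blo}}$ or $T'$ factor rather than a $T^{\oplus}$ factor — so after rewriting \Cref{cort}'s $T^{\mathrm{root}}(T^{\oplus})^e$ in terms of the ``true'' gluing functions one picks up exactly $(1+K)^{f}$, plus one more $(1+K)$ from the root when $\mathrm{root}\in\Vs$; reconciling the two presentations is where essentially all the work lies. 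I expect this constant-matching — in particular getting the power of $(1+K)$ right and confirming the exponential factor is trivial — to be the main obstacle; the order-of-singularity part is routine singularity analysis once \Cref{cort} is in hand.
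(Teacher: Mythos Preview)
Your overall strategy --- start from \Cref{cort}, plug in the singular expansions of $T^{\oplus}$, $T_{\mathrm{not}\oplus}$, $T$ from \Cref{thm5}, and read off the leading term --- is exactly what the paper does, and the $\Delta$-analyticity and the singular exponent $-(e+1)/2$ fall out as you describe. However, the constant-matching part contains a genuine error.

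Your claim that $d_{\Vs\to\ell}+d_{\Vs\to\Vs}+n_L-d_{=}-d_{\neq}=0$ is false. The correct count maps each internal node $\mathfrak{n}'\notin\Vs$ to the edge joining $\mathfrak{n}'$ to its \emph{parent} (not to its children); this gives $n_L = d_{=}+d_{\neq}+d_{\Vs\to\overline\Vs}+\mathbf{1}_{\mathrm{root}\notin\Vs}$, and hence the exponent on $T_{\mathrm{not}\oplus}$ in \Cref{cort} equals $d_{\Vs\to\ell}+d_{\Vs\to\Vs}+d_{\Vs\to\overline\Vs}+\mathbf{1}_{\mathrm{root}\notin\Vs}=f+\mathbf{1}_{\mathrm{root}\notin\Vs}$, which is \emph{not} zero. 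The exponential factor is therefore not trivial: at the singularity it contributes $\exp\big((f+\mathbf{1}_{\mathrm{root}\notin\Vs})\kappa\big)=(1+K)^{\,f+\mathbf{1}_{\mathrm{root}\notin\Vs}}$, since $\exp(\kappa)=1+K$. This is precisely where the factor $(1+K)^{f}$ in $C_{\tau,\Vs}$ comes from. Your alternative explanation --- that $(1+K)^f$ arises from hidden $T^{\mathrm{blo}}$ or $T'$ factors on the $f$ edges out of $\Vs$ --- does not apply to \Cref{cort}, where all edge factors have already been uniformised to $(T^{\oplus})^e$. The remaining case split on the root is then absorbed by the leading constant of $T^{\mathrm{root}}$: $T^{\oplus}$ contributes $1/\mu$ while $T^{\mathrm{blo}}=T'$ contributes $(1+K)/\mu$, so the extra $(1+K)$ from the exponential when $\mathrm{root}\notin\Vs$ exactly compensates the missing $(1+K)$ in $T^{\mathrm{root}}$, and both cases yield $\tfrac{(1+K)}{\mu}C_{\tau,\Vs}$.
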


\begin{proof}
Note that by counting the edges $(\mathfrak{n},\mathfrak{n}')$, where $\mathfrak{n}'$ is a child of $\mathfrak{n}$ and $\mathfrak{n}'$ is not in \Vsr $n_L=d_{=}+d_{\neq}+d_{\Vs \to \overline{\Vs}}$ if the root is in \Vsr, $n_L=d_{=}+d_{\neq}+d_{\Vs \to \overline{\Vs}}+1$ otherwise. Using this, the corollary is a simple consequence of \cref{cort} and \cref{toplus}.
\end{proof}

Summing over all choices of \Vsr gives the following corollary

\begin{cor}\label{corrrr}
Under Condition (C) defined p.\pageref{condi}, the series $T_{\tau}$ has radius of convergence $R$, is $\Delta$-analytic and its asymptotic expansion near $R$ is: 
$$T_{\tau}=\frac{(1+K)}{\mu}B_{\tau}\left(1-\frac{z}{R}\right)^{-\frac{e+1}{2}}\left(1+o(1)\right)$$
where
$$B_{\tau}:= \frac{R^{|\tau|}}{\mu^{e}}\times \prod\limits_{\mathfrak{n}\in \Vs_{\mathrm{nl}}}\mathrm{Occ}_{\mathrm{dec}(\mathfrak{n}),\mathcal{P}}(K)\prod\limits_{\mathfrak{n}\in \Vs_{\mathrm{l}}}\left(\mathrm{Occ}_{\mathrm{dec}(\mathfrak{n}),\mathcal{P}}(K)(1+K)^{d_{\mathfrak{n}}}+1\right)$$

where $e$ is the number of edges of $\tau$, $d_{\mathfrak{n}}$ the number of children of $\mathfrak{n}$ for every internal node $v$, $\Vs_{\mathrm{nl}}$ is the set of non-linear nodes of $\tau$ and $\Vs_{\mathrm{l}}$ is the set of linear nodes of $\tau$.

\end{cor}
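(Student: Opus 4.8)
The plan is to obtain \Cref{corrrr} by summing the asymptotic estimate of \Cref{corrr} over all admissible choices of the set \Vsr. Recall from \Cref{thmcoeur} that the only constraint on \Vsr is that it contains every non-linear node of $\tau$; thus a choice of \Vsr amounts to a choice of an arbitrary subset $S \subseteq \Vs_{\mathrm{l}}$ of the linear nodes (together with the forced non-linear ones), so that $\Vs = \Vs_{\mathrm{nl}} \sqcup S$. Since $\mathcal{T}_{\tau}$ is the disjoint union of the $\mathcal{T}_{\tau,\Vs}$ over these choices, we have $T_{\tau} = \sum_{S \subseteq \Vs_{\mathrm{l}}} T_{\tau, \Vs_{\mathrm{nl}} \sqcup S}$, a finite sum. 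By \Cref{corrr}, each summand is $\Delta$-analytic with the \emph{same} singular exponent $-(e+1)/2$ at the \emph{same} singularity $R$ (the number of edges $e$ of $\tau$ does not depend on \Vsr), so the sum is $\Delta$-analytic with radius of convergence $R$ and the leading constants simply add up. Hence $B_{\tau} = \sum_{S \subseteq \Vs_{\mathrm{l}}} \frac{\mu}{1+K} C_{\tau, \Vs_{\mathrm{nl}} \sqcup S}$, and it remains to evaluate this combinatorial sum.

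The main work is therefore the algebraic simplification: plugging in the formula for $C_{\tau,\Vs}$ from \Cref{corrr}, I would write
\begin{align*}
\frac{\mu}{1+K}\,C_{\tau,\Vs} = \frac{(1+K)^{f-1}R^{|\tau|}}{\mu^{e-1}}\prod_{\mathfrak{n}\in \Vs}\mathrm{Occ}_{\mathrm{dec}(\mathfrak{n}),\mathcal{P}}(K),
\end{align*}
and then analyze how $f$ (the number of edges from a node of \Vsr down to one of its children) and the product over $\mathfrak{n}\in\Vs$ depend on the choice $S$. The key observation is that everything factorizes over the linear nodes of $\tau$: the non-linear nodes of $\tau$ are always in \Vsr and contribute the fixed factor $\prod_{\mathfrak{n}\in\Vs_{\mathrm{nl}}}\mathrm{Occ}_{\mathrm{dec}(\mathfrak{n}),\mathcal{P}}(K)$, while each linear node $\mathfrak{n}$ of $\tau$ independently either belongs to $S$ — contributing $\mathrm{Occ}_{\mathrm{dec}(\mathfrak{n}),\mathcal{P}}(K)$ to the product and, through $f$, an extra factor $(1+K)^{d_{\mathfrak{n}}}$ coming from its $d_{\mathfrak{n}}$ downward edges — or does not belong to $S$, contributing a factor $1$. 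One must check carefully that the exponent $f$ really decomposes as $\sum_{\mathfrak{n}\in\Vs} d_{\mathfrak{n}}$ minus a correction that is independent of $S$ and gets absorbed into the $R^{|\tau|}/\mu^{e}$-type prefactor; this is a matter of counting edges in $\tau$ and is where I expect the bookkeeping to be most delicate.

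Granting that factorization, the sum over $S \subseteq \Vs_{\mathrm{l}}$ turns into a product of binomial-type choices at each linear node:
\begin{align*}
\sum_{S\subseteq \Vs_{\mathrm{l}}}\ \prod_{\mathfrak{n}\in S}\Big(\mathrm{Occ}_{\mathrm{dec}(\mathfrak{n}),\mathcal{P}}(K)\,(1+K)^{d_{\mathfrak{n}}}\Big) = \prod_{\mathfrak{n}\in\Vs_{\mathrm{l}}}\Big(\mathrm{Occ}_{\mathrm{dec}(\mathfrak{n}),\mathcal{P}}(K)\,(1+K)^{d_{\mathfrak{n}}}+1\Big),
\end{align*}
which, multiplied by the fixed prefactor $\frac{R^{|\tau|}}{\mu^{e}}\prod_{\mathfrak{n}\in\Vs_{\mathrm{nl}}}\mathrm{Occ}_{\mathrm{dec}(\mathfrak{n}),\mathcal{P}}(K)$, is exactly the claimed expression for $B_{\tau}$. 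The $\Delta$-analyticity and the location of the singularity are inherited from \Cref{corrr} for each (finitely many) summand, so no new analytic input is needed; the only genuinely new ingredient is the edge-counting identity that makes the $f$-exponent split cleanly per linear node, and that is the step I would be most careful about.
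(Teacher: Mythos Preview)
Your approach is exactly the paper's: the paper states this corollary with the one-line justification ``Summing over all choices of \Vsr gives the following corollary,'' and you have correctly spelled out that sum, identified the parametrization $\Vs=\Vs_{\mathrm{nl}}\sqcup S$ with $S\subseteq\Vs_{\mathrm{l}}$, and carried out the factorization over linear nodes. Two small bookkeeping points: (i) comparing the two asymptotic expansions directly gives $B_\tau=\sum_{\Vs} C_{\tau,\Vs}$, without the extra factor $\mu/(1+K)$ you inserted; (ii) there is no mysterious ``correction'' in the exponent, since by its definition $f=\sum_{\mathfrak{n}\in\Vs} d_{\mathfrak{n}}$ exactly, so $(1+K)^f=\prod_{\mathfrak{n}\in\Vs}(1+K)^{d_{\mathfrak{n}}}$ and the factorization over nodes is immediate.
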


\section{Main results}\label{lastsection}

\begin{thm}\label{110}

Let $\tau$ be a tree with $\ell\geq 2$ leaves. For $n\geq \ell$ and $\mathbf{T}^{(n)}$ be a uniform random tree in $\mathcal{T}_{\mathcal{P}}$ with $n$ vertices.
Let $\mathbf{\mathfrak{I}_{\ell}}^{(n)}$ be a uniform partial injection from $\{1,\dots,n\}$ to $\N$ whose image is $\{1,\dots, \ell\}$ and independent of $\mathbf{T}^{(n)}$. Denote by $\mathbf{T}_{\mathbf{\mathfrak{I}_{\ell}}^{(n)}}^{(n)}$ the subtree induced by $\mathbf{\mathfrak{I}_{\ell}}^{(n)}$. Assume that Condition (C) defined p.\pageref{condi} holds.

If $\tau$ is a binary tree then $$\mathbb{P}(\mathbf{T}_{\mathbf{\mathfrak{I}_{\ell}}^{(n)}}^{(n)}=\tau) \xrightarrow[n\to\infty]{} \frac{(\ell-1)!}{(2(\ell-1))!}2^{\ell-1}p^{\Vs_+}(1-p)^{\Vs_-}$$ 
where $\Vs_+$ (resp.~$\Vs_-$) is the number of internal nodes of $\tau$ decorated with $\oplus$ (resp.~$\ominus$) and
\begin{align}\label{parameterp}
   p&=\frac{1+(1+K)^2\mathrm{Occ}_{\oplus_2,\mathcal{P}}(K)}{\Lambda''(\kappa)} 
\end{align}
\noindent where $\oplus_2$ (resp.~$\ominus_2$) is the graph with $2$ vertices and $1$ (resp.~$0$) edge.

If on the contrary $\tau$ is not binary then $$\mathbb{P}(\mathbf{T}_{\mathbf{\mathfrak{I}_{\ell}}^{(n)}}^{(n)}=\tau) \xrightarrow[n\to\infty]{} 0$$

\end{thm}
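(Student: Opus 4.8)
The plan is to compute $\mathbb{P}(\mathbf{T}_{\mathbf{\mathfrak{I}_\ell}^{(n)}}^{(n)}=\tau)$ as a ratio of generating-function coefficients, and then extract the asymptotics using singularity analysis. By definition of the uniform model, if $f_n$ denotes the number of trees in $\mathcal{T}_{\mathcal{P}}$ with $n$ leaves and $t_{\tau,n}$ denotes the number of pairs $(t,\mathfrak{I})$ with $t$ a tree on $n$ leaves, $\mathfrak{I}$ a partial injection with image $\{1,\dots,\ell\}$, and $t_{\mathfrak{I}}\cong\tau$, then since $\mathbf{\mathfrak{I}_\ell}^{(n)}$ is uniform among the $n!/(n-\ell)!$ such partial injections and independent of $\mathbf{T}^{(n)}$, we have
\begin{align*}
\mathbb{P}(\mathbf{T}_{\mathbf{\mathfrak{I}_\ell}^{(n)}}^{(n)}=\tau)=\frac{(n-\ell)!}{n!}\cdot\frac{t_{\tau,n}}{f_n}=\frac{[z^n]T_\tau}{[z^n]T},
\end{align*}
where the last equality uses that $T_\tau$ is the exponential generating function of the marked trees counted by total number of leaves (so $[z^n]T_\tau=t_{\tau,n}/n!$) and $[z^n]T=f_n/n!$. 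So everything reduces to comparing the singular behaviour of $T_\tau$ and $T$ at their common dominant singularity $R$.

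\textbf{Applying singularity analysis.} From \Cref{corrrr}, $T_\tau$ is $\Delta$-analytic with $T_\tau\sim\frac{(1+K)}{\mu}B_\tau(1-z/R)^{-(e+1)/2}$ near $R$, where $e$ is the number of edges of $\tau$. From \Cref{thm5}/\Cref{cor1}, $T\sim K-(1+K)\frac{2R}{\mu}(1-z/R)^{1/2}$, so its singular part is of order $(1-z/R)^{1/2}$, i.e. exponent $-3/2$ after the Transfer Theorem, giving $[z^n]T\sim C\,R^{-n}n^{-3/2}$. By the Transfer Theorem \cite[Corollary VI.1]{flajolet2009analytic} applied to $T_\tau$, $[z^n]T_\tau\sim\frac{(1+K)}{\mu}B_\tau\frac{R^{-n}n^{(e+1)/2-1}}{\Gamma((e+1)/2)}=\frac{(1+K)}{\mu}B_\tau\frac{R^{-n}n^{(e-1)/2}}{\Gamma((e+1)/2)}$. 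Taking the ratio, the factor $n^{(e-1)/2}/n^{-3/2}=n^{(e+2)/2}$ must be bounded for the probability to have a finite nonzero limit, which forces $e\leq\ell-1$; but any tree on $\ell$ leaves has at least $\ell-1$ edges (it has at least $\lceil(\ell-1)/?\rceil$ internal nodes, and in fact $\#\text{edges}=\#\text{internal nodes}+\ell-1\geq\ell-1$ with equality only if there are no internal nodes — impossible for $\ell\geq2$... more precisely $e\geq\ell-1$ always, with $e=\ell-1$ iff $\tau$ has exactly one internal node, and $e=2\ell-2$ iff $\tau$ is binary). Thus when $\tau$ is \emph{not} binary we should get $e>\ell-1$... wait, that is the opposite inequality — I must be more careful here, and this sign bookkeeping is exactly the delicate point.

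\textbf{Reconciling the exponents.} The resolution: I should first normalize correctly. $[z^n]T\asymp R^{-n}n^{-3/2}$ and $[z^n]T_\tau\asymp R^{-n}n^{(e-1)/2}$, so the ratio is $\asymp n^{(e-1)/2+3/2}=n^{(e+2)/2}$, which \emph{grows} unless... this cannot be a probability. The error is that $T_\tau$ must actually be the generating function where the size parameter is $n$ = total leaves and the \emph{marked} structure introduces extra factors; re-examining \Cref{thmcoeur}, the prefactor $z^{|\tau|}$ and the fact that marked leaves are among the $n$ leaves means $t_{\tau,n}$ counts trees of size $n$ with a marked substructure, and the correct comparison denominator is not $f_n$ but $f_n\cdot\binom{?}{?}$... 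The clean way: criterion (d) of \Cref{Portmanteau} together with \Cref{111} tells us what the limit \emph{must} be if it exists, so the real content is (a) showing $T_\tau$ has the claimed singular exponent — which is \Cref{corrrr} — and (b) a careful normalization showing $\mathbb{P}(\mathbf{T}_{\mathbf{\mathfrak{I}_\ell}^{(n)}}^{(n)}=\tau)=\frac{[z^n]T_\tau}{[z^n]T}$, then reading off: the ratio of singular exponents is $\frac{(e+1)/2}{-3/2}$ — no. Let me state it properly: by \Cref{corrr}/\Cref{corrrr} and the Transfer Theorem, $[z^n]T_\tau\sim c_\tau R^{-n}n^{(e-1)/2}$, and dividing by $[z^n]T\sim C R^{-n}n^{-3/2}$ gives a limit that is finite and nonzero precisely when $(e-1)/2=-3/2$, i.e. $e=-2$ — absurd — so in fact the normalization must carry an extra $n^{-(e+2)/2}$, i.e. the probability is $\binom{n}{\ell}^{-1}$-type weighted, and then the finite limit occurs iff $\tau$ achieves the \emph{maximal} number of edges, i.e. $e=2\ell-2$, i.e. $\tau$ is binary. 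When $\tau$ is binary, plug $e=2\ell-2$ and the explicit $B_\tau$ from \Cref{corrrr} (using $\mathrm{Occ}_{\oplus_2,\mathcal{P}}$ and the linear-node factors, noting each internal node of a binary tree has $d_{\mathfrak n}=2$) and simplify against $C=\frac{(1+K)R}{\mu\sqrt\pi}$ and $\Lambda''(\kappa)$ via $\mu^2=2R\Lambda''(\kappa)$ to obtain the stated formula with $p$ as in \eqref{parameterp}. When $\tau$ is not binary, $e<2\ell-2$ strictly, the exponent is strictly subcritical, and the probability tends to $0$.

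\textbf{Main obstacle.} The genuinely delicate step is not the singularity analysis — that is a routine application of \Cref{corrrr} and the Transfer Theorem — but rather pinning down the \emph{exact} combinatorial normalization relating $\mathbb{P}(\mathbf{T}_{\mathbf{\mathfrak{I}_\ell}^{(n)}}^{(n)}=\tau)$ to $[z^n]T_\tau/[z^n]T$, keeping precise track of the $\frac{n!}{(n-\ell)!}$ factor coming from the uniform choice of $\mathbf{\mathfrak{I}_\ell}^{(n)}$ versus the conventions in the definition of $T_\tau$ (size = total number of leaves including marked ones), and then, in the binary case, the algebraic simplification of the constant: one must expand $B_\tau$ for a binary $\tau$, separate the contributions of $\oplus$- and $\ominus$-decorated internal nodes, recognize the product structure $p^{\Vs_+}(1-p)^{\Vs_-}$, and verify the combinatorial prefactor $\frac{(\ell-1)!}{(2(\ell-1))!}2^{\ell-1}$ matches the count $(2\ell-3)!!$ of binary non-plane trees on $\ell$ leaves — so that, finally, \Cref{111} applies and \Cref{cgbintro} follows. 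The symmetry $\mathrm{Occ}_{\oplus_2,\mathcal P}=\mathrm{Occ}_{\ominus_2,\mathcal P}$ coming from stability of $\mathcal P$ under complementation within $\Lambda$ (via the involution) is what makes $p+(1-p)$ combine into the single quantity $\Lambda''(\kappa)=1+(1+K)^2(\mathrm{Occ}_{\oplus_2,\mathcal P}(K)+\mathrm{Occ}_{\ominus_2,\mathcal P}(K))$, and checking this identity is the last bookkeeping hurdle.
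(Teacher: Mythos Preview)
Your overall approach matches the paper's, but the proposal contains a concrete computational slip that derails the argument. You write
\[
\mathbb{P}(\mathbf{T}_{\mathbf{\mathfrak{I}_\ell}^{(n)}}^{(n)}=\tau)=\frac{(n-\ell)!}{n!}\cdot\frac{t_{\tau,n}}{f_n}=\frac{[z^n]T_\tau}{[z^n]T},
\]
but the second equality is false: since $t_{\tau,n}=n!\,[z^n]T_\tau$ and $f_n=n!\,[z^n]T$, the ratio $t_{\tau,n}/f_n$ already equals $[z^n]T_\tau/[z^n]T$, and the prefactor $(n-\ell)!/n!$ does \emph{not} cancel. The correct expression is
\[
\mathbb{P}(\mathbf{T}_{\mathbf{\mathfrak{I}_\ell}^{(n)}}^{(n)}=\tau)=\frac{[z^n]T_\tau}{n(n-1)\cdots(n-\ell+1)\,[z^n]T}\sim \frac{B_\tau\sqrt{\pi}}{R\,\Gamma\!\left(\frac{e+1}{2}\right)}\,n^{(e+2)/2-\ell}.
\]
This is exactly the missing $n^{-\ell}$ you spend a paragraph groping for; once it is in place, the exponent $(e+2)/2-\ell$ is nonpositive for every tree on $\ell$ leaves (since $e\le 2\ell-2$), with equality precisely when $\tau$ is binary. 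There is no contradiction to resolve and no need to appeal to \Cref{111} or \Cref{Portmanteau} to ``deduce what the limit must be''.

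Two further errors in your final paragraph. First, $\mathcal{P}$ is only assumed stable under relabeling, not under complementation, so there is no reason for $\mathrm{Occ}_{\oplus_2,\mathcal P}(K)=\mathrm{Occ}_{\ominus_2,\mathcal P}(K)$; indeed this would force $p=1/2$, contrary to the whole point of the theorem. What is true (by \cref{e1} with $k=2$) is that their \emph{sum} equals $P''(K)$. Second, the identity you need is $\Lambda''(\kappa)=2+(1+K)^2P''(K)$, not $1+(1+K)^2(\cdots)$: differentiate $\Lambda'(w)=(1+e^w-1)\bigl(P'(e^w-1)+1\bigr)-1$ and use $\Lambda'(\kappa)=1$. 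With these two corrections the verification that the two contributions sum to $1$ (hence that the formula for $p$ is a genuine probability) goes through, and the constant simplifies to $\frac{(\ell-1)!}{(2\ell-2)!}2^{\ell-1}p^{\Vs_+}(1-p)^{\Vs_-}$ exactly as stated.
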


\begin{proof}
Since $\mathbf{\mathfrak{I}_{\ell}}^{(n)}$ is independent of $\mathbf{T}^{(n)}$, $$\mathbb{P}(\mathbf{T}_{\mathbf{\mathfrak{I}_{\ell}}^{(n)}}^{(n)}=\tau)=\frac{n![z^n]T_{\tau}}{n(n-1)\dots (n-\ell+1)n![z^n]T}=\frac{[z^n]T_{\tau}}{n(n-1)\dots (n-\ell+1)[z^n]T}$$

By applying the Transfer Theorem \cite[Corollary VI.1 p.392]{flajolet2009analytic} to \cref{corrr}, we get 
$$[z^n]T_{\tau}\sim \frac{(1+K)}{\mu} B_{\tau} \frac{n^{\frac{e-1}{2}}}{\Gamma\left(\frac{e+1}{2}\right)R^n}$$

and by \cref{cor1} we obtain
$$n\times \cdots\times (n-\ell+1)[z^n]T\sim n^{\ell} \frac{(1+K)R}{\sqrt{\pi}\mu} \frac{1}{R^{n}n^{\frac{3}{2}}}.$$

Thus when $n$ goes to infinity 
$$\mathbb{P}(\mathbf{T}_{\mathbf{\mathfrak{I}_{\ell}}^{(n)}}^{(n)}=\tau) \sim \frac{B_{\tau}\sqrt{\pi}}{R\Gamma\left(\frac{e+1}{2}\right)}n^{\frac{e+2}{2}-\ell}.$$

Hence if $\tau$ is not a binary tree, $\mathbb{P}(\mathbf{T}_{\mathbf{\mathfrak{I}_{\ell}}^{(n)}}^{(n)}=\tau) \xrightarrow[n\to\infty]{} 0.$

Assume that $\tau$ is a binary tree. Since here $e=2\ell-2$, we get when $n$ goes to infinity \[\mathbb{P}(\mathbf{T}_{\mathbf{\mathfrak{I}_{\ell}}^{(n)}}^{(n)}=\tau)\to \frac{\sqrt{\pi}B_{\tau}}{R\Gamma\left(\frac{2\ell-1}{2}\right)}=\frac{(\ell-1)!}{(2(\ell-1))!}\frac{2^{2\ell-2}B_{\tau}}{R}\] 

Since a binary tree with $\ell$ leaves has $\ell-1$ internal nodes, we have:
\begin{align*}\frac{2^{2\ell-2}B_{\tau}}{R}&\frac{2^{2\ell-2}R^{\ell-1}}{(2R\Lambda''(\kappa))^{\ell-1}}\prod\limits_{\mathfrak{n}\in \Vs_{\mathrm{l}}}\left(\mathrm{Occ}_{\mathrm{dec}(\mathfrak{n}),\mathcal{P}}(K)(1+K)^{d_{\mathfrak{n}}}+1\right)\\
&=\frac{2^{\ell-1}}{(\Lambda''(\kappa))^{\ell-1}}\left(1+(1+K)^2\mathrm{Occ}_{\oplus_2,\mathcal{P}}(K)\right)^{\Vs_+}\left(1+(1+K)^2\mathrm{Occ}_{\ominus_2,\mathcal{P}}(K)\right)^{\Vs_-}\\
&=2^{\ell-1}\left(\frac{1+(1+K)^2\mathrm{Occ}_{\oplus_2,\mathcal{P}}(K)}{\Lambda''(\kappa)}\right)^{\Vs_+}\left(\frac{1+(1+K)^2\mathrm{Occ}_{\ominus_2,\mathcal{P}}(K)}{\Lambda''(\kappa)}\right)^{\Vs_-}\\
&=2^{\ell-1}p^{\Vs_+}(1-p)^{\Vs_-}.
\end{align*}

Last equality holds because

\begin{align*}\frac{1+(1+K)^2\mathrm{Occ}_{\oplus_2,\mathcal{P}}(K)}{\Lambda''(\kappa)}+\frac{1+(1+K)^2\mathrm{Occ}_{\ominus_2,\mathcal{P}}(K)}{\Lambda''(\kappa)}&=\frac{2+(1+K)^2\sum\limits_{G\in\mathcal{P}}\frac{|G|(|G|-1)z^{|G|-2}}{|G|!}}{\Lambda''(\kappa)}\\
&=\frac{2+(1+K)^2P''(K)}{\Lambda''(\kappa)}=1
\end{align*}

since
\begin{align*}\Lambda''(\kappa)&=\exp(\kappa)^2P''(\exp(\kappa)-1)+\exp(\kappa)+P'(\exp(\kappa)-1)+\exp(\kappa)\\
&=(1+K)^2P''(K)+1+\Lambda'(\kappa)=2+(1+K)^2P''(K).
\end{align*}

Combining all the previous equality gives the announced result.

\end{proof}

\Cref{cgbintro} is a corollary of \cref{110} and \cref{111}.

\begin{thm}\label{thmprime}
 Let $H$ be a graph. For $n\geq |H|$, let $\mathbf{G}^{(n)}$ be a uniform random graph in $\mathcal{G}_{\mathcal{P}}$ with $n$ vertices. Let $(d_1,\dots,d_k)$ be the degrees of the linear nodes of the modular decomposition tree of $H$.

Then under Condition (C) defined p.\pageref{condi} 

$$\mathbb{E}[\mathrm{Occ}_{H}(\mathbf{G}^{(n)})]\sim K_H n^{|H|-\beta(H)}$$
where $\beta(H)$ is defined in \cref{betaG} and with $$K_H=\frac{\sqrt{\pi}\prod\limits_{i=1}^k(2d_i-3)!!}{2^{|H|-1-\beta(H)}\Gamma\left(\frac{2|H|-1-2\beta(H)}{2}\right)}\left(\prod\limits_{\mathfrak{n}\in \Vs_{\mathrm{nl}}}\frac{\mathrm{Occ}_{\mathrm{dec}(\mathfrak{n}),\mathcal{P}}(K)R^{|\mathrm{dec}(\mathfrak{n})|-2}}{\Lambda''(\kappa)^{\mathrm{dec}(\mathfrak{n})/2}}\right)p^{d_{\oplus}-n_{\oplus}} (1-p)^{d_{\ominus}-n_{\ominus}}$$

\noindent where $d_1,\dots,d_k$ are the number of children of every linear node in the modular decomposition tree of $H$, $n_{\oplus}$ (resp.~$n_{\ominus}$) is the number of internal node decorated with $\oplus$ (resp.~$\ominus$) in the modular decomposition tree of $H$, and $d_{\oplus}$ (resp.~$d_{\ominus}$) is the sum of the number of children of internal nodes decorated with $\oplus$ (resp.~$\ominus$) in the modular decomposition tree of $H$.
\end{thm}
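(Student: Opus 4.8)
The plan is to compute $\mathbb{E}[\mathrm{Occ}_H(\mathbf{G}^{(n)})]$ by passing from graphs to their modular decomposition trees and then summing the tree-level estimates of \Cref{corrr} over all relevant configurations. Recall that by the one-to-one correspondence between $\mathcal{G}_{\mathcal{P}}$ and $\mathcal{T}_{\mathcal{P}}$, and by \Cref{çasevoit}, an induced copy of $H$ in $\mathbf{G}^{(n)}$ corresponds to a marked tree $(t,\mathfrak{I})$ with $t\in\mathcal{T}_{\mathcal{P}}$ and $\mathrm{Graph}(t_{\mathfrak{I}})\sim H$, i.e.\ $t_{\mathfrak{I}}$ is a substitution tree of a graph isomorphic to $H$. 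So first I would write
\[
\mathbb{E}[\mathrm{Occ}_H(\mathbf{G}^{(n)})]=\frac{1}{\#\mathcal{G}_{\mathcal{P},n}}\sum_{\tau}\frac{|H|!}{|\mathrm{Aut}(H)|}\,[z^n]T_{\tau},
\]
where the sum is over substitution trees $\tau$ with $\mathrm{Graph}(\tau)\sim H$, weighted so as to count each labeled induced copy exactly once (I will need to be careful about the automorphism/relabeling bookkeeping — the factor $|H|!/|\mathrm{Aut}(H)|$ times the number of nonequivalent $\tau$'s, or equivalently summing over labeled $\tau$ and dividing appropriately). Because $\beta(\tau)$ only decreases under inflation and the exponent in \Cref{corrr} is $-(e+1)/2$ with $e=2n-2-2\beta$, the dominant contribution comes precisely from the substitution trees $\tau$ of $H$ maximizing $e$, which by \Cref{exp} are exactly the expanded trees of $H$; all other substitution trees contribute a strictly lower power of $n$ and are negligible.

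Next I would invoke \Cref{exp}: there are $\prod_{i=1}^k(2d_i-3)!!$ expanded trees of $H$, each with $e=2|H|-2-2\beta(H)$ edges, where $d_1,\dots,d_k$ are the degrees of the linear nodes of the modular decomposition tree of $H$. For each such expanded tree $\tau$, \Cref{corrr} combined with the Transfer Theorem gives
\[
[z^n]T_{\tau}\sim\frac{(1+K)}{\mu}B_{\tau}\,\frac{n^{(e-1)/2}}{\Gamma\!\left(\tfrac{e+1}{2}\right)R^n},
\]
and dividing by the asymptotics of $\#\mathcal{G}_{\mathcal{P},n}\sim C\,n!/(R^n n^{3/2})$ from \Cref{cor1} and by the falling factorial $n(n-1)\cdots(n-|H|+1)\sim n^{|H|}$ produces the power $n^{|H|-\beta(H)}$ as claimed, with $e/2-1/2-3/2+|H|-(e-1)/2=|H|-\beta(H)$ after substituting $e=2|H|-2-2\beta(H)$. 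The constant $K_H$ is then assembled from $B_{\tau}$, summed over the $\prod(2d_i-3)!!$ expanded trees. Here I use that in an expanded tree every non-linear node is prime and every linear node is binary, so the product $\prod_{\mathfrak{n}\in\Vs_{\mathrm{nl}}}\mathrm{Occ}_{\mathrm{dec}(\mathfrak{n}),\mathcal{P}}(K)$ is the same for all expanded trees (the multiset of prime decorations is fixed by $H$), while the linear nodes contribute, via \Cref{111}'s computation inside the proof of \Cref{110}, a factor $p$ per $\oplus$-node and $1-p$ per $\ominus$-node — matching $p^{d_{\oplus}-n_{\oplus}}(1-p)^{d_{\ominus}-n_{\ominus}}$ once one counts that a binary linear node decorated $\oplus$ contributes an edge-count and uses $1+(1+K)^2\mathrm{Occ}_{\oplus_2,\mathcal{P}}(K)=p\Lambda''(\kappa)$ from \Cref{110}.

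The main obstacle, I expect, is the precise combinatorial bookkeeping in two places. First, translating ``number of labeled induced subgraphs isomorphic to $H$'' into a sum over substitution (and then expanded) trees requires carefully tracking automorphisms of $H$ versus automorphisms of its modular decomposition tree versus the relabelings implicit in $\mathrm{Occ}_{\mathrm{dec}(\mathfrak{n}),\mathcal{P}}$; getting the constant $K_H$ exactly right — in particular the powers of $R$, of $\Lambda''(\kappa)=2R\Lambda''(\kappa)/(2R)$ hidden in $\mu$, and of $2$ — demands reconciling $B_{\tau}=R^{|\tau|}\mu^{-e}\prod\cdots$ with the stated formula, using $\mu^2=2R\Lambda''(\kappa)$, $e=2|H|-2-2\beta(H)$, and $\sum_{\mathfrak{n}\in\Vs_{\mathrm{nl}}}(|\mathrm{dec}(\mathfrak{n})|-2)=2\beta(H)$. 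Second, one must justify rigorously that the non-expanded substitution trees really are asymptotically negligible: this follows because each contributes $O(n^{|H|-\beta(H)-1/2})$ or smaller (their edge count is strictly less), and there are only finitely many substitution trees of $H$ up to relabeling, so the error is uniformly controlled. Once these two points are handled, assembling $K_H$ and verifying the claimed closed form is a routine, if lengthy, computation.
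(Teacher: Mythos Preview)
Your approach is essentially the paper's: express the expectation as a ratio of coefficients, sum over substitution trees $\tau$ of $H$, observe via \Cref{corrrr} and the Transfer Theorem that only expanded trees contribute to leading order, count those with \Cref{exp}, and assemble the constant using $\mu^2=2R\Lambda''(\kappa)$ together with the identity $1+(1+K)^2\mathrm{Occ}_{\oplus_2,\mathcal{P}}(K)=p\,\Lambda''(\kappa)$ from \Cref{110}.

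Two remarks. First, your worry about automorphism bookkeeping is a red herring. The paper's formula is simply
\[
\mathbb{E}[\mathrm{Occ}_H(\mathbf{G}^{(n)})]=\frac{[z^n]\sum_{\tau:\,\mathrm{Graph}(\tau)=H}T_{\tau}}{[z^n]T},
\]
with \emph{exact} equality $\mathrm{Graph}(\tau)=H$ (both are labeled graphs) and no factor $|H|!/|\mathrm{Aut}(H)|$. This is because $\mathrm{Occ}_H(G)$ counts partial injections $\mathfrak{I}$ with $G_\mathfrak{I}=H$ exactly, and by \Cref{çasevoit} each such $\mathfrak{I}$ corresponds to a unique marked tree $(t,\mathfrak{I})$ with $t_\mathfrak{I}\in\{\tau:\mathrm{Graph}(\tau)=H\}$; conversely every such marked tree gives one occurrence. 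Your formulation with $\sim$ and an automorphism correction can be made to work but is unnecessarily delicate. Second, you cite \Cref{corrr} for the asymptotics of $T_\tau$ with constant $B_\tau$, but that is \Cref{corrrr} (the version already summed over all choices of $\Vs$); \Cref{corrr} gives $T_{\tau,\Vs}$ with constant $C_{\tau,\Vs}$. With these two clarifications your outline matches the paper's proof.
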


\begin{proof}
    Let $\mathcal{T}_H$ be the set of substitution tree $t$ such that $\mathrm{Graph}(t)=H$. Note that $$\mathbb{E}[\mathrm{Occ}_{H}(\mathbf{G}^{(n)})]=\frac{n![z^n]\sum\limits_{\tau\in\mathcal{T}_H}T_{\tau}}{n![z^n]T}$$

    Following the proof of \cref{110}, for every $\tau\in\mathcal{T}_H$, 
    $$\frac{[z^n]T_{\tau}}{[z^n]T} \sim \frac{B_{\tau}\sqrt{\pi}}{R\ \Gamma\left(\frac{e+1}{2}\right)}n^{\frac{e+2}{2}}.$$

    Using the previous asymptotic, it yields that in the expansion of $\mathbb{E}[\mathrm{Occ}_{H}(\mathbf{G}^{(n)})]$, only the terms corresponding to an expanded tree in $\mathcal{T}_H$. Thus by \cref{exp}
\begin{align*}
   \mathbb{E}[\mathrm{Occ}_{H}(\mathbf{G}^{(n)})]&\sim  \frac{R^{|H|}\sqrt{\pi}\prod\limits_{i=1}^k(2d_i-3)!!}{R\mu^{2|H|-2-2\beta(H)}\Gamma\left(\frac{2|H|-1-2\beta(H)}{2}\right)}\prod\limits_{\mathfrak{n}\in \Vs_{\mathrm{nl}}}\mathrm{Occ}_{\mathrm{dec}(\mathfrak{n}),\mathcal{P}}(K) \\
   &\quad \times \prod\limits_{\mathfrak{n}\in \Vs_{\mathrm{l}}}\left(\mathrm{Occ}_{\mathrm{dec}(\mathfrak{n}),\mathcal{P}}(K)(1+K)^{2}+1\right)n^{|H|-\beta(H)}
\end{align*}

    Using the explicit expression of $\mu$, and the fact that in an expanded tree, every linear node of size $k$ is replaced with a binary tree with $k-1$ binary node with the same decoration, we get 
    $$\mathbb{E}[\mathrm{Occ}_{H}(\mathbf{G}^{(n)})]\sim  K_Hn^{|H|-\beta(H)}$$
    with $$K_H=\frac{\sqrt{\pi}\prod\limits_{i=1}^k(2d_i-3)!!}{2^{|H|-1-\beta(H)}\Gamma\left(\frac{2|H|-1-2\beta(H)}{2}\right)}\left(\prod\limits_{\mathfrak{n}\in \Vs_{\mathrm{nl}}}\frac{\mathrm{Occ}_{\mathrm{dec}(\mathfrak{n}),\mathcal{P}}(K)R^{|\mathrm{dec}(\mathfrak{n})|-2}}{\Lambda''(\kappa)^{\mathrm{dec}(\mathfrak{n})/2}}\right)p^{d_{\oplus}-n_{\oplus}} (1-p)^{d_{\ominus}-n_{\ominus}}.$$
\end{proof}

\textbf{Acknowledgements.} I would like to thank Lucas Gerin and Frédérique Bassino for useful discussions and for carefully reading many earlier versions of this manuscript.

\bibliographystyle{plain}
\bibliography{biblio}

\vfill 
\noindent \textsc{Théo Lenoir} \verb|theo.lenoir@polytechnique.edu|\\
\textsc{Cmap, Cnrs}, \'Ecole polytechnique,\\
Institut Polytechnique de Paris,\\
91120 Palaiseau, France

\end{document}